\theoremstyle{plain}
\newtheorem{mainthm}{Theorem}
\newtheorem*{conj*}{Conjecture}
\newtheorem*{cor*}{Corollary}
\newtheorem{theorem}{Theorem}[section]
\newtheorem{proposition}[theorem]{Proposition}
\newtheorem{corollary}[theorem]{Corollary}
\newtheorem{lemma}[theorem]{Lemma}
\newtheorem{problem}{Problem}
\theoremstyle{definition}
\newtheorem*{def*}{Definition}
\newtheorem{remark}[theorem]{Remark}
\newtheorem{rmk}[theorem]{Remark}
\newtheorem{example}[theorem]{Example}
\newtheorem{definition}[theorem]{Definition}
\newcommand{\SE}{{\mathcal E}}
\newcommand{\SM}{{\mathcal M}}
\newcommand{\SU}{{\mathcal U}}
\renewcommand{\epsilon}{\varepsilon}
\newcommand{\Z}{\mathbb{Z}}
\newcommand{\N}{\mathbb{N}}
\newcommand{\R}{\mathbb{R}}
\newcommand{\eps}{\varepsilon}
\newcommand{\interior}{\operatorname{int}}
\newcommand{\su}{\operatorname{Supp}}
\newcommand{\diam}{\operatorname{diam}}		
\newcommand{\tpitchfork}{
  \vbox{
    \baselineskip\z@skip
    \lineskip-.52ex
    \lineskiplimit\maxdimen
    \m@th
    \ialign{##\crcr\hidewidth\smash{$-$}\hidewidth\crcr$\pitchfork$\crcr}
  }
}
\newcommand{\bbS}{\mathbb{S}}
\newcommand{\bbT}{\mathbb{T}}
\newcommand{\Homeo}{\operatorname{Homeo}}
\newcommand{\htop}{h_{\mathrm{top}}}
\newcommand{\Merg}{\mathcal{M}^{e}}
\DeclareMathOperator{\Sh}{Sh}
\DeclareMathOperator{\id}{id}
\newcommand{\dH}{d_{C^0}^H}
\newcommand{\dC}{d_{C^0}}
\newcommand{\Supp}{\operatorname{Supp}}
\newcommand{\Diff}{\operatorname{Diff}}
\newcommand{\Fix}{\operatorname{Fix}}
\newcommand{\supp}{\operatorname{supp}}
\newcommand{\bbN}{\mathbb{N}}
\title[Local Shadowing Beyond Global Shadowing]{Local Shadowing Beyond Global Shadowing: Entropy and Dense Manifold Realizations}
\author{Piotr Oprocha}
\address[P. Oprocha]{
National Supercomputing Centre IT4Innovations, University of Ostrava,
	IRAFM,
	30. dubna 22, 70103 Ostrava,
	Czech Republic}
\email{piotr.oprocha@osu.cz}
\author{Elias Rego}
\address[E. Rego]{
	AGH University of Krakow, Faculty of Applied Mathematics,
	al. Mickiewicza 30,
	30-059 Krak\'ow,
}
\email{rego@agh.edu.pl}
\keywords{Shadowing, shadowable points, chain recurrence, invariant measures, entropy, manifold homeomorphisms}
\subjclass[2020]{Primary: 37B65, 37B05; Secondary: 37B40.}
\begin{document}

\begin{abstract}
Shadowable points were developed to cover cases in which a local shadowing mechanism survives
without a global shadowing property. We show that on every compact manifold of dimension at least two, there is a
$C^0$-dense set $\mathcal{R}$ of homeomorphisms so that each $f\in \mathcal{R}$ has  a transitive chain component $D$ consisting of shadowable points, although $f$, $f|_D$, and every chain recurrent class meeting a neighborhood of $D$ fail shadowing. Thus ambient pointwise tracing is neither inherited from global shadowing nor explained by a shadowing subsystem. Furthermore, on general compact spaces, arbitrary Cantor dynamics can occur as the entire set of shadowable points. 
We also study shadowable points through the local dynamics of chain classes and derive, under additional hypotheses, semi-horseshoes,
entropy-bearing ergodic approximations of measures, and entropy flexibility.
\end{abstract}

\maketitle

\section{Introduction}

The shadowing property is a standard condition for tracing approximate
orbits by exact orbits.  It originated in hyperbolic dynamics and is
closely tied to stability of dynamics under perturbations, the reliability of numerical trajectories, and the qualitative description of systems through spectral decomposition and
symbolic dynamics, see
\cites{AH,Py,Pil2,Pal} for general accounts and
\cites{DGS,KMS,TCY,UG} for representative applications.  The classical definition of shadowing is
global: one tracing scale must work for pseudo-orbits starting everywhere
in the phase space.  Intuitively, failure of shadowing may be caused by
behavior far from a specified invariant region and does not determine
whether pointwise tracing holds in that region.
Shadowable points, introduced by Morales \cite{Mo}, isolate that mechanism.
A point $x$ is \textit{shadowable} when every sufficiently accurate two-sided
pseudo-orbit through $x$ is traced by a true orbit; we denote the set of
such points by $\Sh(f)$.  Thus $f$ has the shadowing property exactly when
$\Sh(f)$ is the whole phase space.  Morales also constructed homeomorphisms
without shadowing for which $\Sh(f)$ is dense \cite{Mo}. The pointwise
viewpoint was subsequently developed for flows \cite{AV}, quantitative
shadowability and local entropy \cites{Ka1,Ka2,Ka3}, pointwise stability
and persistence \cite{DLM}, periodic shadowability \cite{KLT}, and
measure-theoretic shadowing \cites{LMo,Shin} and has also been used to
obtain pointwise criteria for positive entropy \cite{AR}.  These results
develop the theory of the pointwise property, while leaving the following
geometric question: \textit{can a dynamically coherent invariant set consist of
shadowable points when neither the global map nor the restricted system
has the shadowing property?}

This question is especially pertinent on manifolds. The shadowing property is
$C^0$-generic in standard spaces of manifold homeomorphisms, and related
periodic tracing properties are generic in several corresponding spaces
\cites{PP,PK,PKMM,GL}. Consequently, examples inherited from maps with the 
shadowing property do not test whether the pointwise property has an
independent role.  Theorem~\ref{thm:shadowable-dense} answers the question
above for homeomorphisms without the shadowing property.
Since shadowing is $C^0$-generic, the homeomorphisms we produce necessarily form a dense but topologically meager family; the separation between local and global shadowing is thus exceptional, and yet it is unavoidably present throughout $\Homeo(M)$.

The paper is organized around four related issues: (i) whether local
pointwise shadowing can persist when both the global and the intrinsic shadowing property
fail; (ii) whether this separation can be realized densely on manifolds; (iii)
which tracing properties propagate through a chain recurrent class; (iv) which
symbolic, measure-theoretic, and entropy consequences follow from local
tracing. The main theorem addresses the first two.

\begin{mainthm}\label{mainA}\label{thm:shadowable-dense}
Let $M$ be a compact manifold of dimension at least two. There is a
$C^0$-dense set of homeomorphisms $\mathcal{R}\subset\Homeo(M)$ such that if $f\in \mathcal{R}$, then:
\begin{enumerate}
\item The global system $(M,f)$ does not have the shadowing property;
\item $f$ has a transitive chain-recurrent component $D$ with
      $D\subseteq \Sh(f)$, while $(D,f|_D)$ does not have the shadowing
      property;
\item There is a neighborhood $V$ of $D$ meeting only chain-recurrent classes whose
      restricted dynamics fails to have shadowing.
\end{enumerate}
Moreover, the inserted local block has finite topological entropy and, in dimension at least three, it can obtained as a perturbation of Axiom~A (hence hyperbolic) building block, with the distinguished component $D$ of positive finite topological entropy.
\end{mainthm}

The above theorem separates
three properties that coincide in many standard examples but are
logically different: pointwise shadowability in the ambient space,
global shadowing for pseudo-orbits in the system, and intrinsic shadowing of pseudo-orbits in
the restricted system.  In the theorem, every point of $D$ traces all
ambient pseudo-orbits through it, including pseudo-orbits that leave $D$.
Nevertheless, pseudo-orbits contained in $D$ cannot in general be traced
by orbits of $f|_D$.  Neither can the phenomenon be attributed to shadowing on
surrounding chain-recurrent classes.  Thus local shadowing
occurs independently of these forms of shadowing, and the asserted
homeomorphisms form a dense subset of $\Homeo(M)$ for every compact
manifold $M$ of dimension at least two.

The proof uses a controlled realization criterion
(Theorem~\ref{thm:abstract}). Starting
from a chain component accumulated by invariant sets with shadowing, a
Denjoy--Rees modification \cite{BCLR}  (a technique going back to Rees' minimal positive-entropy homeomorphisms of the torus \cite{Rees}) inserts measure-theoretic dynamics
while quantitative control of the approximating conjugacies preserves
tracing through the distinguished component.  Entropy-spectrum gaps,
using the global approximation theory of \cite{LiOp}, then rule out
shadowing both on that component and on the ambient system.  Plykin-type
blocks provide the finite-entropy seed in dimensions at least three; a
separate annular construction handles dimension two.  Controlled
surgery then inserts these local blocks into an arbitrary homeomorphism, producing the dense family described in the statement above.

Before the manifold construction we establish two general facts that
clarify the structure of points carrying local shadowing. First, local shadowing propagates through
chain recurrence.  If
\[
        x\in Sh^+(f)\cap CR(f),
\]
then every point of its chain class $H(x)$ is positively shadowable, with
uniform constants, and $H(x)$ has the two-sided shadowing property as a
subset of the ambient space (Theorems~\ref{ShadClass}
and~\ref{shadonH}).  If the class is locally maximal, the tracing orbit can
be forced to remain in the class, and intrinsic shadowing follows
(Corollary~\ref{cor:loc_max}).  Consequently, local maximality of chain-recurrent class provides a concrete
mechanism by which pointwise tracing upgrades to shadowing of a
restricted system and our examples show what can happen without it.

Here $\Sh^+(f)$ denotes the weaker forward-time version, in which only
positive pseudo-orbits through the point are considered. The classical
two-sided set $\Sh(f)$ is the principal object of the manifold results.
We use $\Sh^+(f)$ only when a proof uses no information about
the past, and state the corresponding theorem at that weaker level, therefore applicable in more generality.
This
keeps every result applicable to $\Sh(f)$ while also revealing which
arguments extend to noninvertible maps, where forward shadowability is the
natural formulation.

The manifold theorem has a complementary realization counterpart on
general compact metric spaces. Given any homeomorphism $T$ of the Cantor
set, Theorem~\ref{extthm} constructs a compact system $(\mathcal M,f)$
containing a copy $X$ of that Cantor system such that
\[
             Sh(f)=Sh^+(f)=X.
\]
Every compact invariant subsystem of $f$ with shadowing is contained in
$X$, as is every Toeplitz subsystem. Thus the theorem prescribes the
entire set of classical shadowable points, whereas Theorem~\ref{mainA} realizes a
dynamically coherent family of such points densely on manifolds. The two
results form the complementary abstract and geometric sides of Part~I.
The realization on the Cantor set should also be viewed against the
backdrop of zero-dimensional dynamics, where the generic homeomorphism of
the Cantor space has the shadowing property \cite{BeDa}. Theorem~\ref{extthm}
shows that, nevertheless, every Cantor system whatsoever---including systems
as far from shadowing as one wishes---arises as the full set of shadowable
points of a compact system.

Part~II of the paper studies dynamical consequences of local
tracing.  A~coding lemma shows that sufficiently separated periodic
pseudo-orbits through one positively shadowable point generate a
semi-horseshoe.  Chain recurrence supplies the required returns, while
nonminimality or non-equicontinuity supplies their separation. This
mechanism leads to approximation and entropy results that localize the
classical approximation theory developed for systems with shadowing and specification
\cites{Sig,Denker,MoOp,LiOp}.

\begin{mainthm}\label{mainB}
Let $f\colon M\to M$ be a homeomorphism and suppose that $f$ has a
non-periodic, positively shadowable, chain-recurrent point $x$.  If
$f|_{H(x)}$ is not equicontinuous, then for every invariant probability
measure $\mu$ with $\operatorname{supp}(\mu)\subset H(x)$ and every
$\eps>0$, there is an ergodic measure $\nu$ with positive entropy such that
$d^*(\nu,\mu)<\eps$.
\end{mainthm}

Here $d^*$ is any metric compatible with the weak${}^*$ topology and $H(x)$ is the chain recurrent class containing $x$.  The
approximating measures are supported on semi-horseshoes. In particular,
measures on symbolic subsystems occur arbitrarily close to the prescribed measure in
the weak${}^*$ topology.  When
the initial measure has positive entropy, the construction can be tuned to
a prescribed entropy level.

\begin{mainthm}\label{mainC}
Let $f\colon M\to M$ be a homeomorphism and
$x\in Sh^+(f)\cap CR(f)$. Suppose that
$\mu$ is invariant and $\operatorname{supp}(\mu)\subset H(x)$.  For every
$c\in[0,h_\mu(f))$ and $\eps>0$, there is an ergodic measure $\mu_c$ such
that
\[
        d^*(\mu_c,\mu)<\eps
        \quad\text{and}\quad
        h_{\mu_c}(f)\geq c.
\]
If, in addition, $f$ is entropy expansive, then $\mu_c$ may be chosen so
that
\[
        h_{\mu_c}(f)=h(f|_{\operatorname{supp}(\mu_c)})=c.
\]
\end{mainthm}

Theorem~\ref{mainC} recovers entropy flexibility under global
shadowing as a corollary, but its hypothesis is local, centered at a single chain
class.  We further combine the coding mechanism with weaker versions of expansivity. In the homeomorphism setting, this answers the
question raised in \cite{AR} in a strengthened form, with countable expansivity at
one point in place of uniform expansivity, and it yields semi-horseshoes
from expansive positively shadowable measures. None of these results
assumes global shadowing.

This measure-approximation mechanism should be compared with two
classical circles of ideas. In smooth ergodic theory, Katok's horseshoe
theorem \cite{Katok} approximates hyperbolic measures of $C^{1+\alpha}$
diffeomorphisms by horseshoes; Theorems~\ref{mainB} and~\ref{mainC} may be
viewed as a topological counterpart in which smoothness and hyperbolicity
are replaced by tracing localized at a single chain class. On the other
hand, the role of entropy expansivity in Theorem~\ref{mainC} parallels its
role in the entropy theory of symbolic extensions of Boyle and Downarowicz
\cite{BD}, where it guarantees that no entropy is hidden at arbitrarily
small scales. It is precisely this property that converts the
semi-horseshoe coding into exact entropy values.

The two parts meet on the examples constructed in Part~I.  In dimensions at
least three, the distinguished manifold component can be chosen with
positive entropy. It then contains semi-horseshoes, and every invariant
measure supported on it is approximated in the weak${}^*$ topology by
positive-entropy ergodic measures
(Corollary~\ref{cor:dense-consequences}).  In dimension two the
distinguished component in our model is an irrational rotation. The
contrast shows why local shadowing alone does not force complexity and why
the additional recurrence and non-equicontinuity hypotheses in Part~II
are natural.

The paper is organized to separate realization results from their
dynamical consequences.
Section~\ref{sec:prel} fixes notation, and Section~\ref{SectionSP} develops
pointwise shadowing and its propagation through chain classes.  Part~I
establishes the controlled manifold realization criterion, constructs
local models in every dimension at least two, and proves
Theorem~\ref{mainA}. Its final subsection proves the complementary compact-space
realization, Theorem~\ref{extthm}.  Part~II studies
shadowable measures, develops the semi-horseshoe mechanism, proves
Theorems~\ref{mainB} and~\ref{mainC}, and derives the expansivity and noninvertible
consequences.

\section{Preliminaries}\label{sec:prel}

This section fixes the notation and recalls the standard concepts used throughout the paper. For a detailed exposition of the background material, we refer the reader to~\cite{Kat}.

\subsection{Basic Topological Dynamics}\label{basics}

Throughout the paper, $(M,d)$ denotes a compact metric space and
$f\colon M\to M$ a homeomorphism, unless stated otherwise. The full orbit
of $x\in M$ is
\[
O(x)=\{f^n(x):n\in\mathbb Z\},
\]
where the usual convention is used for positive, negative, and zero
iterates. Its positive and negative orbits are
\[
O^+(x)=\{f^n(x):n\ge0\},
\qquad
O^-(x)=\{f^n(x):n\le0\}.
\]
The omega- and alpha-limit sets of $x$ are
\[
\begin{aligned}
\omega(x)&=\{y\in M:\text{there is }n_k\to\infty
                    \text{ with }f^{n_k}(x)\to y\},\\
\alpha(x)&=\{y\in M:\text{there is }n_k\to-\infty
                    \text{ with }f^{n_k}(x)\to y\}.
\end{aligned}
\]
A point $x$ is \emph{recurrent} if $x\in\omega(x)$ and
\emph{regularly recurrent} if, for every neighbourhood $U$ of $x$, there
is an integer $n>0$ such that
\[
n\mathbb Z\subset\{i\in\mathbb Z:f^i(x)\in U\}.
\]

We say that $K\subset M$ is invariant if $f(K)=K$. The sets
$\overline{O(x)}$, $\omega(x)$, and $\alpha(x)$ are compact and invariant.
A point
$x$ is \textit{non-wandering} if, for every open neighborhood $U$ of $x$,
there is $n>0$ such that $f^n(U)\cap U\neq\emptyset$.
The set of all non-wandering points, $\Omega(f)$ is closed and invariant. 
A closed invariant set is \emph{minimal} if it contains no proper,
nonempty, closed invariant subset (equivalently, every orbit in it is dense in it).  We call $f$ \textit{minimal} when $M$ is minimal.

The map $f$ is \emph{sensitive} if there is $C>0$ such that for every
nonempty open set $U\subset M$, some $n\in\mathbb N$ satisfies
$\diam(f^n(U))>C$. It is \emph{equicontinuous} if, for every $\eps>0$,
there is $\delta>0$ such that
\[
d(x,y)\le\delta
\quad\Longrightarrow\quad
d(f^n(x),f^n(y))\le\eps\quad\text{for every }n\in\mathbb Z.
\]
A compact invariant set $K$ is equicontinuous if $f|_K$ is
equicontinuous.

\subsection{Pseudo-Orbits, Chain-Recurrence and Shadowing}

Let $A\subset\mathbb Z$ be an integer interval. A sequence
$(x_i)_{i\in A}$ is a \emph{$\delta$-pseudo-orbit} if
\[
d(f(x_i),x_{i+1})\le\delta
\quad\text{whenever }i,i+1\in A.
\]
where $A$ is a sequence of consecutive integers.
We say that the pseudo-orbit is \textit{two-sided} when $A=\mathbb Z$, \textit{positive} or \textit{one-sided} when
$A=\mathbb N_0$, and \textit{finite} when $A$ is finite. in the case of finite $A$ we interchangeably use the name \textit{$\delta$-chain} for a $\delta$-pseudo-orbit.
If a finite pseudo-orbit $(x_i)_{i=0}^n$ satisfies $x_0=x$ and $x_n=y$,
we say that it connects $x$ to $y$ and write $x\to y$ when such
pseudo-orbits exist for every $\delta>0$. A point is
\emph{chain recurrent} if $x\to x$ and the set of all such points is denoted by $CR(f)$.
Points $x,y\in CR(f)$ are \emph{chain related}, written $x\sim y$, if
$x\to y$ and $y\to x$. This is an equivalence relation on $CR(f)$, and
the equivalence class of $x$, called \textit{chain recurrent class}, is denoted by $H(x)$.

The following standard facts will be used:
    \begin{enumerate}
        \item $\Omega(f)\subset CR(f)$. So, $CR(f)$ is always non-empty.
        \item Both $CR(f)$ and $H(x)$ are closed and invariant sets, for every $x\in CR(f)$.        
    \end{enumerate}

By convention, we index every finite pseudo-orbit of length $n$ by
$\{0,\ldots,n-1\}$ and often denote it by a capital letter. If
$X=(x_0,\ldots,x_{m-1})$ ends where
$Y=(y_0,\ldots,y_{n-1})$ begins, their concatenation is
\[
XY=(x_0,\ldots,x_{m-1},y_1,\ldots,y_{n-1}).
\]
A pseudo-orbit $(x_i)_{i\in A}$ is \emph{$\eps$-shadowed} if there is
$z\in M$ such that
\[
d(f^i(z),x_i)\le\eps\quad\text{for every }i\in A.
\]

\begin{definition}\label{shaddef}
Let $f:M\to M$ be a homeomorphism.

\begin{enumerate}
    \item $f$ is said to have the \textit{shadowing property} if for every $\eps>0$, there is $\delta>0$ such that every two-sided $\delta$-pseudo-orbit is $\eps$-shadowed.
    \item A subset $K\subset M$ has the \textit{shadowing property} if 
     for every $\eps>0$, there is $\delta>0$ such that every two-sided $\delta$-pseudo-orbit contained in $K$ is $\eps$-shadowed.
    \item A compact and invariant subset  $K\subset M$ is said to have the \textit{intrinsic shadowing property} if $f|_{K}$ has the shadowing property.
\end{enumerate}
    
\end{definition}

\begin{rmk}
We use the following conventions throughout:
\begin{itemize}
    \item The unqualified statement that $f$ has the shadowing property refers to item~(1), which we also call \emph{global shadowing} or simply \textit{shadowing}. When a set $K$ has the shadowing property in the sense of item~(2), tracing points may lie anywhere in $M$ and therefore we refer to it often as \emph{ambient shadowing}. When we want to emphasize the fact that the tracing point is inside invariant subset $K$ (i.e. the restricted system $f|_K$ that has the shadowing property), we speak about intrinsic shadowing.
    \item For a non-invertible map, the global shadowing property is defined using positive pseudo-orbits. For homeomorphisms this positive global property is equivalent to the standard two-sided property. In fact by compactness, the tracing conditions for all three types of pseudo-orbits (finite, positive, two-sided) are equivalent. The analogous equivalence fails for pointwise shadowability, which is why $\Sh(f)$ and $\Sh^+(f)$ must be distinguished.
\end{itemize}

\end{rmk}

\subsection{Topological Entropy}

Fix $N\subset M$, $\varepsilon>0$, and $n\in\mathbb N$. A set
$K\subset N$ is \emph{$(n,\varepsilon)$-separated} if, for every distinct
$x,y\in K$, there is $0\le n_0<n$ such that
\[
d(f^{n_0}(x),f^{n_0}(y))>\varepsilon.
\]
Let $S(n,\varepsilon,N)$ be the maximal cardinality of such a set and put
\[
h(f,N,\varepsilon)
 =\limsup_{n\to\infty}\frac1n\log S(n,\varepsilon,N),
\qquad
h(f,N)=\lim_{\varepsilon\to0}h(f,N,\varepsilon).
\]
The topological entropy of $f$ is $h(f):=h(f,M)$. Recall that
$h(f^n)=n h(f)$ for $n\ge1$. We call $f$ \emph{entropy expansive} if
there is $e>0$ such that
$h(f,B^\infty_e(x))=0$ for every $x\in M$.

\subsection{Symbolic Dynamics and Semi-Horseshoes}

Let $f:M\to M$ and $g:N\to N$ be two homeomorphisms. We say that $f$ \textit{factors} over $g$ if there is a continuous surjective map $\pi:M \to N$ (a \textit{factor map}) such that $\pi\circ f=g\circ \pi$.  A factor map $\pi$ is \emph{almost 1-1}
if $\{x\in M\colon \pi^{-1}(\pi(x))=\{x\}\}$ is residual in $M$.

Let $A_n=\{0,1,\ldots,n-1\}$ and denote
$\Sigma_n=A_n^{\mathbb{Z}}$, the space of two-sided sequences with entries
in $A_n$. For
$s\neq s'$, set
\[
d(s,s')=2^{-\min\{|i|:s_i\neq s'_i\}},
\]
and set $d(s,s)=0$. With this metric, $\Sigma_n$ is a compact metric
space. The \textit{shift map} on $\Sigma_n$ is the map
\[
\sigma:\Sigma_n\to\Sigma_n,
\]
defined by $\sigma((s_i))=(s_{i+1})$.
It is well known that $h(\sigma)=\log(n)$.
 If $f$ factors over $g$, we also say that $f$ is an extension of $g$ or $g$ is a factor of $f$.

\begin{definition}
We say that $f$ has a \emph{semi-horseshoe} if there are $n>0$ and a
compact invariant set $K\subset M$ such that $f^n|_K$ factors onto a full
shift.
\end{definition}
A consequence of $f$ having a semi-horseshoe is that $f$ has positive entropy (see \cite{Kat}). 

A map $f:M\to M$  is called an \emph{odometer}
if it is equicontinuous and there exists a regularly recurrent point $x\in M$
such that $\overline{O^+(x)}=M$. Note that with this definition, a periodic orbit is also an odometer.
If $f\colon M \to M$ is an almost one-to-one extension of an odometer, then we call it a \textit{Toeplitz flow} or \textit{Toeplitz system}. It is equivalent to say that $M=\overline{O^+(x)}$ for some regularly recurrent point $x$. For further details on odometers and Toeplitz systems the reader is referred to the paper of Downarowicz \cite{Toeplitz}.

\subsection{Ergodic Theory and Metric Entropy}

All measures are defined with respect to Borel $\sigma$-algebra on the underlying compact metric space $M$. A probability measure $\mu$ is
\emph{$f$-invariant} if $\mu(A)=\mu(f^{-1}(A))$ for every measurable
$A$, and an invariant measure is \emph{ergodic} if
$\mu(A)\mu(A^c)=0$ for every invariant measurable set $A$. We denote the
spaces of probability, invariant, and ergodic invariant measures by
$\SM(M)$, $\SM_f(M)$, and $\SM_f^e(M)$, respectively.

The Krylov--Bogolyubov theorem implies that $\SM_f(M)$ is non-empty
whenever $f$ is continuous. The ergodic decomposition theorem then implies
that $\SM_f^e(M)$ is non-empty. We say that $f$ is uniquely ergodic if
$\SM_f(M)$ is a singleton.

For each $x\in M$, define its $n$-th empirical measure by
\[
\SE_n(x)=\frac{1}{n}\sum_{j=0}^{n-1}\delta_{f^j(x)},
\]
where $\delta_y$ denotes the Dirac measure at $y$.

We next equip $\SM(M)$ with a metric inducing the weak${}^*$ topology (see
\cite{Du} for further details). For easy calculations it is often convenient to use the following metric on $\SM(M)$.
Let $BL(M)$ denote the space of bounded
real-valued Lipschitz functions on $M$, and write
$$||\phi||_L=\sup\left\{\frac{|\phi(x)-\phi(y)|}{d(x,y)}:x\neq y\right\},
\qquad ||\phi||_{BL}=||\phi||_{\infty}+||\phi||_L.$$
The space $BL(M)$ is dense in $C(M,\R)$. Fix a sequence $(\phi_n)$ 
dense in the unit ball of $BL(M)$, with respect to the introduced norm. For
any pair of measures $\mu,\nu\in \SM(M)$, define
$$d^*(\mu,\nu)=\sum_{n=1}^{\infty}\frac{1}{2^n}\left|\int \phi_n\,d\mu-\int \phi_n\,d\nu \right|.$$
In this way, $d^*$ is a metric on $\SM(M)$ whose induced topology coincides with its weak$^*$-topology. So, whenever we write $\mu_n\to \mu$, we mean that the convergence is being considered in the metric $d^*$.

 We will use the following lemma.

\begin{lemma}[\cite{LiOp}]\label{lem:measure-approx}
Let $(X,d)$ be a compact metric space and $\eps>0$.
\begin{enumerate}
  \item \label{enum:measure-approx-1}
  For a sequence $(x_i)_{i=0}^\infty$ of points in $X$ and two finite subsets $A,B$ of $\N_0$,
\[
  d^*\Bigl( \frac{1}{|A|}\sum_{i\in A} \delta_{x_i}, \frac{1}{|B|}\sum_{i\in B}\delta_{x_i}\Bigr)
  \leq \frac{|A|+|B|}{|A|\cdot|B|}|A\Delta B| + \frac{\bigl||A|-|B|\bigr|}{|A|\cdot|B|}|A\cap B|.
\]
  \item \label{enum:measure-approx-2}
  For two sequences $(x_i)_{i=0}^{m-1}$ and $(y_i)_{i=0}^{m-1}$ of points in $X$,
    if $d(x_i,y_i)<\eps$ for $i=0,1,\dotsc,m-1$, then
\[
 d^*\Bigl( \frac{1}{m}\sum_{i=0}^{m-1} \delta_{x_i},
        \frac{1}{m}\sum_{i=0}^{m-1} \delta_{y_i}\Bigr)<\eps.
\]
 \item\label{enum:measure-approx-3}
 If $\mu_i,\mu\in \SM(X)$ are such that $d^*(\mu_i,\mu)<\eps$ for $i=1,\ldots, K$,
 then for any choice of $\alpha_i\in [0,1]$
 with $\sum_{i=1}^K\alpha_i=1$ we have
\[
    d^*\Bigl(\sum_{i=1}^K \alpha_i \mu_i,\mu\Bigr)<\eps.
\]
\end{enumerate}
\end{lemma}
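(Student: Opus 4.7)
All three parts of the lemma are direct consequences of unpacking the definition
\[
d^*(\mu,\nu) \;=\; \sum_{n=1}^{\infty}\frac{1}{2^n}\Bigl|\int \phi_n\,d\mu-\int \phi_n\,d\nu \Bigr|,
\]
together with the normalization $\|\phi_n\|_{BL}\le 1$ (so both $\|\phi_n\|_\infty\le 1$ and $\|\phi_n\|_L\le 1$) and the identity $\sum_{n\ge 1}2^{-n}=1$. The plan is to treat each item by a dedicated elementary estimate term-by-term in $n$, then sum against the geometric weights.

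For (3), the key observation is pure linearity: for each test function $\phi_n$,
\[
\Bigl|\int \phi_n\,d\Bigl(\sum_{i}\alpha_i\mu_i\Bigr)-\int \phi_n\,d\mu\Bigr|
\;\le\;\sum_{i=1}^K \alpha_i\Bigl|\int \phi_n\,d\mu_i-\int \phi_n\,d\mu\Bigr|.
\]
Multiplying by $2^{-n}$ and summing in $n$ turns the right-hand side into $\sum_i \alpha_i\, d^*(\mu_i,\mu)$, which is strictly less than $\eps$ because each $d^*(\mu_i,\mu)<\eps$ and $\sum_i\alpha_i=1$.

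For (2), I would use the Lipschitz half of $\|\phi_n\|_{BL}\le 1$: pointwise, $|\phi_n(x_i)-\phi_n(y_i)|\le d(x_i,y_i)<\eps$. Averaging over $i=0,\dotsc,m-1$ gives
\[
\Bigl|\int \phi_n\,d\Bigl(\tfrac{1}{m}\sum_i \delta_{x_i}\Bigr)-\int \phi_n\,d\Bigl(\tfrac{1}{m}\sum_i \delta_{y_i}\Bigr)\Bigr|<\eps,
\]
uniformly in $n$, and then $\sum_n 2^{-n}\eps=\eps$ closes the estimate.

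The only part needing careful bookkeeping is (1). For a fixed $n$, write
\[
\int \phi_n\,d\mu-\int \phi_n\,d\nu \;=\; \sum_{i\in A\setminus B}\tfrac{\phi_n(x_i)}{|A|}-\sum_{i\in B\setminus A}\tfrac{\phi_n(x_i)}{|B|}+\sum_{i\in A\cap B}\Bigl(\tfrac{1}{|A|}-\tfrac{1}{|B|}\Bigr)\phi_n(x_i),
\]
where $\mu,\nu$ denote the two empirical measures. Using $|\phi_n|\le 1$, the absolute value of this expression is bounded by
\[
\frac{|A\setminus B|}{|A|}+\frac{|B\setminus A|}{|B|}+\frac{\bigl||A|-|B|\bigr|}{|A|\cdot|B|}\,|A\cap B|,
\]
and the first two summands are in turn bounded by $|A\Delta B|\bigl(\tfrac{1}{|A|}+\tfrac{1}{|B|}\bigr)=\tfrac{|A|+|B|}{|A|\cdot|B|}|A\Delta B|$. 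Since this bound does not depend on $n$, summing $2^{-n}$ against it yields exactly the claimed inequality. There is no real obstacle here; the only place one must be alert is tracking the signed coefficient on $A\cap B$ so as not to double-count it against the symmetric-difference term.
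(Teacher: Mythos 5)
Your proof is correct; since the paper only cites \cite{LiOp} for this lemma and gives no argument of its own, there is nothing to compare against, but your term-by-term unpacking of the $d^*$ metric is exactly the natural verification. All three decompositions are accurate, in particular the split of $\int\phi_n\,d\mu-\int\phi_n\,d\nu$ over $A\setminus B$, $B\setminus A$, and $A\cap B$ in item~(1), the use of $\|\phi_n\|_\infty\le1$ there and $\|\phi_n\|_L\le1$ in item~(2), and the convexity/linearity argument in item~(3); summing each $n$-independent (resp.\ uniform) bound against $\sum_{n\ge1}2^{-n}=1$ closes each estimate, with strictness in (2) and (3) propagating because the bounds $\tfrac1m\sum_i d(x_i,y_i)<\eps$ and $\sum_i\alpha_i d^*(\mu_i,\mu)<\eps$ are strict and uniform in $n$.
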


We call $x$ a \emph{generic point} of $\mu\in\SM_f(M)$ if
$\SE_n(x)\to\mu$. For a finite measurable partition $\mathcal P$, put
\[
\mathcal P_n=\mathcal P\vee f^{-1}(\mathcal P)\vee\cdots
             \vee f^{-(n-1)}(\mathcal P)
\]
and
\[
h_\mu(f,\mathcal P)
 =-\lim_{n\to\infty}\frac1n
   \sum_{P\in\mathcal P_n}\mu(P)\log\mu(P).
\]
The \textit{metric} or \textit{measure-theoretic entropy} of $f$ with respect to $\mu$ is
\[
h_\mu(f)=\sup\{h_\mu(f,\mathcal P):
                   \mathcal P\text{ is a finite measurable partition}\}.
\]
The metric and topological entropies of $f$ are related by the celebrated variational
principle: for every continuous map $f\colon M\to M$,
\[
h(f)=\sup_{\mu\in\SM_f(M)}h_\mu(f)
    =\sup_{\mu\in\SM_f^e(M)}h_\mu(f).
\]

We end this section with two technical, yet very useful, lemmas that will be used to approximate metric
entropy in the proof of Theorem~\ref{mainC}.

\begin{lemma}[\cite{LiOp}]\label{lemma1}
Let $\mu$ be an invariant measure for $f$. For every $\kappa>0$, there
are ergodic measures $\mu_1,\ldots,\mu_k$ such that
\[
d^*\left(\frac{1}{k}\sum_{i=1}^k\mu_i,\mu\right)<\kappa
\]
and 
\begin{enumerate}
    \item If $h_{\mu}(f)<\infty$, then $$\left|\frac{1}{k} \sum_{i=1}^kh_{\mu_i}(f)-h_{\mu}(f)\right|<\kappa.$$
    \item If $h_{\mu}(f)=\infty$, then $$\frac{1}{k}\sum_{i=1}^kh_{\mu_i}(f)> \frac{1}{\kappa}.$$
\end{enumerate}
\end{lemma}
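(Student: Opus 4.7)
The plan is to derive this lemma from the ergodic decomposition theorem together with the Jacobs formula that expresses $h_\mu(f)$ as the integral of the entropies of its ergodic components. Specifically, there is a Borel probability measure $\tau$ on $\SM_f^e(M)$ with $\mu=\int \nu\, d\tau(\nu)$ and $h_\mu(f)=\int h_\nu(f)\, d\tau(\nu)$, so both tasks reduce to approximating a $\tau$-integral by an evenly weighted finite sum of point masses.

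First I would handle the weak$^*$ approximation. Fix $N$ so that $\sum_{n>N} 2^{-n}<\kappa/4$, and consider the bounded continuous functionals $F_n(\nu):=\int \phi_n\, d\nu$ for $n=1,\dots,N$ appearing in the definition of $d^*$. Partition $\SM_f^e(M)$ into finitely many Borel pieces $B_1,\dots,B_r$ small enough that each $F_n$ oscillates by less than $\kappa/4$ on every $B_j$, and pick a representative $\nu_j\in B_j$. Setting $p_j:=\tau(B_j)$, the mixture $\sum_j p_j\nu_j$ is within $\kappa/2$ of $\mu$ in $d^*$. Approximating each $p_j$ by a rational $m_j/k$ for a sufficiently large common denominator $k$ and invoking Lemma~\ref{lem:measure-approx}(3) yields an ordered list $\mu_1,\dots,\mu_k$ (each $\nu_j$ repeated $m_j$ times) of ergodic measures with $d^*\bigl(\tfrac{1}{k}\sum_{i=1}^k\mu_i,\mu\bigr)<\kappa$.

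Next I would refine the partition so that the entropy is controlled simultaneously. In the finite case $h_\mu(f)<\infty$, the function $\nu\mapsto h_\nu(f)$ is $\tau$-integrable, so there is $L>0$ with $\int_{\{h_\nu>L\}} h_\nu\, d\tau<\kappa/8$. Since $\min(h_\nu(f),L)$ is a bounded measurable function, it can be approximated in $L^1(\tau)$ by a simple function, allowing us to refine the $B_j$ further so that $\min(h_\nu(f),L)$ also varies by at most $\kappa/8$ on each piece; choosing representatives in such a refinement and applying the rational approximation above gives $\bigl|\tfrac{1}{k}\sum_{i=1}^k h_{\mu_i}(f)-h_\mu(f)\bigr|<\kappa$. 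In the infinite case $h_\mu(f)=\infty$, the same scheme applied to the truncation $\min(h_\nu(f),T)$ for a sufficiently large $T$ forces $\int \min(h_\nu(f),T)\, d\tau>2/\kappa$, and the corresponding sample average exceeds $1/\kappa$.

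The main obstacle is arranging the weak$^*$ and entropy approximations with a \emph{single} partition and set of representatives. Because metric entropy is only affine on $\SM_f(M)$ and upper semicontinuous on $\SM_f^e(M)$ under additional hypotheses (not in general), one cannot use purely topological neighborhoods in $\SM_f^e(M)$ to control $h_\nu(f)$; the partition must be measurable, built from the simple-function approximation of $h_\nu$ in $L^1(\tau)$ rather than from a modulus-of-continuity argument. Once this measurable refinement is in place, intersecting with the topological partition controlling the $F_n$ produces the desired finite family, and the rational rounding of the weights $p_j$ finishes the proof.
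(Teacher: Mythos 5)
The paper cites this lemma from \cite{LiOp} without proof, so I am assessing your argument on its own merits. Your overall strategy --- ergodic decomposition plus the Jacobs formula $h_\mu(f)=\int h_\nu(f)\,d\tau(\nu)$, followed by a finite partition of $\SM_f^e(M)$ that simultaneously controls the weak$^*$ distance and the entropy average --- is the right one, and your observation that the entropy part must be controlled by a \emph{measurable} partition (because $\nu\mapsto h_\nu(f)$ is not continuous) rather than a topological one is exactly the correct sensitivity. You also correctly read $h(f)$ in the statement as a typo for $h_\mu(f)$.

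There is, however, a genuine gap in the entropy estimate. Controlling the truncation $\min(h_\nu,L)$ gives no control over $h_\nu$ itself on $\{h_\nu>L\}$. After refining so that $\min(h_\nu,L)$ oscillates by at most $\kappa/8$ on each piece, some pieces $B_j$ lie entirely inside $\{h_\nu\geq L\}$ (there $\min(h_\nu,L)\equiv L$, so such a piece is certainly compatible with the oscillation bound). If you pick an \emph{arbitrary} representative $\nu_j$ from such a piece, $h_{\nu_j}(f)$ can be as large as you like, or even $+\infty$ --- finiteness of $h_\mu(f)$ only gives $h_\nu(f)<\infty$ for $\tau$-a.e.\ $\nu$, not for every ergodic component --- so $\frac{1}{k}\sum_i h_{\mu_i}(f)$ may overshoot $h_\mu(f)$ by more than $\kappa$ or even diverge, and item (1) fails. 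The fix is to choose the representatives more carefully: on each $B_j$ pick $\nu_j$ with $h_{\nu_j}(f)\leq \frac{1}{\tau(B_j)}\int_{B_j}h_\nu(f)\,d\tau(\nu)$ (such a $\nu_j$ exists, since an average cannot strictly exceed every value on the set). With that choice the total contribution of the tail pieces is at most $\int_{\{h_\nu>L\}}h_\nu\,d\tau<\kappa/8$, and the lower bound on the non-tail pieces comes from the $\kappa/8$ oscillation bound, giving the two-sided estimate. A related gap appears when you replace $p_j$ by $m_j/k$: since the $h_{\nu_j}(f)$ need not be uniformly bounded, the perturbation $\sum_j|p_j-m_j/k|\,h_{\nu_j}(f)$ is not automatically small; you must either round the weights downward and absorb the slack in a piece with small representative entropy, or choose $k$ large relative to $\max_j h_{\nu_j}(f)$ (which is finite after the careful choice of representatives above). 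Both issues are fixable, but as written the argument does not establish the upper bound in item (1).
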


\begin{lemma}[\cite{LiOp}]\label{lemma2}
    Let $\mu\in \SM^e_f(M)$. For every $\kappa>0$, there is $\eps>0$ such that for every neighborhood $\SU\subset \SM(M)$ of $\mu$, there is $N>0$ such that if $n>N$, there is a $n$-$\eps$-separated set $\Gamma_n\subset  \su(\mu)$ satisfying:
\begin{enumerate}
    \item  $\SE_n(x)\in \SU$, for every $x\in \Gamma_n$.
    \item If $h_{\mu}(f)<\infty$, then $$\left|\frac{1}{n}\log(\#\Gamma_n)-h_{\mu}(f)   \right|<\kappa.$$
    
    \item  If $h_{\mu}(f)=\infty$, then $$\frac{1}{n}\log(\#\Gamma_n)> \frac{1}{\kappa}.$$
     
\end{enumerate}

\end{lemma}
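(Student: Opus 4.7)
The plan is to combine three classical tools: Birkhoff's ergodic theorem to localize points whose empirical measures enter $\SU$; the Shannon--McMillan--Breiman theorem (SMB) to bound the number of atoms of a suitable partition meeting a large-measure set; and Katok's entropy formula to produce a matching lower bound on maximal $(n,\eps)$-separated sets. The key structural point is that $\eps$ must be chosen from $\kappa$ alone, so I fix it as (essentially) the diameter of a finely adapted partition \emph{before} $\SU$ enters the argument; the neighborhood $\SU$ then only affects the threshold $N$.

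First I treat the case $h_\mu(f)<\infty$. Fix a finite Borel partition $\mathcal{P}$ of $M$ with $\mu(\partial\mathcal{P})=0$, $h_\mu(f,\mathcal{P})>h_\mu(f)-\kappa/4$, and $\diam(\mathcal{P})$ as small as needed (refining can only increase the entropy term). Then choose $\eps\in(0,\diam(\mathcal{P})]$ small enough that Katok's entropy formula yields
\begin{equation*}
\liminf_{n\to\infty}\frac{1}{n}\log s(n,\eps,A_n)\geq h_\mu(f)-\kappa/2
\end{equation*}
for every sequence $(A_n)$ of measurable sets with $\mu(A_n)\geq 1/2$, where $s(n,\eps,A_n)$ denotes the maximal cardinality of an $(n,\eps)$-separated subset of $A_n$; this is possible because the $\eps\to 0$ limit in Katok's formula equals $h_\mu(f)$.

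Given a weak-$*$ neighborhood $\SU$ of $\mu$, pick a basic neighborhood $V\subset\SU$ defined by finitely many continuous functions. Birkhoff's theorem, applied to these functions, gives $\mu(B_n)\to 1$, where $B_n:=\{x\colon \SE_n(x)\in\SU\}$. Simultaneously, SMB produces a set $C_n$ with $\mu(C_n)\to 1$ on which
\begin{equation*}
e^{-n(h_\mu(f,\mathcal{P})+\kappa/8)}\leq \mu(P_n(x))\leq e^{-n(h_\mu(f,\mathcal{P})-\kappa/8)},
\end{equation*}
where $P_n(x)$ is the atom of $\mathcal{P}_n$ containing $x$. Setting $A_n:=B_n\cap C_n\cap\su(\mu)$, one has $\mu(A_n)>1/2$ for $n$ large; take $\Gamma_n$ to be any maximal $(n,\eps)$-separated subset of $A_n$. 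By construction $\Gamma_n\subset\su(\mu)$ and $\SE_n(x)\in\SU$ for every $x\in\Gamma_n$.

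The lower bound $\tfrac{1}{n}\log\#\Gamma_n\geq h_\mu(f)-\kappa$ is immediate from Katok's formula applied to $(A_n)$. For the upper bound, two points in the same atom of $\mathcal{P}_n$ satisfy $d(f^i(x),f^i(y))\leq\diam(\mathcal{P})\leq\eps$ for $0\leq i<n$, so they cannot both lie in the $(n,\eps)$-separated set $\Gamma_n$; hence $\#\Gamma_n$ is bounded by the number of atoms of $\mathcal{P}_n$ meeting $C_n$, and the SMB lower estimate on $\mu(P_n(x))$ gives that this number is at most $e^{n(h_\mu(f,\mathcal{P})+\kappa/8)}\leq e^{n(h_\mu(f)+\kappa)}$. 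The case $h_\mu(f)=\infty$ is handled identically by replacing the condition ``$h_\mu(f,\mathcal{P})$ close to $h_\mu(f)$'' with ``$h_\mu(f,\mathcal{P})>2/\kappa$'' and invoking the divergent version of Katok's formula; only the lower bound is needed. The delicate point is reconciling both bounds at the \emph{same} scale $\eps$: the partition must be fine enough that distinct atoms witness $(n,\eps)$-separation (upper bound), while $\eps$ must be small enough that Katok delivers the matching growth rate (lower bound); taking $\eps=\diam(\mathcal{P})$ with $\diam(\mathcal{P})$ small reconciles both requirements.
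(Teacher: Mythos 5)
Your argument is correct: combining Katok's entropy formula (to fix $\eps$ from $\kappa$ alone and get the lower bound on separated sets), Birkhoff's theorem (to place the empirical measures in $\SU$, affecting only $N$), and Shannon--McMillan--Breiman (to cap $\#\Gamma_n$ by the number of atoms of $\mathcal{P}_n$ meeting a large-measure set) is exactly the standard proof of this lemma, which the paper does not reprove but imports from \cite{LiOp}. The only blemish is the momentary inconsistency between ``$\eps\in(0,\diam(\mathcal{P})]$'' and ``$\diam(\mathcal{P})\leq\eps$''; your closing choice $\eps=\diam(\mathcal{P})$, with $\diam(\mathcal{P})$ taken below the Katok threshold (which is legitimate since the lower bound only improves as the scale decreases), resolves it.
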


\section{Shadowable points and propagation}\label{SectionSP}

We now pass from global shadowing to its pointwise form.  Our primary object is the standard two-sided notion of a shadowable point introduced in \cite{Mo}. We will also consider a weaker positive version, because several of the arguments developed later require only forward tracing. This distinction allows us to state those results under the hypotheses used in their proofs while retaining the classical notion as the main geometric object.

\begin{definition}[Shadowable point]
A point $x\in M$ is \emph{shadowable} if, for every $\eps>0$, there is
$\delta>0$ such that every two-sided $\delta$-pseudo-orbit with $x_0=x$
is $\eps$-shadowed.
\end{definition}

Let us denote by $\Sh(f)$ the set of shadowable points of $f$. Morales \cite{Mo} proved, among other properties, that $\Sh(f)$ is $f$-invariant and that $\Sh(f)=M$ implies the shadowing property. 

\begin{definition}
    We say that a point $x\in M$ is \textit{positively shadowable} if for every $\eps>0$, there is $\delta>0$ such that every positive $\delta$-pseudo-orbit through $x$ (that is, with $x_0=x$) is $\eps$-shadowed. 
\end{definition}

We denote the set of positively shadowable points by $\Sh^+(f)$. Directly by definition,
$\Sh(f)\subset Sh^+(f).$
The following example shows that this inclusion cannot be reversed.

\begin{example}
Let
$M=\mathbb S^1$ and fix three points $x,y,z$ in counter-clockwise order.
For oriented arcs we write $(a,b)$. Define a homeomorphism $f$ fixing
$x,y,z$ such that $x$ is a source, $z$ is a sink, and $y$ attracts
points of $(x,y)$ while repelling points of $(y,z)$; see
Figure~\ref{FigSvsPS}.

    \begin{figure}[h]
     \centering
   \includegraphics[scale=0.58]{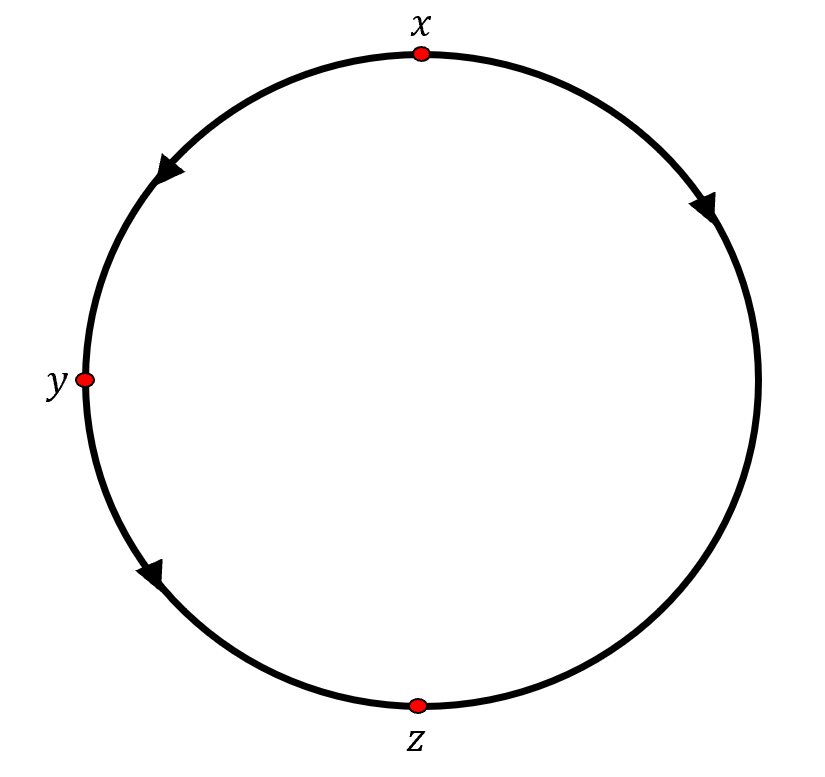}
   \caption{A homeomorphism with positively shadowable points that are not
   shadowable.}
    \label{FigSvsPS}
 \end{figure}

For arbitrarily small $\delta>0$, one can easily construct a two-sided
$\delta$-pseudo-orbit $(x_i)$ of either of the following types:
 \begin{enumerate}
        \item  $x_0\in(x,y)$; $x_i\to x$ as $i\to-\infty$; $x_{i_0}\in(y,z)$ for some $i_0>0$; $x_i\to z$ as $i\to\infty$;
or
        \item $x_0\in(y,z)$; $x_i\to z$ as $i\to\infty$; $x_{i_0}\in(x,y)$ for some $i_0<0$; $x_i\to x$ as $i\to-\infty$.
    \end{enumerate}
Such pseudo-orbits cannot be shadowed, since no orbit in $(x,y)$ visits
$(y,z)$ in the future. On the other hand, forward trapping at the sink
$z$ gives positively shadowable points in $(y,z)$. Hence
$z\in Sh^+(f)\setminus Sh(f)$.

\end{example}

The previous example shows that $\Sh^+(f)$ can be strictly larger than the classical set $\Sh(f)$. By the characterization in \cite{Mo}*{Theorem~1.1}, a homeomorphism has the shadowing property if and only if $\Sh^+(f)=Sh(f)=M$.

As usual in pointwise dynamics, the main obstruction to obtaining compact invariant families is that the tracing constants may shrink from point to point. The following example already exhibits this problem for the classical set $\Sh(f)$: density of $\Sh(f)$ does not imply $\Sh(f)=M$ and therefore does not imply the global conclusion of \cite{Mo}*{Theorem~1.1}.

\begin{example}
Let $I=[0,1]$, identify $\mathbb S^1$ with $\mathbb R/\mathbb Z$, and
endow $I\times\mathbb S^1$ with the maximum metric. For $n\ge1$, put
\[
A_n=\left\{0,\frac{1}{n},\ldots,\frac{n-1}{n}\right\},\qquad
K_n=\left\{\frac{1}{n}\right\}\times A_n,
\]
and define
\[
M=\bigl(\{0\}\times\mathbb S^1\bigr)\cup\bigcup_{n\ge1}K_n.
\]
Then $M$ is compact, the union of the sets $K_n$ is dense in $M$, and
each $K_n$ is isolated in $M$.

Define
\[
f\left(\frac{1}{n},\frac{i}{n}\right)=\left(\frac{1}{n},\frac{i+1}{n}\right)
\]
with the second coordinate taken modulo one, and let $f$ be the identity
on $\{0\}\times\mathbb S^1$. Since $A_n$ converges to $\mathbb S^1$ in
the Hausdorff topology, $f$ is a homeomorphism of $M$. Each $K_n$ is an
isolated periodic orbit and hence $K_n\subset Sh(f)$. Consequently,
\[
\bigcup_{n\ge1}K_n\subset Sh(f),
\]
so $\Sh(f)$ is dense in $M$.

We claim that no point of $\{0\}\times\mathbb S^1$ is shadowable. Given $\delta>0$, choose a
two-sided $\delta$-pseudo-orbit $(x_i)$ in the central circle such that
\begin{enumerate}
    \item $0<C\le d(x_i,x_{i+1})\le\delta$;
    \item $(x_i)$ moves in the reverse orientation of $\mathbb S^1$.
\end{enumerate}  
The first condition prevents tracing by the identity dynamics on the
central circle, while the second prevents tracing by any of the periodic
orbits $K_n$, because each of them is oriented the same as $\frac{1}{n}$ circle rotation. Thus $f$ does not have the shadowing property although $\Sh(f)$ is dense.
   
\end{example}

In the previous example, the shadowing constants of the periodic orbits shrink while approaching the fiber $\{0\}\times \mathbb{S}^1$. Thus compactness alone becomes useful only after the pointwise constants have been made uniform. The next result establishes this uniformity under the weaker hypothesis $K\subset Sh^+(f)$ and hence, in particular, for compact subsets of $\Sh(f)$.

\begin{theorem}\label{localshad}
   Suppose $K\subset Sh^+(f)$ is a compact set. Then, for every $\eps>0$ there is $\delta>0$ such that if $(x_i)$ is a positive $\delta$-pseudo-orbit through $y\in B_{\delta}(K)$, then $(x_i)$ is $\eps$-shadowed.
\end{theorem}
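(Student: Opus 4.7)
The plan is to argue by contradiction, accumulating hypothetical bad pseudo-orbits at a single point $x\in K$, and then using the shadowing constant given by $x\in Sh^+(f)$ as a uniform replacement for the constants that the counterexamples would require at the moving base points. Concretely, suppose the statement fails: there is $\eps_0>0$ and, for every $n\geq 1$, a point $y_n\in B_{1/n}(K)$ together with a positive $(1/n)$-pseudo-orbit $(x^n_i)_{i\geq 0}$ with $x^n_0=y_n$ that admits no $\eps_0$-shadowing point. Since $d(y_n,K)\leq 1/n\to 0$ and $K$ is compact, pass to a subsequence (still indexed by $n$) with $y_n\to x$ for some $x\in K$.

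Since $x\in Sh^+(f)$, choose $\delta_0>0$ so that every positive $\delta_0$-pseudo-orbit through $x$ is $\eps_0/2$-shadowed. By continuity of $f$ and the convergence $y_n\to x$, for all sufficiently large $n$ we have $d(f(x),f(y_n))<\delta_0/2$, $d(x,y_n)<\eps_0/2$, and $1/n<\delta_0/2$. The key surgery is to replace the initial term of the given pseudo-orbit by $x$, forming
\[
\tilde{x}^n=(x,\,x^n_1,\,x^n_2,\,\dotsc).
\]
The only nontrivial estimate occurs at index $0$:
\[
d(f(x),x^n_1)\leq d(f(x),f(y_n))+d(f(y_n),x^n_1)<\tfrac{\delta_0}{2}+\tfrac{1}{n}<\delta_0,
\]
whereas for $i\geq 1$ the inequality $d(f(x^n_i),x^n_{i+1})\leq 1/n<\delta_0$ is inherited from the hypothesis. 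Hence $\tilde{x}^n$ is a positive $\delta_0$-pseudo-orbit through $x$, so by the choice of $\delta_0$ it is $\eps_0/2$-shadowed by some $z_n\in M$.

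It remains to observe that this same $z_n$ already $\eps_0$-shadows the original pseudo-orbit $(x^n_i)$: for $i\geq 1$ we have $d(f^i(z_n),x^n_i)\leq \eps_0/2<\eps_0$ directly, and at $i=0$
\[
d(z_n,y_n)\leq d(z_n,x)+d(x,y_n)\leq \tfrac{\eps_0}{2}+\tfrac{\eps_0}{2}=\eps_0,
\]
contradicting the choice of $(x^n_i)$. The only delicate point in the whole argument is this initial-index bookkeeping: because the original pseudo-orbit starts at $y_n$ rather than at $x$, continuity of $f$ together with the triangle inequality is used exactly once, in order to absorb the shift from $y_n$ to $x$ into the shadowing budget. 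This is precisely where the hypothesis $y_n\in B_{1/n}(K)$ (and not merely $y_n\in K$) enters in an essential way, and it is where a naive adaptation of Moriyasu's argument from $Sh(f)=M$ requires attention; beyond it there is no genuine obstacle.
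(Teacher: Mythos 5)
Your proof is correct and rests on the same key idea as the paper's: replacing the initial term $y$ of the pseudo-orbit by a nearby point $x\in K$ via continuity of $f$, shadowing the modified pseudo-orbit through $x$, and absorbing the displacement $d(x,y)$ into the shadowing budget. The only difference is cosmetic — you obtain the uniform $\delta$ by contradiction and sequential compactness of $K$, whereas the paper extracts a finite subcover of balls $B_{\delta_x}(x)$ and takes a Lebesgue-type number; both are routine uses of compactness.
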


\begin{proof}
Fix $\eps>0$ and $x\in K$, and let $\eta>0$ be a tracing constant for
$x$ with accuracy $\frac{\eps}{2}$. Choose
\[
0<\delta_x\le\min\left\{\frac{\eta}{2},\frac{\eps}{2}\right\}
\]
so small that $d(y,z)\le\delta_x$ implies
$d(f(y),f(z))\le\frac{\eta}{2}$. Let $(x_i)_{i\ge0}$ be a positive
$\delta_x$-pseudo-orbit with $x_0=y\in B_{\delta_x}(x)$, and replace its
initial point by $x$:
\[
x'_i=
\begin{cases}
x,&i=0,\\
x_i,&i>0.
\end{cases}
\]
The only new pseudo-orbit inequality is
\[
d(f(x),x_1)
\le d(f(x),f(y))+d(f(y),x_1)
\le\frac{\eta}{2}+\delta_x\le\eta.
\]
Thus $(x'_i)$ is $\frac{\eps}{2}$-shadowed by some $z\in M$. At index zero,
\[
d(z,y)\le d(z,x)+d(x,y)<\frac{\eps}{2}+\delta_x\le\eps,
\]
and all later terms of the two pseudo-orbits agree. Hence $(x_i)$ is
$\eps$-shadowed.

For every $x\in K$, choose such a constant $\delta_x$. A finite
subcollection of the balls $B_{\delta_x}(x)$ covers $K$. By a Lebesgue
number argument, there is $\delta>0$, no larger than all corresponding
$\delta_x$, such that
\[
B_\delta(K)\subset\bigcup_{j=1}^k B_{\delta_{x_j}}(x_j).
\]
Every positive $\delta$-pseudo-orbit beginning in $B_\delta(K)$ therefore
begins in one of these neighbourhoods and therefore is $\eps$-shadowed.

\end{proof}

Our next result shows that positive pointwise shadowability propagates along a chain-recurrent class. The argument does not work for $\Sh(f)$.

\begin{theorem}\label{ShadClass}
     If $x\in Sh^+(f)\cap CR(f)$, then $H(x)\subset Sh^+(f)$.
\end{theorem}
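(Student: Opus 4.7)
The plan is to exploit the defining property of $x \in Sh^+(f)$ directly, by pre-pending a connecting pseudo-orbit from $x$ to $y$ to any pseudo-orbit starting at $y$, and then taking an appropriate iterate of the shadow.

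Fix $y \in H(x)$ and $\eps > 0$. I would start by using that $x \in Sh^+(f)$ to obtain $\delta_1 > 0$ such that every positive $\delta_1$-pseudo-orbit through $x$ is $\eps$-shadowed. Next, since $y \in H(x)$, in particular we have $x \to y$, so for this constant $\delta_1$ there exists a finite $\delta_1$-pseudo-orbit $x = z_0, z_1, \dots, z_m = y$ connecting $x$ to $y$.

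Setting $\delta = \delta_1$, I would then argue that given any positive $\delta$-pseudo-orbit $(y_i)_{i \geq 0}$ with $y_0 = y$, the concatenation
\[
  w_i = \begin{cases} z_i & \text{if } 0 \leq i < m,\\ y_{i-m} & \text{if } i \geq m,\end{cases}
\]
is a positive $\delta_1$-pseudo-orbit starting at $x$. The only nontrivial index to check is the junction $i = m-1$, where $d(f(w_{m-1}), w_m) = d(f(z_{m-1}), y_0) = d(f(z_{m-1}), z_m) \leq \delta_1$ by construction; all other indices inherit the pseudo-orbit condition from $(z_i)$ or $(y_i)$. By the choice of $\delta_1$, there is $z \in M$ with $d(f^i(z), w_i) \leq \eps$ for all $i \geq 0$. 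Specializing to $i \geq m$ gives $d(f^{m+j}(z), y_j) \leq \eps$ for every $j \geq 0$, so $f^m(z)$ is an $\eps$-shadow of $(y_i)$. This establishes $y \in Sh^+(f)$.

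There is no serious obstacle here: the argument relies only on the one-sided nature of the definition of $Sh^+(f)$ (which allows pre-pending without worrying about past dynamics) and on the direction $x \to y$ of the chain equivalence (the opposite direction $y \to x$ and the hypothesis $x \in CR(f)$ enter only to ensure that the class $H(x)$ is well-defined as an equivalence class). The mild point to keep in mind is that Theorem \ref{localshad} is not needed in its full strength — the bare definition of positive shadowability at the single point $x$ suffices — so the lemma is really a soft consequence of the way pseudo-orbits can be surgically glued at a coinciding endpoint.
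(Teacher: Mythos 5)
Your proposal is correct and follows essentially the same route as the paper's proof: concatenate a finite $\delta$-pseudo-orbit from $x$ to $y$ (available since $x\to y$) with the given one-sided pseudo-orbit through $y$, shadow the result from $x$, and shift the shadowing point by the length of the connecting segment. The paper's argument is identical in substance, including the observation (recorded there as a remark) that the constant $\delta$ works uniformly for all of $H(x)$.
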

\begin{proof}

Suppose $x\in Sh^+(f)\cap CR(f)$. Fix $\eps>0$ and let $\delta>0$
be a positive shadowing constant for $x$. Given $y\in H(x)$ and a
positive $\delta$-pseudo-orbit $(y_i)_{i\ge0}$ through $y$, choose a finite
$\delta$-pseudo-orbit $(x_0,\ldots,x_k)$ from $x_0=x$ to $x_k=y$.
Concatenate the two:
\[
z_j=
\begin{cases}
x_j,&0\le j\le k,\\
y_{j-k},&j>k.
\end{cases}
\]
This is a positive $\delta$-pseudo-orbit through $x$, so it is
$\eps$-shadowed by some $z\in M$. Consequently, $f^k(z)$
$\eps$-shadows $(y_i)_{i\ge0}$.
    \end{proof}

\begin{remark}
The proof gives a shadowing constant that is uniform over all points of
$H(x)$.
\end{remark}

As a consequence, we can characterize when the weaker set $\Sh^+(f)$ contains a compact invariant subset. 
\begin{corollary}
$\Sh^+(f)$ contains a nonempty closed invariant subset if and only if
$\Sh^+(f)\cap CR(f)\neq\emptyset$.
\end{corollary}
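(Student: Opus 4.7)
The statement is an if-and-only-if, so the plan is to handle the two directions separately, and the bulk of the work for the hard direction has already been done in Theorem~\ref{ShadClass}.

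For the ``if'' direction, I would simply invoke Theorem~\ref{ShadClass}. Given $x\in Sh^+(f)\cap CR(f)$, that theorem yields $H(x)\subset Sh^+(f)$. Since $H(x)$ is recalled in Section~\ref{basics} (and the paragraph following Chain-Recurrence) to be closed and $f$-invariant, and it contains $x$ (so it is nonempty), it is a closed invariant subset of $Sh^+(f)$. This direction is essentially a one-line observation, so no obstacle is expected here.

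For the ``only if'' direction, let $K\subset Sh^+(f)$ be a (nonempty) closed $f$-invariant subset. The plan is to produce a chain-recurrent point inside $K$. Since $M$ is compact and $K$ is closed, $K$ is itself compact; hence for any $x\in K$, the set $\omega(x)$ is a nonempty compact $f$-invariant subset of $K$. Picking any $y\in\omega(x)$, I would argue that $y\in\Omega(f)$: every neighborhood $U$ of $y$ contains arbitrarily large iterates $f^{n_k}(x)$ and $f^{n_{k+1}}(x)$, so for a suitable $n_k<n_{k+1}$ we have $f^{n_{k+1}-n_k}(U)\cap U\neq\emptyset$. Since $\Omega(f)\subset CR(f)$ (listed as an elementary fact in the preliminaries), we conclude $y\in CR(f)$. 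As $y\in K\subset Sh^+(f)$ as well, we obtain $y\in Sh^+(f)\cap CR(f)$, proving this intersection is nonempty.

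Neither direction presents a substantive obstacle: the forward direction is a direct invocation of Theorem~\ref{ShadClass}, and the backward direction is a standard compactness argument producing a nonwandering (hence chain-recurrent) point inside an invariant compact set. The only minor care needed is the implicit nonemptiness assumption on the closed invariant subset (otherwise the statement is vacuously false in the ``only if'' direction), which is the standard reading of ``contains a closed and invariant subset'' in this context.
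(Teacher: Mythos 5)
Your proof is correct and matches the paper's argument in both directions: the forward direction via Theorem~\ref{ShadClass} is identical, and the backward direction uses the same principle (a nonempty compact invariant set must meet $CR(f)$), though you locate the chain-recurrent point by taking $\omega(x)\subset K$ and showing its points are non-wandering, whereas the paper cites the existence of a minimal subset of $K$ — both are standard and equivalent in effect.
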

\begin{proof}
If a nonempty closed invariant set $K$ is contained in $\Sh^+(f)$, then
$K$ contains a minimal set and hence meets $CR(f)$. Conversely, if
$x\in Sh^+(f)\cap CR(f)$, Theorem~\ref{ShadClass} gives
$H(x)\subset Sh^+(f)$, and $H(x)$ is nonempty, closed, and invariant.
\end{proof}

The next result converts the positive pointwise hypothesis into the two-sided shadowing property for pseudo-orbits contained in the chain class, in the ambient sense of Definition \ref{shaddef}.

\begin{theorem}\label{shadonH}
    If $x\in Sh^+(f)\cap CR(f)$, then $H(x)$ has the shadowing property as a subset of $M$.
\end{theorem}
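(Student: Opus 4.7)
The plan is to combine Theorem~\ref{ShadClass} with Theorem~\ref{localshad} and then close off the past via a compactness argument. Since $H(x)$ is closed in $M$ and $M$ is compact, the set $K := H(x)$ is itself compact; by Theorem~\ref{ShadClass} we have $K \subset Sh^+(f)$. Given $\eps > 0$, I would invoke Theorem~\ref{localshad} with this choice of $K$ to obtain $\delta > 0$ such that every positive $\delta$-pseudo-orbit starting at a point of $H(x) \subset B_\delta(K)$ is $\eps$-shadowed.

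Next, let $(y_i)_{i \in \Z}$ be an arbitrary two-sided $\delta$-pseudo-orbit contained in $H(x)$. For each $n \geq 0$ the truncated sequence $(y_{-n+j})_{j \geq 0}$ is a positive $\delta$-pseudo-orbit based at $y_{-n} \in H(x)$, hence is $\eps$-shadowed by some $z_n \in M$. Setting $w_n = f^n(z_n)$, a direct reindexing gives $d(f^i(w_n), y_i) \leq \eps$ for every $i \geq -n$; in other words $w_n$ traces the two-sided pseudo-orbit along the half-line $[-n, \infty)$.

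By compactness of $M$, extract a convergent subsequence $w_{n_k} \to w$. For any fixed $i \in \Z$ and every sufficiently large $k$ we have $n_k \geq -i$, so $d(f^i(w_{n_k}), y_i) \leq \eps$. Since $f$ is a homeomorphism, $f^i$ is continuous for every $i \in \Z$, so passing to the limit yields $d(f^i(w), y_i) \leq \eps$. Hence $w$ is an $\eps$-shadow of $(y_i)_{i \in \Z}$, which is precisely what Definition~\ref{shaddef}(2) demands for $H(x)$.

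The main obstacle is moving from one-sided tracing (which is all that $Sh^+(f)$ encodes) to two-sided tracing. The key observation is that the past of any finite-length initial segment becomes irrelevant once we shift the viewpoint: looking at the pseudo-orbit starting at $y_{-n}$ produces a genuine positive pseudo-orbit based in $H(x)$ to which Theorem~\ref{localshad} applies, and the shifted shadows $w_n = f^n(z_n)$ accumulate to a two-sided shadow. Note that the shadow $w$ need not belong to $H(x)$, but Definition~\ref{shaddef}(2) does not impose this requirement, so the argument goes through unobstructed.
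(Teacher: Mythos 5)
Your proof is correct and follows essentially the same route as the paper's: both obtain a uniform shadowing constant over all of $H(x)$ (you via Theorem~\ref{localshad} applied to the compact set $K=H(x)$, the paper via the uniformity noted after Theorem~\ref{ShadClass}), then shadow the backward-truncated positive pseudo-orbits, shift the shadows forward by $f^n$, and pass to a limit point using compactness and continuity of each $f^i$. No gaps.
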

\begin{proof}
Theorem~\ref{ShadClass} and its proof give a uniform positive tracing
constant on $H(x)$. Fix $\eps>0$, choose such a $\delta>0$, and let
$(x_j)_{j\in\mathbb Z}\subset H(x)$ be a two-sided
$\delta$-pseudo-orbit. For each $i\ge0$, the sequence
\[
y^i_j:=x_{j-i},\qquad j\ge0,
\]
is a positive $\delta$-pseudo-orbit through $x_{-i}$. Choose $z_i$ that
$\eps$-shadows $(y^i_j)_{j\ge0}$. Passing to a subsequence, compactness
gives $f^i(z_i)\to z$ for some $z\in M$. For fixed $n\in\mathbb Z$ and
all sufficiently large $i$, we have $i+n\ge0$, and hence
\[
d(f^n(z),x_n)
=\lim_{i\to\infty}d(f^{i+n}(z_i),x_n)\le\eps.
\]
Thus $z$ shadows the original two-sided pseudo-orbit.
\end{proof}

\begin{remark}
    Observe that a set $K$ having the shadowing property in the ambient sense of Definition \ref{shaddef} does not imply $K\subset Sh^+(f)$.  The former concerns only pseudo-orbits entirely contained in $K$, whereas positive pointwise shadowability must also trace pseudo-orbits that start in $K$ and subsequently leave it.
\end{remark}

Theorem \ref{shadonH} gives ambient  shadowing for $H(x)$ but does not ensure that the tracing orbit lies in $H(x)$. Thus it does not by itself give intrinsic shadowing for the restricted system $f|_{H(x)}$. Local maximality supplies precisely this missing conclusion as shown below. Recal that an invariant set $\Lambda$ is locally maximal, if there exists an open set $U\supset \Lambda$ such that $\Lambda$ is the largest invariant set contained in $U$. This implies that $\bigcap_{n\geq 0}f^n(U)=\Lambda$.

\begin{corollary}\label{cor:loc_max}
If $x\in Sh^+(f)\cap CR(f)$ and $H(x)$ is locally maximal, then $H(x)$ has the intrinsic shadowing property.    
\end{corollary}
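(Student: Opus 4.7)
The plan is to combine the shadowing property of $H(x)$ established in the preceding theorem with the local maximality hypothesis. The previous theorem already supplies, for every $\eps>0$, a $\delta>0$ such that every two-sided $\delta$-pseudo-orbit contained in $H(x)$ is $\eps$-shadowed by some point $z\in M$. What we still owe for intrinsic shadowing is that $z$ can be taken inside $H(x)$.

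First I would fix an open neighborhood $U$ of $H(x)$ witnessing local maximality, i.e.\ such that every compact $f$-invariant subset of $U$ is contained in $H(x)$; equivalently, $H(x)=\bigcap_{n\in\mathbb{Z}}f^n(U)$. Since $H(x)$ is compact and $U$ is open, I would pick $\eps_0>0$ so small that $\overline{B_{\eps_0}(H(x))}\subset U$.

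Now given $\eps>0$, set $\eps'=\min\{\eps,\eps_0\}$ and let $\delta>0$ be the constant furnished by the previous theorem for $\eps'$. Take any two-sided $\delta$-pseudo-orbit $(x_i)_{i\in\mathbb{Z}}\subset H(x)$ and let $z\in M$ be a shadow, so $d(f^i(z),x_i)\leq\eps'$ for every $i\in\mathbb{Z}$. Because each $x_i\in H(x)$, the full orbit satisfies $O(z)\subset B_{\eps'}(H(x))\subset \overline{B_{\eps_0}(H(x))}\subset U$. Hence $\overline{O(z)}$ is a compact $f$-invariant subset of $U$, and local maximality forces $\overline{O(z)}\subset H(x)$. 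In particular $z\in H(x)$, which gives the intrinsic shadowing property of $H(x)$ with shadowing constant $\delta$ for accuracy $\eps$.

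The only delicate point is the choice of $\eps_0$; once the neighborhood $U$ from local maximality is inflated away from $H(x)$ by a definite amount, the shadow furnished by the previous theorem is automatically trapped in $U$, and local maximality finishes the argument. No further approximation or selection procedure is required, so this is really a short corollary of the previous theorem together with the definition of local maximality.
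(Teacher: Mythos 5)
Your proof is correct and follows essentially the same route as the paper: invoke the preceding theorem to get a shadowing point $z\in M$, shrink $\eps$ so that $B_{\eps}(H(x))$ sits inside an isolating neighborhood $U$, observe that the entire orbit of $z$ is then trapped in $U$, and conclude $z\in H(x)$ by local maximality. You spell out more carefully than the paper why $\overline{O(z)}\subset U$ (via the closed enlargement $\overline{B_{\eps_0}(H(x))}$), which is a welcome precision, but the substance is identical.
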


\begin{proof}
Let $U$ be an isolating neighbourhood of $H(x)$ and fix a desired
accuracy $\eps_0>0$. Choose $0<\eps<\eps_0$ so small that
$B_\eps(H(x))\subset U$. By Theorem~\ref{shadonH}, every sufficiently
accurate two-sided pseudo-orbit in $H(x)$ is $\eps$-shadowed by some
$z\in M$. Its entire orbit lies in $U$, so local maximality gives
$z\in\bigcap_{n\in\mathbb Z}f^n(U)=H(x)$. This tracing point also
$\eps_0$-shadows the pseudo-orbit.
\end{proof}

Theorems~\ref{ShadClass} and~\ref{shadonH} give the propagation
mechanism that drives the rest of the paper. A single positively
shadowable chain-recurrent point gives uniform positive pointwise tracing
throughout its chain class and two-sided ambient shadowing for
pseudo-orbits contained in that class. Corollary~\ref{cor:loc_max} also
marks the exact point at which intrinsic shadowing enters: it follows from
local maximality, but not from pointwise tracing alone. Part~I shows that
this distinction is realized by homeomorphisms of manifolds.

\section{Part I: Realization and density}\label{sec:part-realization}

The principal result of this part is Theorem~\ref{thm:shadowable-dense}. On
every compact manifold of dimension at least two, it produces densely a
homeomorphism with a transitive chain component
$D\subseteq Sh(T)$ although neither the global map nor the restriction
$T|_D$ has the shadowing property. In fact, every chain recurrent class meeting a
neighborhood of $D$ fails shadowing. Thus the pointwise property occurs
on manifolds without approximation by invariant subsets with shadowing. The final subsection
gives a complementary realization on general compact metric spaces, showing that
arbitrary Cantor dynamics can be prescribed on the entire set of
two-sided shadowable points, yet again without shadowing.

The proof is organized in two layers.  The Denjoy--Rees construction and
entropy-spectrum obstructions are first combined in
Theorem~\ref{thm:abstract}, which is the general realization mechanism underlying the construction of the special
dynamical plugs.
Explicit local blocks in every dimension $n\geq 2$ are then inserted
into arbitrary manifold homeomorphisms, yielding the density theorem.
The compact-space realization uses a separate layered construction and
complements the manifold mechanism by prescribing the entire shadowable
set.

\subsection{Tools for controlled manifold realizations}\label{sec:manifold-dense}

\subsubsection{The Denjoy--Rees technique}
Given a homeomorphism $S$ of a compact manifold $M$, a measurable
$A\subseteq M$ and a Cantor set $C$, a map \emph{fibered over $S$} is a
bijection
$$h\colon\bigcup_{i\in\Z}S^i(A)\times C\to\bigcup_{i}S^i(A)\times C, \textrm{ so that }
h(x,c)=(S(x),h_x(c)),$$ with each $h_x\in\Homeo(C)$ and $h$ continuous
on $A\times C$. Recall that systems $(X,T)$, $(Y,S)$ are \emph{universally
	isomorphic} if there are invariant Borel sets $X_0\subseteq X$, $Y_0\subseteq
Y$ of full measure for every invariant measure (\emph{universally full})
and a bi-measurable bijection $\Theta\colon X_0\to Y_0$ with
$\Theta\circ T=S\circ \Theta$.

\begin{theorem}[{\cite[Theorem~1.3 and Addendum~1.4]{BCLR}}]\label{thm:BCLR}
	Let $S$ be a homeomorphism of a compact manifold $M$ of dimension at least $2$,
	let $\mu\in\Merg_S(M)$ be aperiodic, and let $A\subseteq M$ satisfy
	$\mu(A)>0$ and $\nu(A)=0$ for every $\nu\in\Merg_S(M)\setminus\{\mu\}$. Let
	$h$ be fibered over $S$ with Cantor fibre $C$. Then there are
	$T\in\Homeo(M)$ and a continuous surjection $\Pi\colon M\to M$ with
	$\Pi \circ T=S\circ \Pi$ such that:
	\begin{enumerate}[label=\textup{(\arabic*)}]
		\item\label{B:uiso} $(M,T)$ is universally isomorphic to
		$$\big(\bigcup_iS^i(A)\times C,\,h\big)\sqcup\big(M\setminus\bigcup_iS^i(A),\,S\big);$$
		\item\label{B:11} $\Pi$ is one-to-one outside $\Pi^{-1}(\Supp\mu)$;
		\item\label{B:trans} if $S$ is transitive on $M$, then $T$ can be chosen
		transitive on $M$;
		\item\label{B:transD} in any case, $T$ is transitive on $\Pi^{-1}(\Supp\mu)$.
	\end{enumerate}
\end{theorem}

We also use that $T$ is a controlled limit of conjugates of $S$.
The following can be derived from \cite[Prop.~3.1 and \S1.5.1]{BCLR}.
\begin{proposition}\label{prop:limit}
	In Theorem~\ref{thm:BCLR} one has 
    $$T_n \xrightarrow{\text{unif.}} T \textrm{ and } T^{-1}_n \xrightarrow{\text{unif.}} T^{-1},$$
     where $T_n=\Psi_n^{-1}\circ S\circ \Psi_n$ for homeomorphisms $\Psi_n$ of $M$. Consequently, $T\in Homeo(M)$. 
	Also $\Psi_x \xrightarrow{\text{unif.}} \Pi$ and the quantities $$\sigma_n:=\dH(T,T_n) \text{ and } \eta_n:=\dC(\Psi_n,\Pi)$$ can be
	prescribed to converge to $0$ as fast as desired. Specifically, after stage $n$ is fixed, the
	whole tail of the  construction is still free and only summability is
	required for convergence.
\end{proposition}

\begin{remark}
	By definition, each map $T_n$ is topologically
	conjugate to $S$ by conjugacy $\Psi_n$. 
	In particular, if $\Lambda$ is an invariant set for $S$ such that $S|_\Lambda$ has the shadowing property, then each $T_n|_{\Psi_n^{-1}(\Lambda)}$
	has the shadowing property.
\end{remark}

\subsubsection{Entropy gaps as an obstruction to shadowing}
We will destroy intrinsic and global shadowing by creating gaps in the spectrum of possible values of entropy. The required obstruction follows from the entropy-approximation theorem of Li and Oprocha \cite[Corollary~C(1)]{LiOp}.

\begin{theorem}\label{thm:LO}
	Let $(X,T)$ be transitive with the shadowing property. Then for every
	$0\le c<\htop(T)$ and every $\eps>0$, the ergodic measures with entropy in
	$[c,c+\eps]$ are weak${}^*$ dense in $\{\mu\in M_T(X):h_\mu(T)\ge c\}$.
\end{theorem}

\begin{corollary}\label{cor:nogap}
	If $(X,T)$ is transitive with shadowing, then for every $0\le c<\htop(T)$
	and $\eps>0$ there exists an ergodic measure with entropy in $(c,c+\eps)$.
\end{corollary}

\begin{proof}
	Fix $c'\in\big(c,\min\{c+\eps,\htop(T)\}\big)$ and $\eps'\in(0,\,c+\eps-c')$.
	Since $c'<\htop(T)$, the variational principle gives an invariant measure of
	entropy at least $c'$, so $\{\mu:h_\mu(T)\ge c'\}\neq\emptyset$. By
	Theorem~\ref{thm:LO} the ergodic measures of entropy in $[c',c'+\eps']$ are
	dense in this nonempty set, so at least one such ergodic measure exists and its
	entropy lies in $[c',c'+\eps']\subset(c,c+\eps)$.
\end{proof}

\begin{remark}
	Corollary~\ref{cor:nogap} is an effective tool in proving that a map does not have the shadowing property. We will use this later.
\end{remark}

The following is \cite[Theorem 4.3]{LiOp}.
\begin{theorem}\label{thm:sh-gap:classes}
	Suppose that $(X, T)$ has the shadowing property and $\mu \in M_T(X)$. If $\operatorname{supp}(\mu)/\sim_T$ is a singleton, then for every $0 \leq c \leq h_\mu(T)$ there exists a sequence of ergodic measures $(\mu_n)_{n=1}^\infty$ supported on almost one-to-one extensions of odometers such that $\lim_{n\to\infty} \mu_n = \mu$ and $\lim_{n\to\infty} h_{\mu_n}(T) = c$.
\end{theorem}

Together, the Denjoy--Rees theorem and the entropy-gap results provide the
two sides of the construction: a controlled extension and a verifiable
obstruction to shadowing. The next theorem packages them with the required
pseudo-orbit control into a reusable manifold-realization criterion.

\subsection{A controlled manifold-realization criterion}\label{sec:abstract}

The next theorem isolates the mechanism that implies local shadowing.
It reduces the manifold realization problem to
	a concrete base structure: a fully supported non-atomic ergodic measure on the central
	class, confinement of pseudo-orbits near that class, and nearby invariant
	shadowing sets that uniformly trace them.

\begin{theorem}\label{thm:abstract}
Let $S$ be a homeomorphism of a compact manifold $M$ with $\dim M\ge2$ and
$\htop(S)<\infty$. Suppose:
\begin{enumerate}[label=\textup{(H\arabic*)}]
\item\label{H:supp} $H(p_0)$ is a chain component of $S$ with
$H(p_0)=\Supp(\mu)$ for some non-atomic ergodic measure $\mu$;
\item\label{H:conf} there is a nested sequence of open sets
$U_n\supset H(p_0)$ 
such that for every $n$
there is $m$ for which every $\tfrac1m$--pseudo-orbit of $S$ meeting
$H(p_0)$ is contained in $U_n$;
\item\label{H:trace} there
are 
invariant sets $H(p_i)$, $i\ge1$\textup, pairwise disjoint and
disjoint from $H(p_0)$  with $B(H(p_i),\zeta_i)\supset H(p_0)$ for some
$\lim_i \zeta_i=0$, and such that each $S|_{H(p_i)}$ has the shadowing property and for every $\eps>0$ there exist
$i\ge1$, $n$, and $\delta>0$
with the property that every $\delta$--pseudo-orbit of
$S$ lying in
$U_n$ and intersecting
$H(p_0)$ is $\eps$--traced by a
point of $H(p_i)$.
\end{enumerate}
Then, for every $\eta>0$, there exist a homeomorphism $T\in\Homeo(M)$
and a continuous surjection $\Pi\colon M\to M$ such that
\[
        \dH(S,T)<\eta,
\]
$\Pi \circ T=S\circ \Pi$, 
$\Pi$ is injective on $M\setminus D$ where
$D:=\Pi^{-1}(H(p_0))$ and the following conditions hold:
\begin{enumerate}[label=\textup{(P\arabic*)}]
\item\label{con:P1} $(M,T)$ does not have the shadowing property;
\item\label{con:P2} $D$ is a transitive chain component of $(M,T)$ and $(D,T|_D)$ does not
have the shadowing property;
\item\label{con:P3} for every $\eps>0$ there is $\delta>0$ such that every
$\delta$--pseudo-orbit of $T$ meeting $D$ is $\eps$--traced by a point of
$M\setminus D$. In particular $D\subset \Sh(T)$.
\end{enumerate}
\end{theorem}

\begin{proof}
Note that since $\mu$ is ergodic,
$S|_{H(p_0)}$ is transitive.
Fix a uniquely ergodic Cantor system $(C,r)$ with
$H_C:=\htop(C,r)>\htop(S)$. It can be done by starting with Bernoulli measure of sufficiently large entropy on a subshift of large alphabet and then applying the Jewett--Krieger theorem. Denote this unique measure by $\alpha$ which also means that
$h_\alpha(r)=H_C$. Let $$A=\{x\in M : x \text{ is a generic point for }\mu \}\subseteq H(p_0).$$
Then $A$ is a Borel set such that $\mu(A)=1$ and $\nu(A)=0$ for every ergodic $\nu\neq\mu$, and $A$ is
$S$-invariant. Apply Theorem~\ref{thm:BCLR} with the aperiodic
ergodic $\mu$, the set $A$, and the product insertion $h=S|_A\times r$. The
construction can be performed in such a way that
$\dH(S,T)<\eta$. However we will need a more controlled approach. In fact in the perturbation  introduced in \cite{BCLR} we can take the rectangles for perturbation sufficiently small, making the convergence arbitrarily fast, which in the summarized statement of Proposition~\ref{prop:limit} means that for any  decreasing sequence $t_n\to 0$, we can require  $$\sigma_n:=\dH(T,T_n)<t_n \text{ and } \eta_n:=\dC(\Pi,\Psi_n)<t_n.$$
We will explain later how fast $t_n$ should decrease to imply the theorem.
Note however, that we can define $t_n$ when $T_n$ and $\Psi_n$ are already known, because
it influences only further steps of the construction.

At this point we
obtain $T$ and $\Pi$ satisfying:
\begin{itemize}
\item $\Pi\circ T=S\circ \Pi$, 
\item $\Pi$ is injective on complement of
$D=\Pi^{-1}(\Supp\mu)=\Pi^{-1}(H(p_0))$,
\item $T|_D$ is transitive (this is a consequence of Theorem~\ref{thm:BCLR}\ref{B:transD}, which holds unconditionally), 
\item $(M,T)$ universally isomorphic to
$$\big(A\times C,\,S\times r\big)\sqcup\big(M\setminus A,\,S\big).$$
\item  $T_n\xrightarrow{unif} T$ such that the is provided by $t_n$ and each $T_n$ conjugated with $S$ by a map $\Psi_n$.
\end{itemize}

Now, uniform continuity gives the existence of some quantifying functions:
\begin{enumerate}
\item A function $\omega_\Pi\colon (0,\infty)\to (0,\infty)$ such that every $\delta$-pseudo-orbit for $T$ is an $\omega_\Pi(\delta)$-pseudo-orbit for $S$. In addition, $\omega_\Pi(\delta)\to0$ as $\delta\to0$.
\item Symmetrically, for each $n$ we can obtain a function $\omega_{\Psi_n}\colon(0,\infty)\to(0,\infty)$ that quantifies how $\Psi_n$ transform pesudo-orbits of $T_n$ into pseudo-orbits of $S$. Precisely, if $\delta>0$ and $(x_i)$ is a $\omega_{\Psi_n}(\delta)$-pseudo-orbit for $T_n$, then $(\Psi_n(x_i))$ is  a $\delta$-pseudo-orbit for $S$.
\end{enumerate}
Observe that if $\delta_n\to0$, then $\omega_\Pi(\delta_n)\to0$ and hence $\Pi(x) \sim_S \Pi(y)$, provided  $x\sim_T y$. Since  $T|_D$ is transitive, we get that  all points in $D$ are $\sim_T$ related.
On the other hand, no point $x\in D$ is chain-related to a point $z\notin D$. Otherwise, $\Pi (z)\notin
H(p_0)$ and $\Pi (z)\sim_S \Pi(x)\in H(p_0)$, contradicting the assumption that $H(p_0)$ is a chain
component. Consequently, $D$ is a chain component of $(M,T)$, thereby proving  first statement in \ref{con:P2}.

The universal isomorphism is
defined on universally full invariant sets and commutes with the dynamics,
so it induces an entropy-preserving and ergodicity-preserving bijection between the
invariant measures of $(M,T)$ and those of the model. Restricting to the
invariant set $D$, the ergodic measures of $(D,T|_D)$ correspond to those of
$\big(A\times C,S\times r\big)\sqcup\big(H(p_0)\setminus A,S\big)$. If
an ergodic measure on the second piece corresponds to an $S$--measure $\nu$ on $H(p_0)$ giving
$A$ measure $0$, i.e. distinct from $\mu$, its entropy satisfies
$$h_\nu(T)\le\htop(S|_{H(p_0)})\le\htop(S),$$ here for simplicity, with a small abuse of notation we view $\nu$ as a measure for $T$. 

An ergodic measure $\nu$ of the first
piece projects under $\pi_C$ to the uniquely ergodic $(C,r)$ system, so
$(\pi_C)_*\nu=\alpha$ and consequently
$h_\nu\ge h_\alpha(r)=H_C$. Since $H_C>\htop(S)$, by the above discussion, there is no ergodic measure $\eta$ of $T|_D$ satisfying 
$h_{\eta}\in \big(\htop(S),H_C\big)$. Besides,  $$\htop(T|_D)=h_\mu(S)+H_C\ge H_C.$$ Namely, the maximum is attained by the lift
of the independent joining $\mu\times\alpha$, and no other joining has entropy exceeding $h_\mu(S)+H_C$.
Now, $T|_D$ is transitive and cannot have the shadowing property. Otherwise, since $\htop(S)<\htop(T|_D)$, for $c=\htop(S)$ and any $\eps>0$ so that $\htop(S)+\eps<H_C$, Corollary~\ref{cor:nogap}
gives an ergodic measure of $T|_D$ with entropy in
in the interval $(\htop(S),H_C)$. This contradiction completes the
proof of \ref{con:P2}.

Now, we turn our attention to \ref{con:P1}. We first note that the above entropy-gap is global. Indeed, under the universal
isomorphism, every ergodic measure of $T$ corresponds to one of the following cases:
\begin{itemize}
    \item A measure of $(A\times C,S\times r)$ with entropy $\ge H_C$.
    \item A measure of $(M\setminus A,S)$ with  entropy $\le\htop(S)$.
\end{itemize}    
Therefore,  no ergodic measure of
$(M,T)$ has entropy in $(\htop(S),H_C)$.
Since $D$ is a chain component, we have that $D/{\sim_T}$ is a singleton; its
 measure of maximal entropy $\bar\mu$ on $D$  satisfies that $\supp(\bar\mu)/{\sim_T}$ is a
singleton.
If $T$ had shadowing, then Theorem~\ref{thm:sh-gap:classes} applied to
$\bar\mu$ and any $c\in(\htop(S),H_C)$ would produce ergodic measures of $T$
with entropy converging to $c$, hence eventually inside $(\htop(S),H_C)$. This is a contradiction, proving \ref{con:P1}.

It remains to prove \ref{con:P3}. It will heavily depend on the choice of the sequence  $t_n$.
As $T_1$ we just take $T_1=S$.
Now fix $n$ and assume that $T_n$ is already defined.
Since $\Psi_n^{-1}$ is uniformly continuous, choose $\eps_n\in(0,\frac{1}{4n})$ such that
\begin{equation}\label{eq:modulus}
d(a,b)<2\eps_n \quad\Longrightarrow\quad d(\Psi_n^{-1}(a),\Psi_n^{-1}(b))<\tfrac{1}{2n}.
\end{equation}
By \ref{H:trace} applied with accuracy $\eps_n$ there are an index $j_n\ge1$, a neighbourhood index $k_n$, and $\delta_n>0$ such that every $\delta_n$--pseudo-orbit of $S$ lying in $U_{k_n}$ and meeting $H(p_0)$ is $\eps_n$--traced by a point of $H(p_{j_n})$.
By \ref{H:conf} applied to $U_{k_n}$ there is $m_n$ such that every $\tfrac1{m_n}$--pseudo-orbit of $S$ meeting $H(p_0)$ is contained in $U_{k_n}$.
Take any $\gamma_n<\min\{\frac{1}{m_n},\delta_n\}$.
Let $W_1,\ldots, W_s$ be an open cover of $H(p_{j_n})$ such that $\diam \Psi_{n}^{-1}(W_i)<\eps_n$.
Let $\lambda_n>0$ be a Lebesgue number of this cover.
Decrease $t_n$ if necessary, so that
$t_n<\min\{\eps_n,\lambda_n\}$, which will have impact on forthcoming perturbations, since by assumption  $\sigma_n,\eta_n<t_n$.

Before going further, first we claim that under these assumptions,
\begin{equation}\label{eq:near-fibres}
z\in H(p_{j_n}) \quad \Longrightarrow \quad d(\Pi^{-1}(z),\Psi_n^{-1}(z))<\eps_n.
\end{equation}
So assume otherwise, and let $z$ be such a point. Let $p=\Psi_n(\Pi^{-1}(z))$. Since $z=\Pi(\Pi^{-1}(z))$, we have
$$d(p,z)=d(\Psi_n(\Pi^{-1}(z)),\Pi(\Pi^{-1}(z)))\le\eta_n<\lambda_n,
$$
which means there is $r$ such that $p,z\in W_r$. But then $\Psi_n^{-1}(z),\Pi^{-1}(z)\in \Psi_n^{-1}(W_r)$, whose diameter is smaller than $\eps_n$, contradicting $d(\Pi^{-1}(z),\Psi_n^{-1}(z))\ge\eps_n$. This proves the claim.

We construct the sequence $t_n$ recursively in each step of the construction, which can be done by virtue of Proposition~\ref{prop:limit}: once stage $n$ is reached, $\eps_n,\delta_n,m_n,\gamma_n,\lambda_n$ are all determined while the tail of the construction is still free, so the bounds $\sigma_n,\eta_n<t_n$ required above can indeed be met.
Under the above assumptions, we are ready for the proof.

Fix any $\eps>0$ and let $n$ be sufficiently large so that $\frac{1}{n}<\eps$. Let $\delta>0$ be small enough, so that $\omega_\Pi(\delta)\le\gamma_n$. Fix any a $\delta$-pseudo-orbit  $(x_i)$ for $T$ meeting $D$, say $x_{i_0}\in D$.
Since $\Pi\circ T=S\circ\Pi$, the sequence $(\Pi(x_i))$ is an $\omega_\Pi(\delta)$-pseudo-orbit for $S$, hence a $\gamma_n$-pseudo-orbit. In particular it is both a $\tfrac1{m_n}$-pseudo-orbit and a $\delta_n$-pseudo-orbit. Moreover it meets $H(p_0)$, since $\Pi(x_{i_0})\in H(p_0)$. By \ref{H:conf} it is contained in $U_{k_n}$, and hence by \ref{H:trace} there is a point $\bar z\in H(p_{j_n})$ which $\eps_n$--traces $(\Pi(x_i))$ for $S$, that is,
$$ d(S^i(\bar z),\Pi(x_i))\le\eps_n \qquad \text{ for all }i\in\Z. $$

Recall that $\Pi$ has a well defined inverse  on $M\setminus \Pi(D)$.
Hence, there is a unique point $z:=\Pi^{-1}(\bar z)\in M\setminus D$, since $\bar z\in H(p_{j_n})$ is disjoint from $H(p_0)=\Pi(D)$. The point
$w:=\Psi_n^{-1}(\bar z)$ is also uniquely defined.
Denote $y_i:=\Psi_n(x_i)$ for each $i\in \Z$, that is $\Psi_n^{-1}(y_i)=x_i$,  and observe that
\begin{equation} d(S^i(\bar z),y_i)\le d(S^i(\bar z),\Pi(x_i))+d(\Pi(x_i),\Psi_n(x_i))\le\eps_n+\eta_n<2\eps_n.
	\label{eq:3-epsn}
\end{equation}
Note that
$$
\Pi(T^i(z))=S^i(\Pi(z))=S^i(\bar z)=S^i(\Psi_n(w))=\Psi_n(T_n^i(w)).
$$
Hence,   $$T^i(z)=\Pi^{-1}(S^i(\bar z)) \text{ and } T_n^i(w)=\Psi_n^{-1}(S^i(\bar z)).$$
But  $S^i(\bar z)\in H(p_{j_n})$, thus \eqref{eq:near-fibres} gives $d(T^i(z),T_n^i(w))<\eps_n$.
Additionally, by \eqref{eq:modulus} and \eqref{eq:3-epsn} we get
$$
d(T_n^i(w),x_i)=d(\Psi_n^{-1}(S^i(\bar z)),\Psi_n^{-1}(y_i))<\tfrac1{2n}.
$$
This leads to the following estimate
\begin{eqnarray*}
d\big(T^i(z),x_i\big)&\le& d\big(T^i(z), T_n^i(w)\big)+d(T_n^i(w),x_i)< \eps_n+\frac{1}{2n}<\frac{1}{n}<\eps.
\end{eqnarray*}
This shows that the point $z$, which by definition satisfies $z\in M\setminus D$, $\eps$--traces $(x_i)$.
In particular, any $\delta$-pseudo-orbit through a point $x\in D$ meets $D$, hence is $\eps$-traced; as $\eps>0$ was arbitrary, $D\subseteq\Sh(T)$.
This completes the proof of \ref{con:P3}.
\end{proof}

\subsection{A Plykin-type building block on spheres}\label{sec:plykin-block}

To apply the realization criterion, we require a local modification:a plug on the $n$-dimensional cube that leaves the boundary fixed. Such a plug allows us to insert a the desired dynamics into a given system without altering the dynamics outside the plug region. A crucial requirement for constructing this plug is that the systems under consideration are smoothly isotopic to the identity. As we shall see later, this isotopy property ensures that the plug can be seamlessly integrated into the ambient manifold. A classical example that provides the necessary utility is the Plykin attractor on the sphere $\mathbb{S}^2$ (see \cite[Example VII.9.1]{Robinson}), say given by a diffeomorphism $F_2$.

Recall that, for any given compact manifold $M$, the set of $C^\infty$-diffeomorphisms are $C^1$-dense in $\Diff^1(M)$ (see \cite[Ch.~2]{Hirsch}) and, also, that structural stability is a $C^1$-open condition. In this way, we may assume that the Plykin diffeomorphism $F_2$ belongs to $\Diff^\infty(\bbS^2)$, replacing it, if necessary, by a sufficiently $C^1$-close $C^\infty$ diffeomorphism. The resulting diffeomorphism  is then structurally stable and topologically conjugate to the original one.
The conjugacy preserves the shadowing property, carries non-wandering set onto non-wandering set, basic sets to basic sets, attractors to attractors, and preserves topological dimension. Since the smooth diffeomrphism $F_2$ is obtained as an arbitrarily small perturbation of
$C^1$ diffeomorphism in $C^1$-topology, the orientation and hyperbolicity are preserved, in particular the one dimensional attractor remains expanding and the fixed points remain sources.

A generalization of this example to a diffeomorphism $F_n\in \Diff^{\infty}(\bbS)$ in higher dimensions is provided by \cite[ Proposition~1]{MZ}. While the idea
is clear, the given proof is unfortunately incomplete. Precisely, a proof is provided for $n=3$, together with the comment that
for higher dimensions it can be done by an easy induction. However, the passage from $\bbS^{n}$ to $\bbS^{n+1}$
($n\geqslant 2$) as presented in \cite{MZ} requires a
\textit{diffeotopy} from the diffeomorphism constructed at the previous stage to the identity. Remember that, for $1\leqslant r\leqslant\infty$, a
$C^r$ diffeotopy is a $C^r$ map $H\colon\bbS^{n}\times[0,1]\to\bbS^{n}$ such that
every $H_t:=H(\cdot,t)$ is a diffeomorphism. Equivalently in the terminology of \cite[Chapter~8]{Hirsch}, its
\textit{track}, that is, the map $\Phi_H(x,t)=(H(x,t),t)$, is a $C^r$ diffeomorphism of $\bbS^{n}\times[0,1]$.

In \cite{MZ} this diffeotopy to the identity is supplied explicitly only for the base map on $\bbS^2$: a Plykin diffeomorphism of $\bbS^2$ preserves orientation, and $\Diff^+(\bbS^2)$ is path-connected by Smale's theorem \cite{Sm59}, so it is diffeotopic to $\id_{\bbS^2}$. The higher-dimensional cases are then asserted by analogy (the authors in \cite{MZ} say ``continuing in this way''). Each passage $\bbS^n\to\bbS^{n+1}$, however, again feeds the diffeomorphism constructed at stage $n$ into the tubular-neighborhood formula, and hence requires a diffeotopy from that $\bbS^n$-diffeomorphism to $\id_{\bbS^n}$. Orientation preservation no longer guarantees the existence of such a diffeotopy once $n\ge 6$. In particular, Milnor proves in \cite[Theorem~5]{Milnor} that there exists a diffeomorphism $f:\bbS^6 \to \bbS^6$ of degree $1$ which is
not differentiably ($C^\infty$) isotopic to the identity. This obstruction persists in the $C^1$ category.
Indeed, the inclusion $\Diff^\infty(\bbS^{n})\subset\Diff^1(\bbS^{n})$ is a weak homotopy equivalence
\cite[Lemma~1.6 and Theorem~1.5(1)]{KhM}; hence a $C^1$ diffeotopy from $f$ to the identity implies, on the
level of $\pi_0$, that $f$ and $\id$ are joined by a \emph{continuous} path in $\Diff^\infty(\bbS^{6})$, and such
a path, with the endpoints kept fixed, can be replaced by a $C^\infty$ diffeotopy using the relative
parametrized version \cite[Theorem~1.8(2)]{KhM}, applied exactly as in the proof of \cite[Theorem~1.9]{KhM}.
This contradicts Milnor's theorem and shows that his diffeomorphism is not $C^1$-diffeotopic to the
identity either; thus working with $C^1$ diffeomorphisms, as \cite{MZ} does, does not
remove the difficulty.
The proposition itself is nevertheless correct. The repair is to strengthen the
inductive statement. We present a detailed argument because this construction is used in the subsequent local models.
\begin{theorem}\label{thm:Plykin}
	For every $n\geqslant 2$ there is a diffeomorphism $F_n\colon\bbS^{n}\to\bbS^{n}$ such that:
	\begin{enumerate}
		\item\label{thm:Plykin:1} $F_n$ is structurally stable;
		\item\label{thm:Plykin:2} $\Omega(F_n)$ is the union of a one-dimensional nonorientable expanding
		attractor $\Lambda_n$ and finitely many hyperbolic periodic points, at least one of
		which is a fixed source;
		\item\label{thm:Plykin:3} $F_n$ is $C^\infty$-diffeotopic to $\id_{\bbS^{n}}$.
	\end{enumerate}
\end{theorem}
\begin{proof}
	We will prove the theorem by induction on $n$. In what follows, when we write smooth diffeomorphism or map, we aways mean $C^\infty$-regularity.
	The Plykin's diffeomorphism $F_2\colon\bbS^2\to\bbS^2$ (see \cite[Example~VII.9.1]{Robinson}) is structurally stable, and
	$\Omega(F_2)$ consists of a one-dimensional nonorientable expanding attractor and four
	hyperbolic source fixed points. In the standard model $F_2$ preserves orientation.
	As we explained before, we may assume that $F_2\in\Diff^\infty(\bbS^2)$.
	We can apply Smale's theorem \cite[Theorem~6]{Sm59}, obtaining that $F_2$ is diffeotopic
	to $\id_{\bbS^2}$.

	Next, assume that theorem is proved for some $n\geq 2$, and fix $F_n$ such that conditions \eqref{thm:Plykin:1}--\eqref{thm:Plykin:3}
	are satisfied. Let $H\colon\bbS^{n}\times[0,1]\to\bbS^{n}$ be a diffeotopy
	such that $H_0=F_n$ and $H_1=\id_{\bbS^n}$ provided by \eqref{thm:Plykin:3}.

	Regard $$\bbS^{n+1}=\{x : ||x||=1\}\subset\R^{n+2}$$ as the unit sphere and let
	$\bbS^n\times \{0\}$ be its equator with the generalized north/south poles $$N=(0,\dots,0,1) \text{ and } S=(0,\dots,0,-1).$$
	We define a  smooth tangent vector field $X$ on $\bbS^{n+1}$ as follows. We first  write
	$x=(y,t)\in\R^{n+1}\times\R$, where $t=x_{n+2}$ and 
	$\lVert y\rVert^2+t^2=1$, and then define
	\[
	 X(y,t)=\bigl(t^2y,-t(1-t^2)\bigr).
	\]
	So, $X$ is tangent to the sphere.Indeed,
	\[
	 \langle X(y,t),(y,t)\rangle
	 =t^2\lVert y\rVert^2-t^2(1-t^2)=0.
	\]
	  Its zero set is
	$\{N,S\}\cup(\bbS^n\times\{0\})$, its nonconstant trajectories are
	the meridian arcs, and along every trajectory the last coordinate satisfies
	$\dot t=-t(1-t^2)$. Let $\Gamma=(\Gamma_t)$ be the flow of generated by $X$, let 
	$G:=\Gamma_1$ be its time-one map and let $g(t)$ be the time-one map of the flow induced by the equation $\dot t=-t(1-t^{2})$.
	Linearizing $\dot t=-t(1-t^2)$ one finds that $N,S$ are
	hyperbolic sources of $g$ and
	that the equator attracts at the uniform rate $e^{-1}$.

	The map $$\Theta\colon\bbS^{n}\times(-1,1)\to\bbS^{n+1}\setminus\{N,S\}, \text{ given by }
	\Theta(z,t)=(\sqrt{1-t^{2}}\,z,\,t)$$ is a smooth diffeomorphism carrying the product
	foliation to the leafs of meridian foliation.
	Fix small $\delta\in (0,\frac{1}{8})$ and let $Q:=\Theta\big(\bbS^{n}\times[-\tfrac12,\tfrac12]\big)$
	be a tubular neighborhood of equator in $\bbS^{n+1}$.
	Finally, let $\theta\colon (-\frac{3}{4},\frac{3}{4})\to [0,1]$ be a smooth surjective even function, nondecreasing in $|t|$, such that $\theta(t)=0$ for $|t|\leq \delta$ and $\theta(t)=1$ for $|t|\geq \frac{1}{2}-\delta$.

	Define $F_{n+1}\colon\bbS^{n+1}\to\bbS^{n+1}$ by
	$$
	F_{n+1}(x)=
	\begin{cases}
		\Theta\big(H_{\theta(t)}(z),g(t)\big), & x=(z,t)\in Q\\
		G(x), & x\notin Q.
	\end{cases}
	$$
	Note that, by the definition, on the set $$\Theta\left(\bbS^{n},\left[\frac{-1}{2},\frac{-1}{2}+\delta\right]\cup \left[\frac{1}{2}-\delta,\frac{1}{2}\right]\right), \text{ one has } F_{n+1}(x)=G(x).$$
	This shows that $F_{n+1}$ is a smooth diffeomorphism as a composition of smooth diffeomorphisms.
	It is also clear that
	$$
	\Omega(F_{n+1})=(\Omega(F_n)\times\{0\})\cup\{N,S\}.
	$$
	By definition
	$N$ and $S$ are hyperbolic sources of $F_{n+1}$.
	On $Q_\delta:=\Theta(\bbS^{n}\times[-\delta,\delta])$ we have
	\begin{equation}\label{eq:product}
		F_{n+1}(z,t)=\Theta(F_n(z),g(t)).
	\end{equation}
	Therefore, along the equator, we have
	$$DF_{n+1}(z,0)=DF_n(z)\oplus\hat g'(0), \text{ with }
	\hat g'(0)=e^{-1}<1.$$ 
    Thus, for $x\in\Omega(F_n)$ the hyperbolic splitting of $F_{n+1}$ is given by
	\[
	E^{s}_{F_{n+1}}=E^{s}_{F_n}\oplus\nu,\qquad E^{u}_{F_{n+1}}=E^{u}_{F_n},
	\]
	where $\nu$ is the trivial and $DF_{n+1}$-invariant normal line bundle of the equator.
	By \eqref{eq:product} it is clear that $\Lambda_{n+1}=\Lambda_n$ is the only nondegenerate attractor of $F_{n+1}$
	and the number of sources/saddles increased by $2$ compared to $F_n$. In particular, periodic points are dense in $\Omega(F_{n+1})$.
	It is clearly one-dimensional and expanding by the construction. Since $\Lambda_n$ was nonorientable, so is $\Lambda_{n+1}$.
	This all together, shows that $\Omega(F_{n+1})$ is hyperbolic and $F_{n+1}$ is an axiom~A diffeomorphism.

	We claim that $F_{n+1}$ satisfies strong transversality condition, which together with Axiom A implies its
	structural stability (e.g. see \cite[Theorem~IX.9.2]{Robinson}).
To prove the claim, fix any two basic sets $\xi,\eta$ of $F_{n+1}$ and suppose
	$W^{u}(p)\cap W^{s}(q)\neq\varnothing$ for some $p\in\xi$, $q\in\eta$. If $\xi\in\{N,S\}$ then
	$W^{u}(p)$ is open and any intersection is transversal. If $\xi\subset \bbS^n\times \{0\}$  and
	$\eta\in\{N,S\}$ then $W^{s}(q)=\{q\}$ misses $W^{u}(p)\subset g^{-1}(\{0\})$, so this case is excluded. If both $\xi,\eta$ belong to the equator $\bbS^n\times\{0\}$
	every intersection point $x=(z,0)$ satisfies
	$z\in W^{u}_{F_n}(p')\cap W^{s}_{F_n}(q')\subset\bbS^n$ for $p=(p',0)$, $q=(q',0)$.
	Since $F_n$ is a structurally stable diffeomorphism, it satisfies the strong transversality condition, as first proved by Ma\~n\'e (see \cite[Theorem~X.4.3 and Remark~X.4.5]{Robinson}).
	But $$W^s(F_{n+1})(q)=W^s(F_n)(q')\oplus \nu,$$ where $\nu$ is direction transversal to equator, proving strong transversality condition
	for $F_{n+1}$, hence proving the claim.

	Finally, for $s\in[0,\frac{1}{2}]$ set
	$\sigma_s(t):=(1-\theta(s))\theta(t)+\theta(s)$ and define
	\[
	F^{(s)}(x)=
	\begin{cases}
		\Theta\big(H_{\sigma_s(t)}(z),g(t)\big), & x=(z,t)\in Q\\
		G(x), & x\not\in Q .
	\end{cases}
	\]
	Since $\sigma_s(t)=1$ for every $|t|\geq \frac12-\delta$ and every $s$, each
	$F^{(s)}$ glues smoothly with $G$, so each $F^{(s)}$ is a
	diffeomorphism and clearly the family is smooth in $(s,x)$ jointly. Now
	$\sigma_0=\theta$ gives $F^{(0)}=F_{n+1}$, while $\sigma_{\frac{1}{2}}\equiv1$ gives $F^{(\frac{1}{2})}=G$ on $Q$, hence
	$F^{(\frac{1}{2})}=G$ globally. Additionally, observe that $\sigma_s\equiv 1$ for $s$ close to $\tfrac12$ and $\theta(1-s)=1$ for $s$ close to $\tfrac12$, $\theta(1-s)=0$ for $s$ close to $1$, so the concatenation below is smooth in $s$. Concatenating $s\mapsto F^{(s)}$ with the flow path
	$(x,s)\mapsto \Gamma(x,\theta(1-s))$ for $s\in [\frac{1}{2},1]$ yields
	a $C^\infty$ diffeotopy from $F_{n+1}$ to $\Gamma_0=\id_{\bbS^{n+1}}$. This proves \eqref{thm:Plykin:3}, completing the proof.
\end{proof}

\subsection{Local models on cubes in dimensions at least three}\label{sec:cube-models}

The aim of this subsection is to verify the three hypotheses of
Theorem~\ref{thm:abstract} in an explicit local model. The central fibre
carries the Plykin attractor and the neighboring fibres reproduce its shadowing
dynamics and trace pseudo-orbits through the central class. Radial dynamics prevents these pseudo-orbits from escaping. This yields the local realization
in Theorem~\ref{thm:example}.

For simplicity of notation, we fix $n\geq 2$ and will perform the construction
on $\bbS^n\times [-1,1]$ and then collapse one of the spheres to a point.
First we will define a map $\gamma$ which will be responsible for dynamics on lines transversal to $\bbS^n$.

Fix a decreasing sequence $(a_i)\subset(0,1)$ with $a_i\to0$. Let
$\gamma\in\Homeo([-a_1,a_1])$ be the increasing homeomorphism with
\[
\Fix(\gamma)=\{0\}\cup\{\pm a_N:N\ge1\}.
\]
Its nonzero fixed points are alternately attracting and repelling, and
for $x\ge0$ the map is given by
$$
\gamma|_{[a_{2n+1},a_{2n}]}(x)=\frac{(x-a_{2n+1})^2}{a_{2n}-a_{2n+1}}+a_{2n+1},
\qquad
\gamma|_{[a_{2n},a_{2n-1}]}(x)=\frac{(x-a_{2n-1})^2}{a_{2n}-a_{2n-1}}+a_{2n-1},
$$
and by $\gamma(-x)=-\gamma(x)$ for $x>0$. A direct computation gives $\gamma<\mathrm{id}$ on
each interval $(a_{2n+1},a_{2n})$ and $\gamma>\mathrm{id}$ on each interval $(a_{2n},a_{2n-1})$.

For a connected component $J=(p,q)$ of $[-a_1,a_1]\setminus\Fix(\gamma)$ we set
$$
\beta_J:=\max_{t\in\overline J}|\gamma(t)-t|=|\gamma(u_J)-u_J|>0,
$$
where $u_J\in J$ is a point of maximal displacement. We call $J$ a \emph{down-gap} if
$\gamma<\mathrm{id}$ on $J$ and an \emph{up-gap} if $\gamma>\mathrm{id}$ on $J$. Recall that a
sequence $(x_i)$ is a \emph{$\delta$-pseudo-orbit} of $\gamma$ if
$|x_{i+1}-\gamma(x_i)|\le\delta$ for all $i$.

\begin{lemma}\label{lem:gamma}
With $\gamma$ as above:
\begin{enumerate}[label=\textup{(\alph*)}]
\item Consecutive gaps (those sharing a common endpoint $a_N$) alternate between down-gaps and
up-gaps; by oddness of $\gamma$, gaps of both types occur at levels arbitrarily close to $0$
on each side of $0$.
\item Let $J$ be a gap on the positive side, so that $u_J>0$, and let $0<\delta<\beta_J$.
If $(x_i)$ is a $\delta$-pseudo-orbit of $\gamma$ with $x_0\in[-u_J,u_J]$, then
  \begin{itemize}
  \item\label{lem:gamma:a} if $J$ is a down-gap, $x_i\in[-u_J,u_J]$ for every $i\ge0$;
  \item\label{lem:gamma:b} if $J$ is an up-gap, $x_i\in[-u_J,u_J]$ for every $i\le0$.
  \end{itemize}
A gap on the negative side reduces to this case by replacing $J$ with $-J$, which has the
opposite type and the same $\beta$ and $|u_J|$.
\end{enumerate}
\end{lemma}

\begin{proof}
The proof is an easy calculation; details are left to the reader.
\end{proof}

We are now ready to assemble the example. Let $F_n\colon\bbS^n\to\bbS^n$ be the
diffeomorphism provided by Theorem~\ref{thm:Plykin}, and let $\Lambda$ be its one-dimensional expanding attractor.
Since $\Lambda$ is a basic set, there exists
an ergodic measure $\mu$ with $\Supp\mu=\Lambda$. In particular
$F_n|_\Lambda$ is transitive and $\mu$
is non-atomic. By result of \cite[Theorem~1]{Ito}, we have $\htop(F_n)<\infty$, because it is a diffeomorphism.
Extend $\gamma$ to an increasing homeomorphism of $[-1,1]$ with
$\gamma(-1)=-1$ and $\gamma(1)=1$ and no fixed point in $(a_1,1)$ nor $(-1,-a_1)$ and both $1,-1$ are attractors.
Define homeomorphism $F$ of $\bbS^n\times [-1,1]$ in the following way. First we put:
$$F|_{\bbS^n\times [-a_1,a_1]}:=F_n\times \gamma|_{[-a_1,a_1]}.$$
Next, let $H\colon \bbS^n\times [a_1,1]\to \bbS^n$ be a smooth diffeotopy between $H_{a_1}=F_n$ and $H_1=\id_{\bbS^n}$.
Then we define $$F(x,t)=(H(x,|t|),\gamma(t)), \text{ for } (x,t)\in \bbS^n\times [-1,-a_1]\cup [a_1,1].$$
It is clear that $F$ is well defined and the identity on the boundary of $\bbS^n\times [-1,1]$ (which is attracting).
Define the compact manifold
\[
  M:=\bigl(\bbS^n\times[-1,1]\bigr)\big/\bigl(\bbS^n\times\{-1\}\sim *\bigr),
\]
by collapsing the external sphere $\bbS^n\times\{-1\}$ to a single point $*$. Since the cone on $\bbS^n$ is
the ball $D^{n+1}$ with the apex an interior point, $M$ is homeomorphic to the $(n+1)$-dimensional cube,
with interior fixed point $*$ and boundary $\partial M=\bbS^n\times\{1\}$.
Clearly $F$ induces quotient map $S$ on $M$, which is a homeomorphism.
On the closed cylinder $\bbS^n\times[-a_1,a_1]$ map $S$ equals
$F_n\times\gamma$.
Observe $\dim M=n+1\ge3$ and $\htop(S)=\htop(F)<\infty$, because $F$ is identity in the collapsed fibre. In addition, the 
sets
$$
  H(p_0):=\Lambda\times\{0\},\qquad H(p_i):=\bbS^n\times\{a_i\}\ \ (i\ge1),\qquad
  U_k:=\bbS^n\times(-\tfrac1k,\tfrac1k),
$$
are all contained in the interior cylinder, away from $*$ and $\partial M$, so their dynamics is not affected by the extension of $\gamma$, neither by the collapse.
In what follows, we assume that the product $\bbS^n\times [-1,1]$ is endowed with the maximum metric and that this metric
coincides with metric on $M$ within the cylinder $\bbS^n\times [-a_1,a_1]$.

\begin{lemma}\label{prop:example}
Dynamical system $(M,S)$ satisfies hypotheses \ref{H:supp}--\ref{H:trace} of
Theorem~\ref{thm:abstract} with the data above.
\end{lemma}

\begin{proof}
The map $S|_{H(p_0)}$ is conjugate to $F_n|_\Lambda$ via $(x,0)\mapsto x$,
hence transitive, and so chain transitive. To see that $H(p_0)$ is a whole chain component,
suppose $w\in M$ is chain equivalent to some $(\xi,0)$ with $\xi\in\Lambda$. The map $\gamma$ is
gradient-like, so its chain components are the singletons $\{0\}$ and $\{\pm a_N\},\{\pm 1\}$; in
particular $0$ is chain related to no other fixed level, so no chain issued from level $0$ can
reach the levels $\pm a_1$ or $\pm 1$. Hence $w$ lies in the
interior cylinder, where the product structure holds, say $w=(x_w,t_w)\in \bbS^n\times [-a_1,a_1]$; projecting the chains to the two factors
gives $t_w\sim_\gamma 0$, so $t_w=0$, and $x_w\sim_{F_n}\xi$, so $x_w\in\Lambda$. Thus the chain component is exactly
$\Lambda\times\{0\}$. Finally $H(p_0)=\Supp(\mu\times\delta_0)$, and $\mu\times\delta_0$ is
non-atomic and ergodic. This proves that \ref{H:supp} is satisfied.

The sets $U_k$ are open, nested and contain $H(p_0)$. Fix any $k$.
By
Lemma~\ref{lem:gamma}\eqref{lem:gamma:a} there are a down-gap $J_d$ and an up-gap $J_u$ inside $(0,\frac{1}{k})$. Put
$\beta:=\min\{\beta_{J_d},\beta_{J_u}\}>0$ and pick $m$ with $\frac{1}{m}<\beta$.
Let
$(z_j)=(x_j,t_j)_{j\in \Z}$ be a $\frac{1}{m}$-pseudo-orbit meeting $H(p_0)$, say $t_{j_0}=0$, $x_{j_0}\in \Lambda$.
The intervals $[-u_{J_d},u_{J_d}]$ and $[-u_{J_u},u_{J_u}]$ lie in
$(-\frac{1}{k},\frac{1}{k})\subset(-a_1,a_1)$ and, by Lemma~\ref{lem:gamma}\eqref{lem:gamma:b}, trap the $t_j$-coordinate forward
(via $J_d$) and backward (via $J_u$) starting from $t_{j_0}=0$. Thus the pseudo-orbit $(z_j)$ never leaves the
set $U_k$, proving that \ref{H:conf} holds.

The sets $H(p_i)=\bbS^n\times\{a_i\}$ are invariant, pairwise disjoint
disjoint from $H(p_0)$ and, for each $i$, satisfy:
\begin{itemize}
\item $B(H(p_i),\zeta_i)\supset H(p_0)$, where $\zeta_i:=2a_i$ and $a_i\to0$.
\item $S|_{H(p_i)}(x,t)=(F_n(x),t)$ has the shadowing property.
\end{itemize}
Furthermore, $\delta$ chosen for $\eps$
by the shadowing property depends only on $F_n$, and hence is independent
of $j$.
Given $\eps>0$, choose $i$ with $a_i<\frac{\eps}{2}$ and $k$
with $\frac{1}{k}<\frac{\eps}{2}$, and let $\delta>0$ be such that every $\delta$-pseudo-orbit of $F_n$ is $\eps$-traced. Let
$(z_j)=((x_j,t_j))$ be a $\delta$-pseudo-orbit lying in $U_k$ and meeting $H(p_0)$. Then, since $S$ is the product map in the interior cylinder, $(x_j)$
is a $\delta$-pseudo-orbit of $F_n$. There exists $z\in\bbS^n$ with $d_{\bbS^n}(F_n^j(z),x_j)\le\eps$
for all $j$. The point $(z,a_i)\in H(p_i)$ satisfies $$S^j(z,a_i)=(F_n^j(z),a_i) \text{ and }
|a_i-t_j|\le a_i+\frac{1}{k}<\eps,$$ whence
\[
  d\big(S^j(z,a_i),(x_j,t_j)\big)=\max\{d_{\bbS^n}(F_n^j(z),x_j),\,|a_i-t_j|\}\le\eps .
\]
Thus $(z_j)$ is $\eps$-traced by a point of $H(p_i)$, completing the proof of \ref{H:trace} and finishing the proof of Lemma.
\end{proof}

Lemma~\ref{prop:example} together with Theorem~\ref{thm:abstract} yields the following.

\begin{theorem}\label{thm:example}
Let $n\geq 3$ and let $M$ be an $n$-dimensional cube. There exists a homeomorphism $T\in \Homeo(M)$
and an uncountable set $D\subset \interior M$ such that:
\begin{enumerate}
\item $D$ is a chain component such that $T|_D$ is transitive and $D\subseteq\Sh(T)$;
\item  none of the maps $(M,T)$, $(D,T|_D)$ has the shadowing property;
\item $T_{\partial M}=\id_{\partial M}$;
\item $\htop(T)<\infty$.
\end{enumerate}
\end{theorem}

Let us note that the set $D$ is indeed uncountable: by
Theorem~\ref{thm:BCLR}\ref{B:uiso}, up to a universally full set, $D$
contains a copy of the product $A\times C$, where $C$ is a Cantor set.

\begin{remark}
Observe that the tracing mechanism used to prove \ref{con:P3} requires only the uniform $\eps_i$-tracing of $\delta_i$-pseudo-orbits appearing in \ref{H:trace}. It does not require the restrictions to the sets $H(p_i)$ themselves to retain the shadowing property.
	Therefore, by a careful, consecutive applications of Theorem~\ref{thm:BCLR} with sufficiently fast decreasing range of perturbation,
	it is possible to extend the statement of Theorem~\ref{thm:example} by consecutive blow-up perturbations performed on sets $H(p_n)$.
	This way, we can destroy shadowing on each $H(p_i)$. While we cannot get rid of hyperbolic sinks, sources this way, we can prove that $D$ has a neighborhood
	$V$ such that if $D'\cap V$ is a chain-recurrent class then $T|_{D'}$ does not have shadowing.
\end{remark}

\begin{proposition}\label{prop:L3}
Let $n\ge3$, let $M$ be the $n$-dimensional cube, and let $T$ be the
homeomorphism of Theorem~\ref{thm:example}. For every $\eta>0$ there is
$T'\in\Homeo(M)$ with $\dH(T,T')<\eta$ which satisfies all conclusions of
Theorem~\ref{thm:example} (with the same chain component $D$) and, in
addition, there is an open set $V\supset D$ such that every chain component
of $T'$ meeting $V$ fails to have the shadowing property.
\end{proposition}

\begin{proof}
Recall the data of the construction: $T$ was obtained from
Theorem~\ref{thm:abstract} applied to $(M,S)$, with a factor map
$\Pi_0\colon(M,T)\to(M,S)$. We have $\Pi_0\circ T=S\circ\Pi_0$, $\Pi$ is injective outside
$D=\Pi_0^{-1}(\Lambda\times\{0\})$, and there is an inserted strictly ergodic
Cantor system of entropy $H_C>\htop(S)=:b$, where
$b=\htop(F_n|_\Lambda)=\htop(F_n)$.

\emph{\textbf{Step 1:} chain components of $T$ near $D$.}
Since $\gamma$ is gradient-like and $F_n$ is Axiom~A, the chain components
of $S$ are: $\Lambda\times\{0\}$; the sets $\Lambda\times\{\pm a_i\}$ and
$P\times\{\pm a_i\}$, where $P$ runs over the finitely many periodic
orbits (sources and saddles) of $F_n$; the fixed point $*$; and $\partial
M$. As in the proof of Theorem~\ref{thm:abstract}, $x\sim_{T}y$ implies
$\Pi_0(x)\sim_S\Pi_0(y)$, so distinct chain components of $S$ cannot
merge under the extension. On the complement of $D$, the map $\Pi_0$ is
one-to-one, so on a compact neighbourhood of any remaining chain component
it is therefore a homeomorphism onto its image, with uniformly continuous
inverse. Chains in that component consequently lift through
$\Pi_0^{-1}$. Hence the chain components of $T$ are $D$ together with the
homeomorphic $\Pi_0$-preimages of the remaining components; in particular,
for each $i$ the set
$\widehat\Lambda_{\pm i}:=\Pi_0^{-1}(\Lambda\times\{\pm a_i\})$ is a
transitive chain component of $T$, conjugate to $F_n|_\Lambda$, carrying a
non-atomic ergodic measure of full support, and with
$\htop(T|_{\widehat\Lambda_{\pm i}})=b<\infty$.

\emph{\textbf{Step 3:} choice of $V$.}
The periodic orbits of $F_n$ lie at positive distance from the attractor
$\Lambda$; choose an open set $N_\Lambda\supset\Lambda$ in $\bbS^n$ whose
closure misses all of them, and $r\in(0,a_1)$, and put
$V:=\Pi_0^{-1}\bigl(N_\Lambda\times(-r,r)\bigr)$, an open neighbourhood of
$D$. By Step 1, the chain components of $T$ meeting $V$ are exactly $D$
and the sets $\widehat\Lambda_{\pm i}$ with $a_i<r$.

\emph{\textbf{Step 3:} consecutive relative insertions.}
Fix $i_0$ with $a_i<r$ for $i\ge i_0$. For $i\ge i_0$ choose open sets
$V_{\pm i}\Subset W_{\pm i}\Subset N_{\pm i}\subset V$ around
$\widehat\Lambda_{\pm i}$ with
$V_{\pm i}\cup T^{-1}(V_{\pm i})\subset W_{\pm i}$, such that the sets
$N_{\pm i}\cup T(N_{\pm i})$ are pairwise disjoint and disjoint from $D$
and from all periodic-orbit components. Let $(\eta_i)_{i\ge i_0}$ be
summable with $\sum\eta_i<\eta$, each $\eta_i$ smaller than the trapping
constants of the adjacent $\gamma$-gaps (so that the radial chain barriers
of Lemma~\ref{lem:gamma} persist under perturbations of size $\eta_i$).
Apply Proposition~\ref{prop:2d-BCLR-control} (whose statement and proof
concern a general compact manifold) to $T$ at
$K:=\widehat\Lambda_{\pm i}$ with the pair
$V_{\pm i}\Subset W_{\pm i}$, size $\eta_i$, and the product insertion by
a strictly ergodic Cantor system of entropy $H_C$ (the same constant as
above), over the set of generic points of the transported measure. This
yields homeomorphisms $T^{(\pm i)}$ equal to $T$ off $W_{\pm i}$ and
factor maps $\Theta_{\pm i}$ equal to $\id$ off $V_{\pm i}$ with
$$\Theta_{\pm i}\circ T^{(\pm i)}=T\circ\Theta_{\pm i} \text{ and }
\dH(T^{(\pm i)},T)+\dC(\Theta_{\pm i},id)<\eta_i.$$ Patch the maps
$T^{(\pm i)}$ into a single homeomorphism $T'$ and the maps
$\Theta_{\pm i}$ into a factor map $\Theta\colon(M,T')\to(M,T)$, exactly
as in the patching argument of Section~\ref{sec:square} (the same
partition-image argument shows $T'$ is a homeomorphism, and continuity at
$D$, where the supports accumulate, follows from $\eta_i\to0$). Then
$\dH(T,T')\le\sum\eta_i<\eta$, $T'=T$ on
$M\setminus\bigcup W_{\pm i}$ (in particular near $\partial M$, $*$, $D$
and the periodic-orbit components), and $\Theta=\id$ off
$\bigcup V_{\pm i}$.

\emph{\textbf{Step 4:} chain components of $T'$ and failure of shadowing near $D$.}
Uniform continuity of the factor map $\Theta$ shows that a chain relation
for points under $T'$ projects to a chain relation for $T$. Thus components of $T'$ over
distinct chain components of $T$ cannot merge. Over every unmodified
component, $\Theta$ is the identity and $T'=T$ while over
$\widehat\Lambda_{\pm i}$, $i\ge i_0$, its full preimage is
$D_{\pm i}:=\Theta_{\pm i}^{-1}(\widehat\Lambda_{\pm i})\subset N_{\pm i}$.
The radial barriers, which persist by the choice of the perturbation
sizes, prevent additional chain recurrence in the regions between these
preimages. Hence the chain components of $T'$ are those of $T$ with each
$\widehat\Lambda_{\pm i}$ replaced by $D_{\pm i}$;
transitivity of $T'|_{D_{\pm i}}$ and the fact that $D_{\pm i}$ is a chain
component follow by the argument proving \ref{con:P2} of
Theorem~\ref{thm:abstract} applied to $(T^{(\pm i)},\Theta_{\pm i})$. By
the universal isomorphism of Theorem~\ref{thm:BCLR}, every ergodic measure
of $T'|_{D_{\pm i}}$ has entropy at most $b$ or at least $H_C$, while
a product measure supplied by that theorem has entropy at least $H_C$. Thus
$\htop(T'|_{D_{\pm i}})\ge H_C$, and the nonempty interval $(b,H_C)$ is
disjoint from the the possible values of entropies of ergodic measures and lies below the topological
entropy. Since $T'|_{D_{\pm i}}$ is transitive with finite entropy,
Corollary~\ref{cor:nogap} shows that
$(D_{\pm i},T'|_{D_{\pm i}})$ does not have the shadowing property. The
chain components of $T'$ meeting $V$ are exactly $D$ and the sets
$D_{\pm i}$ with $i\ge i_0$, and none of them has the shadowing property
(for $D$ this is Theorem~\ref{thm:example}(2), reproved for $T'$ in
Step~5). This establishes the additional conclusion with the set $V$ of
Step~2.

\emph{\textbf{Step 5:} the conclusions of Theorem~\ref{thm:example} persist.}
All new ergodic measures created in Step~3 have entropy at most $b$ or at
least $H_C$, so the global ergodic entropy gap $(b,H_C)$ of $(M,T)$
persists for $(M,T')$; hence the arguments proving \ref{con:P1} and
\ref{con:P2} in Theorem~\ref{thm:abstract} (via
Theorem~\ref{thm:sh-gap:classes} and Corollary~\ref{cor:nogap}) apply to
$T'$ verbatim and show that neither $(M,T')$ nor $(D,T'|_D)$ has the
shadowing property; moreover
$\htop(T')\le b+H_C<\infty$ and $T'|_{\partial M}=\id$.
It remains to check $D\subseteq\Sh(T')$. Fix $\eps>0$. In the proof of
\ref{con:P3} of Theorem~\ref{thm:abstract} the tracing point was found in
$\widehat H(p_j):=\Pi_0^{-1}(\bbS^n\times\{a_j\})$, where the index $j$
may be taken arbitrarily large; choose $j$ with $\eta_j<\frac{\eps}{3}$ and
$\delta_0>0$ such that every $\delta_0$-pseudo-orbit of $T$ meeting $D$ is
$(\frac{\eps}{3})$-traced by a point of $\widehat H(p_j)$. Now let $(z_k)$ be a
$\delta$-pseudo-orbit of $T'$ meeting $D$, with $\delta$ small enough that
(a) the sequence $(\Theta(z_k))$ is a $\delta_0$-pseudo-orbit of $T$
(uniform continuity of $\Theta$ together with
$\Theta\circ T'=T\circ\Theta$), which meets $D$ since $\Theta=\id$ on $D$,
and (b) by the persistence of the confinement \ref{H:conf}, $(z_k)$ meets
only those sets $V_{\pm i}$ with $\eta_i<\frac{\eps}{3}$, so that
$d(\Theta(z_k),z_k)<\frac{\eps}{3}$ for all $k$. Let
$\zeta\in\widehat H(p_j)$ be a point whose $T$-orbit $(\frac{\eps}{3})$-traces
$(\Theta(z_k))$, and pick $\zeta'\in\Theta^{-1}(\zeta)$. Then
$$\Theta((T')^k(\zeta'))=T^k(\zeta)\in\widehat H(p_j) \text{ for all } k,$$ and
since a neighborhood of $\widehat H(p_j)$ meets the support of $\Theta$
only in $V_{\pm j}$, we get
$d((T')^k(\zeta'),T^k(\zeta))\le\eta_j<\frac{\eps}{3}$. Combining,
\[
d\bigl((T')^k(\zeta'),z_k\bigr)\le
d\bigl((T')^k(\zeta'),T^k(\zeta)\bigr)+
d\bigl(T^k(\zeta),\Theta(z_k)\bigr)+
d\bigl(\Theta(z_k),z_k\bigr)<\eps .
\]
Hence every sufficiently fine pseudo-orbit of $T'$ through $D$ is
$\eps$-traced, i.e.\ $D\subseteq\Sh(T')$, which completes the proof.
\end{proof}

\begin{remark}
Since $T$ is the identity on $\partial M$, we can collapse $\partial M$ to a point, obtaining a homeomorphism of $\bbS^{n+1}$.
Similarly, using an automorphisms of $\bbT^n$
we can perform this construction on a solid torus of any dimension $n\geq 2$, with the main difference that the map on the boundary is not the identity, since the relevant torus automorphisms are not homotopic to identity.
\end{remark}

\subsection{The two-dimensional local model}\label{sec:square}

Theorem~\ref{thm:example} gives the local block in dimensions at
	least three and yields the stronger inclusion
	$D\subseteq Sh(T)$. The product construction starts in dimension three,
	however, because it requires a transverse direction in addition to the
	sphere carrying the Plykin dynamics. Dimension two therefore needs a
	different mechanism. To deal with this problem, we will provide an annular family of circle maps and controlled
Brown--Barge--Martin-type blocks, while preserving the same three required
conclusions: classical shadowability of the central component, failure of
intrinsic and standard shadowing, and finite entropy.  We first construct the
model on an annulus and then pass to the square by collapsing one boundary
component.

Write
$\mathbb A=[-1,1]\times\mathbb{S}^1$ and fix an irrational rotation
$R_\alpha\colon\mathbb{S}^1\to\mathbb{S}^1$.

\subsubsection{A tracing family of circle maps}

Let $\tau\colon\R\to[-1,1]$ be the $1$-periodic piecewise linear map such
that $\tau(k)=-1$, $\tau(k+\tfrac12)=1$, and whose slopes are $\pm4$.
Given $0<\eps<\frac{1}{4}$, choose $0<\delta<\frac{\eps}{6}$ and $N\in\bbN$ such that
$\frac{4}{\delta}<N\le \frac{5}{\delta}$, and define a degree-one circle map $T_\eps$ by the lift
\[
 \widetilde T_\eps(x)=x+\alpha+3\delta\tau(Nx).
\]
Then $d_{C^0}(T_\eps,R_\alpha)<3\delta<\eps$.  Moreover, every arc
$[a,a+\delta]$ contains a full period of $\tau(N\cdot)$, and hence
\begin{equation}\label{eq:2d-covering}
 T_\eps([a,a+\delta])\supset
 [R_\alpha(a)-2\delta,R_\alpha(a)+2\delta].
\end{equation}
The absolute values of the slopes of $T_\eps$ are greater than $4$.
Consequently every nondegenerate arc eventually covers the circle, so
$T_\eps$ is topologically mixing.  It has a non-atomic ergodic invariant
measure of full support and finite topological entropy, since it is
piecewise monotone and uniformly Lipschitz (indeed, the above choice gives $\delta N\le5$), so its topological entropy is finite and bounded independently of $\eps$.

\begin{proposition}\label{prop:2d-circle-tracing}
Every two-sided $\delta$-pseudo-orbit $(x_i)_{i\in\Z}$ of $R_\alpha$ is
$\eps$-traced by a full orbit of $T_\eps$.
\end{proposition}

\begin{proof}
Put $I_i=[x_i-\delta,x_i+\delta]$.  By
\eqref{eq:2d-covering}, $T_\eps(I_i)\supset I_{i+1}$ for every $i$.
Applying this inclusion to finite blocks and then using compactness gives a
full orbit $(z_i)_{i\in\Z}$ with $z_i\in I_i$ for every $i$.  Thus
$d(z_i,x_i)\le\delta<\eps$.
\end{proof}

\subsubsection{Local Brown--Barge--Martin blocks}

We use a standard local form of the Brown--Barge--Martin construction; see
\cite[pp.~125--126]{CincOprocha2024} and compare \cite{BargeRoe1990}.

\begin{lemma}[Local BBM block]\label{lem:2d-bbm}
For every $a>0$ and $\eps>0$ there is a near-homeomorphism
$F_{a,\eps}$ of $[-a,a]\times\mathbb{S}^1$ such that:
\begin{enumerate}[label=\textup{(\roman*)}]
\item $F_{a,\eps}(0,x)=(0,T_\eps(x))$;
\item $F_{a,\eps}(\pm a,x)=(\pm a,R_\alpha(x))$;
\item there are $0<b<a$ and $\eta(\eps)>0$ such that
$F_{a,\eps}([-b,b]\times\mathbb{S}^1)=\{0\}\times\mathbb{S}^1$, the radial
coordinate is not increased in absolute value by $F_{a,\eps}$, and if
$(z_i)$ is an $\eta(\eps)$-pseudo-orbit of $F_{a,\eps}$ with
$z_{i_0}\in[-b,b]\times\mathbb{S}^1$ for some $i_0$, then
$z_i\in[-b,b]\times\mathbb{S}^1$ for every $i\ge i_0$ (forward trapping);
\item there is $\omega(\eps)\to0$ as $\eps\to0$ such that
\[
 \sup_{(t,x)}d_{\mathbb{S}^1}
 \bigl(\pi_2(F_{a,\eps}(t,x)),R_\alpha(x)\bigr)\le\omega(\eps).
\]
\end{enumerate}
\end{lemma}

\begin{proof}
Fix $0<b_0<b<a$ with $b_0\le\delta$, where $\delta=\delta(\eps)$ is the
constant from the construction of $T_\eps$. We will obtain $F_{a,\eps}$ as
the composition
\[
F_{a,\eps}:=P\circ S\circ\overline q ,
\]
where $P(t,x)=(t,R_\alpha(x))$ is the rigid rotation,
$S(t,x)=(s(t),x)$ is the radial collapse determined by $s(t)=0$ for
$|t|\le b$ and
$$s(t)=\operatorname{sgn}(t)\frac{a(|t|-b)}{a-b}$$ for $b\le|t|\le a$, and
$\overline q$ is a homeomorphism supported in $\{|t|<b_0\}$ constructed
below.
The map $s$ is a continuous nondecreasing surjection of $[-a,a]$ fixing
$\pm a$, and $s=\lim_k s_k$ uniformly, where $s_k$ is the homeomorphism
with $s_k(t)=\frac{t}{k}$ on $[-b,b]$, $s_k(\pm a)=\pm a$, affine in between.
Hence $S=\lim_k(s_k\times\id)$ is a uniform limit of homeomorphisms, and so
is $F_{a,\eps}=P\circ S\circ\overline q$, being a composition of a
near-homeomorphism with two homeomorphisms.

A lift of $q$ is $\widetilde q(x)=x+u(x)$ with
$u(x)=3\delta\tau(Nx)$, so $\|u\|_\infty=3\delta$ and $q$ has degree one.
Put $c:=\frac{b_0}{8\delta}$ and $v(x):=c\,u(x)$. Then
$\|v\|_\infty\le\frac{3b_0}{8}<\frac{b_0}{2}$. We claim that
\[
 e\colon\mathbb S^1\longrightarrow\left(-\frac{b_0}{2},\frac{b_0}{2}\right)\times\mathbb S^1,
 \qquad e(x):=(v(x),q(x)),
\]
is an embedding. Indeed, suppose $e(x)=e(x')$ and choose lifts of $x,x'$ to $\mathbb R$, still
denoting them as $x,x'$. After the lifting, equality of the first coordinates under $e$ still gives
$u(x)=u(x')$, while equality of the second coordinates gives
$x+u(x)=x'+u(x')+k$ for some $k\in\mathbb Z$. Hence $x-x'=k$, so $x$ and
$x'$ represent the same point of $\mathbb S^1$. Since the angular
projection of $e$ has degree one, $e(\mathbb S^1)$ is an essential
piecewise linear circle. Moreover, with $\iota(x):=(0,x)$,
\[
 \sup_x d\bigl(e(x),\iota(x)\bigr)
 \le \|v\|_\infty+\|u\|_\infty<4\delta .
\]

Let $a:=\frac{3b_0}{8}$ and for $t\in [-b_0,b_0]$ define the piecewise linear functions
\[
\varphi(t):=\max\Bigl(0,\,1-\tfrac{|t|}{b_0}\Bigr),
\qquad
\sigma(t):=
\begin{cases}
	\frac{t}{c}, & |t|\le a,\\[2pt]
	\operatorname{sgn}(t)\,3\delta\,\dfrac{b_0-|t|}{b_0-a}, & a\le|t|\le b_0,\\[2pt]
\end{cases}
\]
so that $\sigma$ is continuous (since $\frac{a}{c}=3\delta$) and $\|\sigma\|_\infty=3\delta$.
Define 
$\overline q:=S\circ G$, where
\[
G(t,x):=\bigl(t+\varphi(t)v(x),\,x\bigr),
\qquad
H(t,y):=\bigl(t,\,y+\sigma(t)\bigr),
\]
that is,
\[
\overline q(t,x)=\Bigl(t+\varphi(t)v(x),\;
x+\sigma\bigl(t+\varphi(t)v(x)\bigr)\Bigr).
\]
We extend $\overline q(t,x)=(t,x)$ for $|t|\geq b_0$.

The map $G$ preserves each fiber
	$[-b_0,b_0]\times\{x\}$, and since 
	$\varphi$ is $\frac{1}{b_0}$-Lipschitz and
	$\|v\|_\infty\le \frac{3b_0}{8}$, this map has slopes in
	$[\tfrac58,\tfrac{11}8]$, so it is an increasing homeomorphism of
	$[-b_0,b_0]$ fixing $\pm b_0$. Hence $G$ is a homeomorphism fixing the
	boundary, and it is piecewise linear because $\varphi$ and $v$ are. The map $H$
	rotates each fiber $\{t\}\times\mathbb S^1$ rigidly by $\sigma(t)$ and
	is the identity for $|t|=b_0$, so it is a piecewise linear homeomorphism as well.
	Thus $\overline q=H\circ G$ is a homeomorphism, equal to the
	identity on $\{|t|=b_0\}$. Observe that
	\[
	\overline q(0,x)=\bigl(v(x),\,x+u(x)\bigr)=\bigl(v(x),q(x)\bigr)=e(x)
	\]
	and be immediate estimate of the maximal displacement introduced by $G$ and $H$ we have
	\[
	d\bigl(\overline q(t,x),(t,x)\bigr)\le 3\delta+3\delta=6\delta
	\qquad\text{for all }(t,x).
	\]

It remains to verify that the constructed map satisfies the claims of the theorem.
(i) Since $|v(x)|<\frac{b_0}{2}<b$ we get $s(v(x))=0$, so
$F_{a,\eps}(0,x)=P(S(v(x),q(x)))=(0,R_\alpha(q(x)))=(0,T_\eps(x))$.
(ii) For $t=\pm a$ we have $\overline q(\pm a, \cdot)=\id$ and $s(\pm a)=\pm a$, so
$F_{a,\eps}(\pm a,x)=(\pm a,R_\alpha(x))$.
(iv) Neither $S$ nor $P$ changes the angular coordinate except by the
rigid rotation, hence
\[
d_{\mathbb{S}^1}
\bigl(\pi_2(F_{a,\eps}(t,x)),R_\alpha(x)\bigr)
\le d_{C^0}(\overline q,\id)\le6\delta
=:\omega(\eps)<\eps.
\]
(iii) Since $\overline q$ is supported in $\{|t|<b_0\}$, for $|t|\le b$
we have $\pi_1(\overline q(t,x))\in[-b,b]$ and therefore
$F_{a,\eps}(\{|t|\le b\})=\{0\}\times\mathbb{S}^1$. For $|t|\ge b_0$ the
radial coordinate of $F_{a,\eps}(t,x)$ is $s(t)$ and
$|s(t)|\le|t|$, so the radial coordinate is never increased. Set
$\eta(\eps):=\frac{b}{2}$. If $(z_i)$ is an $\eta(\eps)$-pseudo-orbit with
$z_{i_0}\in\{|t|\le b\}$, then
$F_{a,\eps}(z_{i_0})\in\{0\}\times\mathbb{S}^1$, so the radial coordinate
of $z_{i_0+1}$ is at most $\eta(\eps)\le \frac{b}{2}<b$, and inductively
$z_i\in\{|t|\le \frac{b}{2}\}\subset\{|t|\le b\}$ for all $i>i_0$. This proves
the forward trapping and completes the proof.
\end{proof}

Choose numbers $\gamma_n\searrow0$ and pairwise disjoint closed intervals
$I_{\pm n}$ centered at $\pm\gamma_n$, with lengths tending to zero.  Put
$$\mathcal A_{\pm n}=I_{\pm n}\times\mathbb{S}^1, C_{\pm n}=\{\pm\gamma_n\}\times\mathbb{S}^1 \text{ and }
\mathcal A_0=\{0\}\times\mathbb{S}^1.$$  Choose $\eps_n\searrow0$ and write
$T_n=T_{\eps_n}$.  On each $\mathcal A_{\pm n}$ place a translated copy of
the local block from Lemma~\ref{lem:2d-bbm}.  On every complementary
annulus use a map
\[
 (t,x)\longmapsto(f_J(t),R_\alpha(x)),
\]
where $f_J$ fixes the endpoints of $J$, has no interior fixed point, and is
strictly monotone toward one endpoint.  Finally set
$S(0,x)=(0,R_\alpha(x))$.

\begin{figure}[H]
\centering
\begin{tikzpicture}[
  x=1cm,y=1cm,
  every node/.style={font=\small},
  block/.style={draw=black,fill=gray!10,line width=.7pt},
  central/.style={draw=black,line width=1.2pt},
  drift/.style={draw=black,->,line width=.7pt}
]
  \draw[black,line width=.8pt] (-5.7,-1.05) rectangle (5.7,1.05);
  \node[rotate=90] at (-5.95,0) {$\mathbb S^1$};
  \node at (0,-1.28) {$0$};
  \node at (-5.65,-1.28) {$-1$};
  \node at (5.65,-1.28) {$1$};

  \filldraw[block] (-4.75,-1.05) rectangle (-3.95,1.05);
  \filldraw[block] (-3.25,-1.05) rectangle (-2.65,1.05);
  \filldraw[block] (-1.45,-1.05) rectangle (-1.05,1.05);
  \filldraw[block] (1.05,-1.05) rectangle (1.45,1.05);
  \filldraw[block] (2.65,-1.05) rectangle (3.25,1.05);
  \filldraw[block] (3.95,-1.05) rectangle (4.75,1.05);

  \draw[central] (-4.35,-1.05) -- (-4.35,1.05);
  \draw[central] (-2.95,-1.05) -- (-2.95,1.05);
  \draw[central] (-1.25,-1.05) -- (-1.25,1.05);
  \draw[central] (0,-1.05) -- (0,1.05);
  \draw[central] (1.25,-1.05) -- (1.25,1.05);
  \draw[central] (2.95,-1.05) -- (2.95,1.05);
  \draw[central] (4.35,-1.05) -- (4.35,1.05);

  \node at (-4.35,1.22) {$C_{-1}$};
  \node at (-2.95,1.22) {$C_{-2}$};
  \node at (-1.25,1.22) {$C_{-n}$};
  \node[fill=white,inner sep=1.5pt] at (0,.68) {$\mathcal A_0$};
  \node at (1.25,1.22) {$C_n$};
  \node at (2.95,1.22) {$C_2$};
  \node at (4.35,1.22) {$C_1$};

  \node at (-4.35,-1.22) {$\mathcal A_{-1}$};
  \node at (-2.95,-1.22) {$\mathcal A_{-2}$};
  \node at (-1.25,-1.22) {$\mathcal A_{-n}$};
  \node at (1.25,-1.22) {$\mathcal A_n$};
  \node at (2.95,-1.22) {$\mathcal A_2$};
  \node at (4.35,-1.22) {$\mathcal A_1$};

  \node at (-1.9,0) {$\cdots$};
  \node at (-.4,0) {$\cdots$};
  \node at (.45,0) {$\cdots$};
  \node at (1.9,0) {$\cdots$};

  \draw[drift] (-3.82,.05) -- (-3.38,.05);
  \draw[drift] (-2.52,.05) -- (-2.18,.05);
  \draw[drift] (-.98,.05) -- (-.64,.05);
  \draw[drift] (.64,.05) -- (.98,.05);
  \draw[drift] (2.18,.05) -- (2.52,.05);
  \draw[drift] (3.38,.05) -- (3.82,.05);
\end{tikzpicture}
\caption{The radial arrangement in
$\mathbb A=[-1,1]\times\mathbb S^1$.  The local blocks
$\mathcal A_{\pm n}$ shrink toward the central rotation circle
$\mathcal A_0$; their central circles $C_{\pm n}$ carry the maps $T_n$.
The arrows indicate the monotone radial motion on complementary
annuli.}
\label{fig:annular-arrangement}

\end{figure}

The definitions agree on the boundary circles.  Since the widths of the
inserted annuli and $\omega(\eps_n)$ tend to zero, $S$ is continuous at
$\mathcal A_0$.  Approximating the local blocks by boundary-fixing
homeomorphisms, with errors tending to zero, shows that $S$ is a
near-homeomorphism of $\mathbb A$. Indeed, by Step~1 of the proof of
Lemma~\ref{lem:2d-bbm} each block equals
$P\circ S\circ\overline q=\lim_k P\circ(s_k\times\id)\circ\overline q$,
where the approximating maps are homeomorphisms fixing the block boundary
circles; replacing each block by its $k$-th approximation, with $k=k(n)$
chosen so large that the errors are summable over $n$, and keeping the
interpolating skew products unchanged, produces global homeomorphisms of
$\mathbb A$ converging uniformly to $S$.

\begin{theorem}\label{thm:2d-central-tracing}
For every $\eps>0$ and every $N_0\in\bbN$ there is $\delta>0$ such that
every two-sided $\delta$-pseudo-orbit of $S$ meeting $\mathcal A_0$ is
$\eps$-traced by a full orbit contained in some $C_{\pm m}$ with
$m\ge N_0$.
\end{theorem}

\begin{proof}
For $k\ge1$ write $a_k$, $b_k$, $\eta_k:=\eta(\eps_k)$ and
$\delta_k:=\delta(\eps_k)$ for the parameters of the block placed on
$\mathcal A_{\pm k}$ (so $a_k$ is half the width of $I_{\pm k}$, and
$\delta_k$ is the constant from the construction of
$T_k=T_{\eps_k}$, for which Proposition~\ref{prop:2d-circle-tracing}
traces every $\delta_k$-pseudo-orbit of $R_\alpha$ with accuracy
$\delta_k$). Denote by
$B_{\pm k}\subset\mathcal A_{\pm k}$ the trapping strip of radial width
$2b_k$ around $C_{\pm k}$ provided by
Lemma~\ref{lem:2d-bbm}(iii).

Choose $m\ge N_0$ so large that
$$2\gamma_m+\delta_m<\frac{\eps}{2}.$$ Next choose $n>m$ so large that (a) the
closed region $\Sigma_n$ between the outer boundaries of
$\mathcal A_{-n}$ and $\mathcal A_n$ lies within radial distance
$\frac{\eps}{4}$ of $\mathcal A_0$, and (b) the angular displacement of $S$
relative to $R_\alpha$ on $\Sigma_n$ is at most $\frac{\delta_m}{2}$; (b) is
possible because on $\Sigma_n$ the map $S$ is either an interpolating
skew product or a block of level $k$ with $|k|\ge n$, whose angular
displacement is at most
$$\omega(\eps_k)\le\sup_{k\ge n}\omega(\eps_k).$$.
In particular, $\omega(\eps_k)\to 0$.

Set
\[
\delta:=\min\left\{\frac{\delta_m}{2},\eta_n,b_n\right\}
\]
and let $(z_i)_{i\in\Z}$ be a $\delta$-pseudo-orbit of $S$ with
$z_{i_0}\in\mathcal A_0$. Write $z_i=(t_i,x_i)$.

We claim that $|t_i|\le\gamma_n+a_n$ for all $i$, i.e.\ the pseudo-orbit
never passes beyond the level-$n$ blocks. First note that if
$z_i\in B_n$ for some $i$, then by Lemma~\ref{lem:2d-bbm}(iii) (applied
in the block, whose trapping threshold is $\eta_n\ge\delta$) we get
$z_j\in B_n$ for all $j\ge i$; the same holds for $B_{-n}$. Since
$\mathcal A_0\cap B_{\pm n}=\emptyset$, no index $i<i_0$ can lie in
$B_{\pm n}$, for otherwise $z_{i_0}\in B_{\pm n}$. Moreover, a
transition of the radial coordinate from the inner side of a level-$n$
block to its outer side (or vice versa) forces some $z_i$ to lie in
$B_{\pm n}$: within the block the map does not increase the radial
distance from $C_{\pm n}$ by more than the pseudo-orbit error
$\delta\le b_n$, and a single jump cannot cross the strip $B_{\pm n}$ of
width $2b_n$. Consequently, for $i\le i_0$ the pseudo-orbit stays on the
inner side of both strips $B_{\pm n}$; for $i\ge i_0$ it either stays
there as well, or enters one of the strips and remains in it forever. In
all cases $$|t_i|\le\gamma_n+a_n<\frac{\eps}{4},$$ for every $i$, proving the claim.

For every $i$, one gets
\[
\begin{split}
d_{\mathbb{S}^1}(x_{i+1},R_\alpha(x_i))
&\le d(z_{i+1},S(z_i))\\
&\quad+
d_{\mathbb{S}^1}\bigl(\pi_2(S(z_i)),R_\alpha(x_i)\bigr)\\
&\le\delta+\frac{\delta_m}{2}\le\delta_m,
\end{split}
\]
using (b) and the fact that $z_i\in\Sigma_n$. Hence $(x_i)$ is a
$\delta_m$-pseudo-orbit of $R_\alpha$, and
Proposition~\ref{prop:2d-circle-tracing} yields a full $T_m$-orbit
$(y_i)_{i\in\Z}$ satisfying
$d_{\mathbb{S}^1}(y_i,x_i)\le\delta_m$, for all $i$.

Put $w_i:=(\gamma_m,y_i)\in C_m$. By
Lemma~\ref{lem:2d-bbm}(i), applied to the block on $\mathcal A_m$, the
circle $C_m$ is $S$-invariant and $S|_{C_m}$ acts as $T_m$, so
$(w_i)_{i\in\Z}$ is a full $S$-orbit contained in $C_m$. Finally, in the
maximum metric,
\[
d(w_i,z_i)\le
\max\{|\gamma_m-t_i|,d_{\mathbb{S}^1}(y_i,x_i)\}
\le\max\left\{\gamma_m+\frac{\eps}{4},\delta_m\right\}<\eps,
\]
since $\gamma_m<\frac{\eps}{4}$ by the choice of $m$. Thus $(z_i)$ is
$\eps$-traced by a full orbit in $C_m$ with $m\ge N_0$ (symmetrically one
may use $C_{-m}$), completing the proof.
\end{proof}

Inside each local block, the central circle attracts all points away from
the boundary, while on each complementary annulus the radial coordinate is
strictly monotone.  It follows that the chain components of $S$ are
$\mathcal A_0$, the circles $C_{\pm n}$, the boundary circles of the local
blocks, and the two boundary components of $\mathbb A$.

Let $X=\varprojlim(\mathbb A,S)$ and let $H_0$ be the shift homeomorphism.
By Brown's approximation theorem \cite{Brown1960}, $X$ is an annulus.  The
inverse limits of the two boundary circles and of $\mathcal A_0$ are
circles carrying $R_\alpha$; denote the latter by $A_0$.  It is an
essential circle.  Put
\[
 K_{\pm n}=\varprojlim(C_{\pm n},T_n).
\]
Each $H_0|_{K_{\pm n}}$ is transitive, has finite topological entropy, and
supports a non-atomic ergodic measure of full support.  Moreover, the sets
$K_{\pm n}$ and the rotation circles are precisely the chain components of
$H_0$, and every $K_{\pm n}$ is isolated.

\begin{lemma}\label{lem:2d-extension-tracing}
For every $\eps>0$ and $N\ge1$ there is $\delta>0$ such that every
two-sided $\delta$-pseudo-orbit of $H_0$ meeting $A_0$ is $\eps$-traced by
an orbit contained in some $K_{\pm m}$ with $m\ge N$.
\end{lemma}

\begin{proof}
We regard
\[
X=\{\xi=(\xi_0,\xi_1,\ldots):\xi_i\in\mathbb A,\
\xi_i=S(\xi_{i+1})\}
\]
with the metric
$d(\xi,\zeta)=\sum_{i\ge0}2^{-i}d(\xi_i,\zeta_i)$, and
$H_0(\xi)=(S(\xi_0),\xi_0,\xi_1,\ldots)$, so that
$(H_0(\xi))_i=S(\xi_i)$ for every $i\ge0$.

Fix $\eps>0$ and $N\ge1$. Choose a depth $k\ge0$ with
\[
\sum_{i>k}2^{-i}\diam(\mathbb A)<\frac{\eps}{4}.
\]
Let $\eps_1>0$ be so small that $d(a,b)\le\eps_1$ implies
$d(S^r(a),S^r(b))\le\frac{\eps}{8}$ for all $0\le r\le k$ (uniform continuity of
$S,\ldots,S^k$). Let $\delta^*>0$ be the constant given by
Theorem~\ref{thm:2d-central-tracing} for the accuracy $\eps_1$ and the
index bound $N$, and set $\delta:=2^{-k}\delta^*$.

Let $(\xi^j)_{j\in\Z}$ be a $\delta$-pseudo-orbit of $H_0$ with
$\xi^{j_0}\in A_0$. Put $a_j:=\xi^j_k\in\mathbb A$. Since
$$(H_0(\xi^j))_k=S(\xi^j_k) \text{ and }
d(H_0(\xi^j),\xi^{j+1})\le\delta,$$ the definition of the metric gives
\[
d(S(a_j),a_{j+1})\le2^k\delta=\delta^*,
\]
so $(a_j)_{j\in\Z}$ is a $\delta^*$-pseudo-orbit of $S$. Moreover
$a_{j_0}=\xi^{j_0}_k\in\mathcal A_0$, because
$A_0=\varprojlim(\mathcal A_0,S|_{\mathcal A_0})$ has all coordinates in
$\mathcal A_0$. By Theorem~\ref{thm:2d-central-tracing} there are
$m\ge N$ and a full $S$-orbit $(w_j)_{j\in\Z}$ contained in
$C_{\pm m}$ with $d(w_j,a_j)\le\eps_1$ for all $j\in\Z$.

Define, for $j\in\Z$, the point $\eta^j\in X$ by
$\eta^j_i:=w_{j+k-i}$ for $i\ge0$. The bonding condition
$S(\eta^j_{i+1})=w_{j+k-i}=\eta^j_i$ holds because $(w_j)$ is a full
$S$-orbit; moreover $\eta^{j+1}=H_0(\eta^j)$, so
$(\eta^j)_{j\in\Z}$ is a full $H_0$-orbit, and all its coordinates lie in
the $S$-invariant circle $C_{\pm m}$, whence
$\eta^j\in\varprojlim(C_{\pm m},T_m)=K_{\pm m}$.

Finally, we estimate $d(\eta^j,\xi^j)$. For each $0\le i\le k,$ we have
$$\xi^j_i=S^{k-i}(\xi^j_k)=S^{k-i}(a_j) \text{ and }
\eta^j_i=w_{j+k-i}=S^{k-i}(w_j).$$ Since
$d(w_j,a_j)\le\eps_1$, the choice of $\eps_1$ gives
$d(\eta^j_i,\xi^j_i)\le\frac{\eps}{8}$. Hence
\[
d(\eta^j,\xi^j)\le
\sum_{i=0}^{k}2^{-i}\frac{\eps}{8}
+\sum_{i>k}2^{-i}\diam(\mathbb A)
\le\frac{\eps}{4}+\frac{\eps}{4}<\eps .
\]
Thus $(\xi^j)$ is $\eps$-traced by the $H_0$-orbit $(\eta^j)$ contained
in $K_{\pm m}$ with $m\ge N$.
\end{proof}

\subsubsection{Ensuring lack of shadowing on the isolated components}

We need a relative version of the control in
Proposition~\ref{prop:limit}.
Let  $K=\Supp\mu$ be the support of measure used for extension in Theorem~\ref{thm:BCLR}
and let $T$, $\Pi$ be provided by its application. Let $W$ be an open
neighborhood of $K$ and let $V$ be an open set with
$K\subset V\subset \overline{V}\subset W$ and $V\cup S^{-1}(V)\subset W$.
Such $V$ exists
for every neighborhood $W$ of the invariant set $K$, by continuity of
$S$ and invariance of $K$. Note that smallness of
$\dH(T,S)+\dC(\Pi,\id)$ alone does not localize the perturbation; what we
use instead is the structure of the construction. In the notation of
Proposition~\ref{prop:limit}, the conjugacies produced in \cite{BCLR} are
supported in finite families of topological rectangles surrounding orbit
segments of points of $A\subset K$, and the diameters of these rectangles
are free parameters; hence all rectangles may be chosen with closures
contained in $V$. With this choice every conjugacy, and in the limit the
factor map, is the identity off $V$, so $\Pi=\id$ on $M\setminus V$.
Moreover, if $x\notin V\cup S^{-1}(V)$, then $\Psi_n(x)=x$ and
$S(x)\notin V$, so $T_n(x)=\Psi_n^{-1}(S(x))=S(x)$ for every $n$. Passing
to the limit, $T=S$ on $M\setminus(V\cup S^{-1}(V))\supset M\setminus W$.

\begin{proposition}
\label{prop:2d-BCLR-control}
For every $\eta>0$ one can require in Theorem~\ref{thm:BCLR} that
\[
 \dH(T,S)+\dC(\Pi,\id)<\eta.
\]
\end{proposition}

\begin{proof}
Recall the shape of the construction in \cite[\S\S2--5]{BCLR}, in the
summarized form of Proposition~\ref{prop:limit}: one produces
homeomorphisms
$\Psi_n=h_n\circ h_{n-1}\circ\cdots\circ h_1$ of $M$ and sets $T_n$, $T$ and $\Pi$ such that:
\begin{itemize}
\item $T_n:=\Psi_n^{-1}\circ S\circ\Psi_n$,
\item $T_n\xrightarrow{unif} T$,
\item $\Psi_n\xrightarrow{unif} \Pi$.
\end{itemize}
Each $h_n$ is supported in a finite disjoint family
$\mathcal E_n$ of topological rectangles chosen around finite segments
of orbits of points of $A\subset K$. The only requirements imposed on
$\mathcal E_n$ in \cite{BCLR} are combinatorial (disjointness and a
Markov intersection pattern with $\mathcal E_{n-1}$ and with the inserted
Cantor fibres), together with the requirement that the diameters of the
rectangles of $\mathcal E_n$ tend to $0$ as $n\to\infty$. 
%
%
%

For the quantitative estimate, prescribe first a positive summable
sequence $(\rho_n)$. 
At the $n$-th stage only finitely many 
rectangles are involved and they can be
chosen recursively 
with diameters below $\rho_n$, keeping their closures in $V$.
Shrinking them and, when necessary, passing to a finer subdivision
preserves the finite disjointness and incidence requirements of that
stage. This is precisely the freedom in the choice of rectangles used in
\cite[Proposition~3.1 and \S1.5.1]{BCLR}.

Since $h_n$ moves points only within rectangles of $\mathcal E_n$, we get
$\dC(\Psi_n,\Psi_{n-1})\le\rho_n$ and hence
$r_n:=\dC(\Psi_n,\id)\le\sum_{j\le n}\rho_j$. Notice also that
$\dC(\Psi_n^{-1},\id)=r_n$. From
$T_n=\Psi_n^{-1}\circ S\circ \Psi_n$ and
$T_n^{-1}=\Psi_n^{-1}\circ S^{-1}\circ \Psi_n$ we obtain
\[
\begin{split}
\dC(T_n,S)&\le r_n+\omega_S(r_n),\\
\dC(T_n^{-1},S^{-1})&\le r_n+\omega_{S^{-1}}(r_n),
\end{split}
\]
where $\omega_S$ and $\omega_{S^{-1}}$ are moduli of uniform continuity. 
Passing to the limits $T_n\to T$ in $\dH$ and $\Psi_n\to\Pi$ uniformly
gives the same bounds with $$r:=\sum_n\rho_n \text{ and }\dC(\Pi,\id)\le r.$$ Choosing $r$ sufficiently small therefore yields
$$\dH(T,S)+\dC(\Pi,\id)<\eta.$$
\end{proof}

We apply this proposition independently inside pairwise disjoint invariant
annuli surrounding the sets $K_{\pm n}$.  Let
$b_n=\htop(H_0|_{K_{\pm n}})$.  By the Jewett--Krieger theorem
\cites{Jewett,Krieger}, choose a strictly ergodic Cantor system
$(C_n,r_n)$ measure-theoretically isomorphic to a Bernoulli system and with
unique measure entropy $H_n>b_n$.  Apply Theorem~\ref{thm:BCLR} over a set
of generic points of a full-support ergodic measure on $K_{\pm n}$, using
the product insertion with $(C_n,r_n)$ and the relative control above.
Write $D_{\pm n}$ for the preimage of $K_{\pm n}$.

The universal isomorphism shows that every ergodic measure on
$D_{\pm n}$ either has entropy at most $b_n$ or at least $H_n$, whereas a
product measure has entropy at least $H_n$.  Hence the ergodic entropy
spectrum has a nonempty gap below $\htop(H|_{D_{\pm n}})$.  By
Corollary~\ref{cor:nogap}, the transitive system $H|_{D_{\pm n}}$ does not
have the shadowing property.

For each $n\ge1$ choose open sets
$V_{\pm n}\subset \overline{V_{\pm n}}\subset W_{\pm n}\subset N_{\pm n}$ around $K_{\pm n}$ such
that:
\begin{enumerate}
    \item $V_{\pm n}\cup H_0^{-1}(V_{\pm n})\subset W_{\pm n}$. 
    \item The sets $N_{\pm n}\cup H_0(N_{\pm n})$ are pairwise disjoint over all indices.
     \item  $N_{\pm n}$ is disjoint from $A_0$, from the rotation circles, and from the trapping regions of all blocks of level different from $n$.
\end{enumerate}
This is possible because the sets $K_{\pm n}$ are pairwise disjoint
isolated invariant sets accumulating only on $A_0$. Let $\eta_n>0$ be a
summable sequence, with $\eta_n$ smaller than the trapping thresholds of
the level-$n$ and neighbouring blocks, to be further restricted below.

Apply Proposition~\ref{prop:2d-BCLR-control} to $(X,H_0)$ at $K:=K_n$
with the pair $V_n\subset W_n$ and size $\eta_n$, obtaining a
homeomorphism $T^{(n)}$ of $X$ with $T^{(n)}=H_0$ off $W_n$, a factor map
$\Pi_n$ with $\Pi_n=\id$ off $V_n$,
$\Pi_n\circ T^{(n)}=H_0\circ\Pi_n$, and
$\dH(T^{(n)},H_0)+\dC(\Pi_n,\id)<\eta_n$; proceed symmetrically for
$K_{-n}$. Define
\[
H(x):=
\begin{cases}
T^{(\pm n)}(x),&x\in W_{\pm n},\\
H_0(x),&x\notin\displaystyle\bigcup_k W_{\pm k},
\end{cases}
\qquad
\Pi(x):=
\begin{cases}
\Pi_{\pm n}(x),&x\in V_{\pm n},\\
x,&x\notin\displaystyle\bigcup_k V_{\pm k}.
\end{cases}
\]
The definitions are consistent
($T^{(\pm n)}=H_0$ and $\Pi_{\pm n}=\id$ near
$\partial W_{\pm n}$ and $\partial V_{\pm n}$), and both maps are
continuous: at points of $A_0$, which is the only place where infinitely
many supports accumulate, continuity follows from $\eta_n\to0$. To see
that $H$ is a homeomorphism, note that since $T^{(\pm n)}$ is a
homeomorphism of $X$ equal to $H_0$ off $W_{\pm n}$, we have
$T^{(\pm n)}(W_{\pm n})=H_0(W_{\pm n})$; hence $H$ maps the members of
the partition
\[
\{W_{\pm k}\}_k\cup
\left\{X\setminus\bigcup W_{\pm k}\right\}
\]
onto the members of the partition
\[
\{H_0(W_{\pm k})\}_k\cup
\left\{X\setminus\bigcup H_0(W_{\pm k})\right\},
\]
bijectively on each member. Thus $H$ is a continuous bijection of a
compact space, hence a homeomorphism. The identity
$\Pi\circ H=H_0\circ\Pi$ is verified piecewise: on $W_{\pm n}$ it is the
semiconjugacy relation for $T^{(\pm n)}$ (here we use that
$H(W_{\pm n})=H_0(W_{\pm n})\subset
N_{\pm n}\cup H_0(N_{\pm n})$ meets no $V_{\pm k}$ with $k\neq n$, by
the disjointness required above), and off $\bigcup W_{\pm k}$ both sides
equal $H_0$ at points whose image avoids every $V_{\pm k}$.

The chain components of $H$ are exactly the rotation circles and the sets
$D_{\pm n}:=\Pi_{\pm n}^{-1}(K_{\pm n})$. Indeed, outside
$\bigcup N_{\pm k}$ the maps $H$ and $H_0$ coincide, and since every
$\eta_k$ is smaller than the relevant trapping thresholds, the radial
trapping strips of all blocks remain forward trapping for
$H$-pseudo-orbits with sufficiently small error; hence chains cannot
cross between different levels any more than they could for $H_0$, so no
two of the listed sets merge and no new chain recurrence appears in the
radially strictly monotone regions. Within $N_{\pm n}$, the argument
proving \ref{con:P2} in Theorem~\ref{thm:abstract}, applied verbatim to
the pair $(T^{(\pm n)},\Pi_{\pm n})$, shows that $D_{\pm n}$ is a
transitive chain component of $H$.

Finally, Lemma~\ref{lem:2d-extension-tracing} survives the perturbation in
the following form: for every $\eps>0$ and $N\ge1$ there is $\delta>0$
such that every $\delta$-pseudo-orbit of $H$ meeting $A_0$ is
$\eps$-traced by an $H$-orbit contained in some $D_{\pm m}$ with
$m\ge N$. To see this, fix $\eps,N$ and choose $n_0$ so large that
$\eta_m<\frac{\eps}{4}$ for all $m\ge n_0$. The persistent radial barriers used
above also give a finite-support avoidance property: there is
$\delta_1>0$ such that every $\delta_1$-pseudo-orbit of $H$ meeting
$A_0$ avoids all $V_{\pm k}$ with $k<n_0$. Indeed, choose a pair of
trapping strips between $A_0$ and the finitely many sets in question and
take $\delta_1$ smaller than their persistent trapping thresholds. The
same forward-and-backward argument as in the radial-confinement part of
Theorem~\ref{thm:2d-central-tracing} applies.

Let $\delta^*$ be the constant of
Lemma~\ref{lem:2d-extension-tracing} for the accuracy $\frac{\eps}{4}$ and index
bound $$N':=\max\{N,n_0\}.$$ Since $\Pi$ is uniformly continuous,
$\Pi=\id$ on a neighbourhood of $A_0$, and
$\Pi\circ H=H_0\circ\Pi$, for sufficiently small $\delta$ the
$\Pi$-image of any $\delta$-pseudo-orbit $(\xi^j)$ of $H$ meeting $A_0$
is a $\delta^*$-pseudo-orbit of $H_0$ meeting $A_0$, and $\delta$ may
also be chosen below $\delta_1$. By
Lemma~\ref{lem:2d-extension-tracing} it is $(\frac{\eps}{4})$-traced by an
$H_0$-orbit $(\eta^j)$ contained in some $K_{\pm m}$, $m\ge N'$. Pick
any $\zeta\in\Pi_{\pm m}^{-1}(\eta^0)\subset D_{\pm m}$. Then
$$\Pi(H^j(\zeta))=H_0^j(\Pi(\zeta))=\eta^j,$$ for all $j\in\Z$, and since
$\dC(\Pi_{\pm m},\id)<\eta_m$ we get
$$d(H^j(\zeta),\eta^j)<\eta_m<\frac{\eps}{4}.$$ Moreover, the definition of $\Pi$ give
$d(\Pi(\xi^j),\xi^j)<\frac{\eps}{4}$ for all $j$. Hence
\[
\begin{split}
d(H^j(\zeta),\xi^j)
&\le d(H^j(\zeta),\eta^j)+d(\eta^j,\Pi(\xi^j))\\
&\quad+d(\Pi(\xi^j),\xi^j)<3\frac{\eps}{4}<\eps.
\end{split}
\]
Thus the $H$-orbit of $\zeta\in D_{\pm m}$ $\eps$-traces $(\xi^j)$, as
required; in particular
$A_0\subseteq\Sh(H)$.

\begin{theorem}\label{thm:2d-annulus}
There exist a homeomorphism $H$ of the annulus and an essential circle
$A\subset\interior\mathbb A$ such that:
\begin{enumerate}
\item $A$ is a chain component, $H|_A$ is conjugate to $R_\alpha$, and
$A\subseteq\Sh(H)$;
\item the restrictions of $H$ to both boundary components are conjugate to
$R_\alpha$;
\item neither $(\mathbb A,H)$ nor the restriction of $H$ to any chain
component has the shadowing property;
\item $\htop(H)<\infty$.
\end{enumerate}
\end{theorem}

\begin{proof}
Take $A=A_0$.  The tracing conclusion above implies
$A\subseteq\Sh(H)$.  Every chain component is either a rotation circle or
one of the sets $D_{\pm n}$.  Irrational rotations do not have shadowing,
and the latter systems do not have shadowing by the entropy-gap argument.
Thus the restriction of $H$ to no chain component has the shadowing property.

Each $D_{\pm n}$ is isolated.  If $H$ had shadowing, then its restriction
to an isolated invariant set $D_{\pm n}$ would also have shadowing, a
contradiction.  Finally, the entropies of the inserted Cantor systems may be
chosen uniformly bounded, and the perturbations occur in disjoint invariant
regions; hence $\htop(H)<\infty$.
\end{proof}

\begin{corollary}\label{cor:2d-square}
Let $M=[0,1]^2$.  There exist $T\in\Homeo(M)$ and an uncountable chain
component $D\subset\interior M$ such that:
\begin{enumerate}
\item $T|_D$ is transitive and $D\subseteq\Sh(T)$;
\item neither $(M,T)$ nor $(D,T|_D)$ has the shadowing property;
\item $\htop(T)<\infty$.
\end{enumerate}
\end{corollary}

\begin{proof}
Collapse one boundary component of the annulus in
Theorem~\ref{thm:2d-annulus} to a point.  The quotient is a closed disc,
hence is homeomorphic to the square.  The distinguished interior circle and
all pseudo-orbits through it are disjoint from the collapsed boundary, so
its chain-component and shadowability properties are unchanged.  The other
boundary component remains a rotation circle, while the collapsed boundary
becomes an additional fixed-point chain component.  The entropy does not
increase under the quotient.
\end{proof}

\subsection{Insertion into arbitrary manifolds and density}\label{subsec:dense-insertion}

This subsection proves
Theorem~\ref{thm:shadowable-dense}.  The local blocks constructed above are
not isolated examples. They are used as plugs, inserted by a
periodic-point surgery within an arbitrarily small $C^0$-neighborhood of
any manifold homeomorphism.  We first create a periodic
orbit and then replace a sufficiently small neighbourhood of each point of
that orbit creating a periodic cube.  The extension step is an immediate
application of Theorem~A.1 of \cite{BargeConjecture}.

The local models constructed above concern the classical two-sided set $\Sh(T)$. We use this stronger conclusion because the distinguished component consists entirely of shadowable points even though its restricted dynamics has no intrinsic shadowing. Since $\Sh(T)\subset Sh^+(T)$, the models also satisfy every forward-time conclusion used later.

\begin{lemma}[$C^0$ closing lemma for homeomorphisms]\label{lem:C0-closing}
Let $M$ be a compact manifold and let $f\in\Homeo(M)$. Every $C^0$-neighbourhood of $f$ contains a homeomorphism with a periodic point.
\end{lemma}
\begin{proof}
Choose an $f$-invariant Borel probability measure and a recurrent point $x$ in its support. Given a sufficiently small coordinate neighbourhood $U$ of $x$, recurrence provides $q\geq1$ such that $f^q(x)\in U$ and the intermediate points $f^i(x)$, $1\leq i<q$, lie outside a smaller neighbourhood containing both $x$ and $f^q(x)$. There is a homeomorphism $h$, supported in that smaller coordinate neighbourhood and arbitrarily close to the identity, such that $h(f^q(x))=x$. For $g=h\circ f$ the first $q-1$ iterates of $x$ are unchanged and $g^q(x)=x$. By choosing the return and the support sufficiently small, both $g$ and $g^{-1}$ are arbitrarily uniformly close to $f$ and $f^{-1}$, respectively.
\end{proof}

\begin{lemma}[Creation of a periodic spot]\label{lem:periodic-spot}
Let $M$ be a compact manifold, let $f\in\Homeo(M)$, and suppose that $p$ is
a periodic point of least period $q$.  Assume that the local return map
$f^q$ preserves orientation near $p$.  Then, for every $C^0$-neighbourhood
$\mathcal U$ of $f$ and every neighbourhood $U$ of the orbit of $p$, there
are $g\in\mathcal U$ and a locally flat closed $n$-cell $D\subset U$ such
that
\[
 D,g(D),\ldots,g^{q-1}(D)
\]
are pairwise disjoint and
\[
 g^q|_D=\id_D.
\]
Moreover, $g=f$ outside $U$.
\end{lemma}

\begin{proof}
Choose a locally flat closed $n$-cell $C$ containing $p$ in its interior,
so small that
\[
 C\cap f^i(C)=\varnothing, \text{ for all } 1\le i<q \text{ and } \bigcup_{i=0}^{q-1}f^i(C)\subset U.\]  Choose a still smaller locally
flat cell $D\Subset C$ such that $f^q(D)\Subset C$.  After shrinking $D$,
if necessary, the restriction $f^q|_D$ has the same local orientation as
the return at $p$.

Apply Theorem~A.1 of \cite{BargeConjecture} with
\[
 X=Y=C,\qquad \alpha=\id_C,\qquad B_1=f^q(D),\qquad C_1=D,
\]
and
\[
 \beta_1=(f^q|_D)^{-1}\colon f^q(D)\longrightarrow D.
\]
It gives a homeomorphism $h_C\colon C\to C$ such that
\[
 h_C|_{\partial C}=\id,
 \qquad
 h_C|_{f^q(D)}=(f^q|_D)^{-1}.
\]
Extend $h_C$ by the identity on $M\setminus C$ and denote the resulting
homeomorphism by $h$.  Put $g=h\circ f$.  Since $f^i(D)\cap C=\varnothing$
for $1\le i<q$, we have
\[
 g^i|_D=f^i|_D \text{ for every }0\le i<q,
\]
and consequently
\[
 g^q|_D=h\circ f^q|_D
       =(f^q|_D)^{-1}\circ f^q|_D
       =\id_D.
\]
The cell $C$ may be chosen arbitrarily small.  Hence $h$ is arbitrarily
$C^0$-close to the identity, both $g$ and $g^{-1}$ are arbitrarily close to
$f$ and $f^{-1}$, respectively, and the perturbation is supported in $U$.
\end{proof}

\begin{remark}\label{rem:orientation-return}
The orientation assumption causes no restriction in the application below.
If the return along the periodic orbit reverses orientation, an arbitrarily
small local perturbation splits the orbit into an orbit of twice the period;
the new return preserves orientation.  Concretely, replace each point of
the orbit by two nearby points and use Theorem~A.1 of
\cite{BargeConjecture} in pairwise disjoint coordinate cells to prescribe
the successive transitions.  The square of the original local orientation
sign is positive.  We shall use this standard orbit-doubling modification
without further mention.
\end{remark}

\begin{lemma}[Insertion into a periodic spot]\label{lem:periodic-insertion}
Let $g\in\Homeo(M)$ and let $D$ be a locally flat periodic spot of period
$q$, so $g^q|_D=\id_D$.  Let $L\in\Homeo(D)$ satisfy
$L|_{\partial D}=\id$.  Then there is $G\in\Homeo(M)$ such that
\begin{enumerate}[label=\textup{(\roman*)}]
\item $G=g$ outside $g^{q-1}(D)$;
\item $G^q|_D=L$;
\item $G$ can be made arbitrarily $C^0$-close to $g$ by choosing the
      periodic spot sufficiently small;
\item the dynamics of $G$ on
      $\widehat D=\bigcup_{i=0}^{q-1}G^i(D)$ is the cyclic extension of
      $L$, and therefore chain components, transitivity, shadowable points,
      and failure of shadowing correspond under the first-return map.
\end{enumerate}
Furthermore,
\[
 \htop(G|_{\widehat D})=\frac1q\htop(L).
\]
\end{lemma}

\begin{proof}
Set $G=g$ outside $g^{q-1}(D)$ and, for $x\in g^{q-1}(D)$, put
\[
 G(x)=L(g(x)).
\]
Here $L$ is regarded as a map of $D=g^q(D)$.  Since
$L|_{\partial D}=\id$, the two definitions agree on
$\partial g^{q-1}(D)$, and $G$ is a homeomorphism.  For $x\in D$ the first
$q-1$ transitions are unchanged, and hence
\[
 G^q(x)=L(g^q(x))=L(x).
\]
The $C^0$ estimate follows from the diameter of $D$ and its finitely many
images.  The remaining assertions follow by transporting chains,
pseudo-orbits and tracing orbits through the finitely many uniformly
continuous transition maps between the levels of the tower.  Finally,
$G^q|_{\widehat D}$ is the disjoint union of $q$ conjugate copies of $L$,
which gives the entropy formula.
\end{proof}

For $n=2$ let $L_n$ be the square homeomorphism supplied by
Corollary~\ref{cor:2d-square}, restricted to a slightly smaller square and
extended by the identity through a collar.  For $n\ge3$ use the cube
homeomorphism of Theorem~\ref{thm:example}, together with the consecutive
localized Denjoy--Rees modifications of Proposition~\ref{prop:L3}. In both
cases we obtain the following local block: for every $n\ge2$ there are
$L_n\in\Homeo([0,1]^n)$, a chain component $D_n\subset(0,1)^n$, and an
open neighborhood $W_n\subset(0,1)^n$ of $D_n$ such that
\begin{enumerate}[label=\textup{(L\arabic*)}]
\item $L_n|_{\partial[0,1]^n}=\id$, $\htop(L_n)<\infty$, and
      $([0,1]^n,L_n)$ does not have shadowing;
\item $L_n|_{D_n}$ is transitive, $D_n\subseteq\Sh(L_n)$, and
      $(D_n,L_n|_{D_n})$ does not have shadowing;
\item every chain component $C$ of $L_n$ meeting $W_n$ fails to have the
      shadowing property.
\end{enumerate}
The collar does not affect these assertions, because $W_n$ is chosen with
closure disjoint from the collar.
For $n\ge3$, property \textup{(L3)} is provided by
Proposition~\ref{prop:L3}. For $n=2$ it follows from
Theorem~\ref{thm:2d-annulus}(3) via Corollary~\ref{cor:2d-square}, since
every chain component of the square homeomorphism fails shadowing; thus
$W_2$ may be any open neighbourhood of $D_2$ with closure in the open
square.
The above insertion procedure now yields the proof of the first main
theorem.


\begin{proof}[Proof of Theorem~\ref{mainA}]
Fix $f\in\Homeo(M)$ and a $C^0$-neighbourhood $\mathcal U$ of $f$. By Lemma~\ref{lem:C0-closing}, after an arbitrarily small perturbation we obtain $f_1\in\mathcal U$ with a periodic point $p$ of least period $q$. If the local return reverses orientation, use the orbit-doubling modification from Remark~\ref{rem:orientation-return}.

Apply Lemma~\ref{lem:periodic-spot} in a neighbourhood of the periodic orbit chosen so small that the resulting map $g$ still belongs to $\mathcal U$ and possesses a periodic spot $D_0$ of period $q$. Identify $D_0$ with $[0,1]^n$ and insert the local block $L_n$ by Lemma~\ref{lem:periodic-insertion}, choosing the spot sufficiently small that the resulting homeomorphism $T$ remains in $\mathcal U$.

Put
\[
 D=\bigcup_{i=0}^{q-1}T^i(D_n),
 \qquad
 V=\bigcup_{i=0}^{q-1}T^i(W_n),
\]
Here $D_n$ and $W_n$ are transported through the chosen cube coordinate.
Lemma~\ref{lem:periodic-insertion} shows that $D$ is a chain
component of $T$, that $T|_D$ is transitive, and that every point of $D$
is shadowable, hence positively shadowable, for $T$. The same lemma shows
that $(D,T|_D)$ does not have the shadowing property. More generally,
every chain component meeting $V$ corresponds, under the first-return
map, to a chain component of $L_n$ meeting $W_n$, and therefore fails
shadowing by property~\textup{(L3)}.

The nonshadowable pseudo-orbits in the local block remain nonshadowable in the ambient manifold, so $(M,T)$ does not have the shadowing property. Finally,
\[
 h_{\mathrm{top}}(T|_{\widehat D})=\frac1q h_{\mathrm{top}}(L_n)<\infty,
\]
by Lemma~\ref{lem:periodic-insertion}. For $n\ge3$, the construction of
$L_n$ starts from the structurally stable Axiom~A maps of
Theorem~\ref{thm:Plykin}. The inserted Cantor system in
Theorem~\ref{thm:abstract} has positive finite entropy, so the
distinguished component of $L_n$ has positive finite entropy. The passage
around the periodic spot gives
\[
 0<h_{\mathrm{top}}(T|_D)
   =\frac1q h_{\mathrm{top}}(L_n|_{D_n})<\infty.
\]
This proves the last assertion.
\end{proof}
%

\subsection{A complementary realization on compact metric spaces}
\label{sec:compact-realization}

The manifold theorem realizes classical shadowable points as a coherent
chain component inside a familiar geometric phase space.  On general
compact metric spaces one can go further and prescribe the dynamics on
the entire set of shadowable points. The following theorem is another evidence
that the local-maximality hypothesis in Corollary~\ref{cor:loc_max} cannot
be replaced merely by closedness and invariance of that set.

\begin{theorem}\label{extthm}
Let $X$ be the Cantor set and let $T\colon X\to X$ be a homeomorphism.
There are a compact metric space $\mathcal M$ containing a copy of $X$
and a homeomorphism $f\colon\mathcal M\to\mathcal M$ such that
$f|_X=T$ and
\[
             Sh(f)=Sh^+(f)=X.
\]
If $Y\subset\mathcal M$ is compact and $f$-invariant and $f|_Y$ has the
shadowing property, then $Y\subset X$.  Moreover, $\mathcal M$ can be
chosen so that every Toeplitz subsystem of $f$ is contained in $X$.
\end{theorem}

\begin{proof}
If $T$ has the shadowing property, take $\mathcal M=X$ and $f=T$.
We therefore assume that $T$ does not have shadowing.

Choose nested finite clopen partitions
$\mathcal P_n$ of $X$ whose meshes tend to zero. Let $G_n$ be the
directed graph with vertex set $\mathcal P_n$ and an edge $U\to V$
whenever $T(U)\cap V\ne\emptyset$, and let
$(X_n,\sigma_n)$ be its two-sided edge shift.  For each $U\in\mathcal
P_n$, embed the compact zero-dimensional set
$[U]\cap X_n$ into $U$, using disjoint images for distinct $U$. In this way we regard $X_n$ as a compact subset of $X$, with
$x\in U$ whenever the zeroth symbol of $x\in X_n$ is $U$.

The embeddings may be chosen independently and their only relevant
property is
\begin{equation}\label{eq:layer-approximation}
\begin{split}
 \sup_{x\in X_n}d\bigl(\sigma_n(x),T(x)\bigr)&\longrightarrow0,\\
 \sup_{x\in X_n}d\bigl(\sigma_n^{-1}(x),T^{-1}(x)\bigr)&\longrightarrow0.
\end{split}
\end{equation}
Indeed, if the zeroth and first symbols of $x$ are $U$ and $V$, choose
$z\in U$ with $T(z)\in V$. Both $x,z$ belong to $U$, whereas
$\sigma_n(x),T(z)$ belong to $V$. Uniform continuity of $T$ and
$\operatorname{mesh}(\mathcal P_n)\to0$ give the first limit.  The second
follows in the same way from $T^{-1}$.

For every $n$, choose an infinite minimal subshift
$(N_n,\tau_n)$ which is not Toeplitz, and realize $N_n$ as a compact
subset of $[0,\frac{1}{2n}]$. Equip
$X\times[0,1]\times[0,1]$ with the sum metric and put
\[
 Z=X\times\{0\}\times\{0\},\qquad
 Z_n=X_n\times N_n\times\left\{\frac{1}{n}\right\},
\]
and
\[
 \mathcal M=Z\cup\bigcup_{n\ge1}Z_n
       \subset X\times[0,1]\times[0,1].
\]
The set $\mathcal M$ is closed in the compact product because every sequence
with layer indices $n_k$ tending to infinity accumulates in $Z$.  Define
\[
 f(x,q,r)=
 \begin{cases}
   (T(x),0,0),& (x,q,r)\in Z,\\
   (\sigma_n(x),\tau_n(q),\frac{1}{n}),& (x,q,r)\in Z_n.
 \end{cases}
\]
Each $Z_n$ is a clopen invariant layer.  Equation
\eqref{eq:layer-approximation}, together with
$\diam N_n\to0$ and $\frac{1}{n}\to0$, proves continuity of $f$ at $Z$.
The second limit in \eqref{eq:layer-approximation} proves continuity
of the inverse there.  Thus $f$ is a homeomorphism and $f|_Z$ is
conjugate to $T$.

\begin{figure}[H]
\centering
\begin{tikzpicture}[
  x=1cm,y=1cm,
  every node/.style={text=black,font=\small},
  layer/.style={draw=black,fill=gray!9,rounded corners=2pt,line width=.8pt},
  dynamics/.style={draw=black,->,line width=.8pt},
  guide/.style={draw=black,densely dashed,line width=.6pt}
]
  \node at (-5.3,4.38) {$r$};

  \draw[layer] (-4.65,3.52) rectangle (4.65,4.0);
  \draw[layer] (-4.7,2.52) rectangle (4.7,3.0);
  \draw[layer] (-4.75,1.72) rectangle (4.75,2.2);
  \draw[layer] (-4.8,.79) rectangle (4.8,1.27);
  \draw[layer,fill=gray!14,line width=1pt] (-4.85,-.03) rectangle (4.85,.45);

  \node[anchor=east] at (-5.15,3.75) {$1$};
  \node[anchor=east] at (-5.05,2.75) {$\frac{1}{2}$};
  \node[anchor=east] at (-5.05,1.95) {$\frac{1}{3}$};
  \node[anchor=east] at (-5.05,1.02) {$\frac{1}{n}$};
  \node[anchor=east] at (-5.15,.18) {$0$};

  \node[anchor=west] at (-4.42,3.75)
    {$Z_1=X_1\times N_1\times\{1\}$};
  \node[anchor=west] at (-4.42,2.75)
    {$Z_2=X_2\times N_2\times\{\frac{1}{2}\}$};
  \node[anchor=west] at (-4.42,1.95)
    {$Z_3=X_3\times N_3\times\{\frac{1}{3}\}$};
  \node[anchor=west] at (-4.42,1.02)
    {$Z_n=X_n\times N_n\times\{\frac{1}{n}\}$};
  \node[anchor=west] at (-4.62,.18)
    {$Z=X\times\{0\}\times\{0\}$};

  \draw[dynamics] (1.35,3.63) -- (4.15,3.63)
    node[midway,above] {$\sigma_1\times\tau_1$};
  \draw[dynamics] (1.35,2.63) -- (4.15,2.63)
    node[midway,above] {$\sigma_2\times\tau_2$};
  \draw[dynamics] (1.35,1.83) -- (4.15,1.83)
    node[midway,above] {$\sigma_3\times\tau_3$};
  \draw[dynamics] (1.35,0.9) -- (4.15,0.90)
    node[midway,above] {$\sigma_n\times\tau_n$};
  \draw[dynamics] (1.35,.06) -- (4.15,.06)
    node[midway,above] {$T$};

  \node at (0,1.6) {$\vdots$};
  \draw[guide] (-4.85,.45) -- (-4.65,3.5);
  \draw[guide] (4.85,.45) -- (4.65,3.5);
  \draw[densely dashed,->,line width=.7pt]
    (-.35,1.72) -- (-.35,.43) node[midway,left] {$p$};
\end{tikzpicture}
\caption{The compact space
$\mathcal M=Z\cup\bigcup_{n\ge1}Z_n$.  The clopen invariant layers
$Z_n$ accumulate on the base copy $Z\cong X$, and $p$ is the projection
used in the tracing argument.  Horizontal arrows represent the dynamics. No orbit passes between layers.
\label{fig:compact-layers}
}
\end{figure}

We next prove $Z\subset Sh(f)$.  Let
$p\colon\mathcal M\to Z$ be the projection
$p(x,q,r)=(x,0,0)$.  Given $\eps>0$, first choose $r$ so large that
\[
 \operatorname{mesh}(\mathcal P_r)+\frac{3}{2r}<\frac{\eps}{2}.
\]
Since $\mathcal P_r$ is a finite clopen partition, there is $\xi>0$
such that points at distance less than $\xi$ belong to the same atom of
$\mathcal P_r$. Consequently, if $(b_i)_{i\in\mathbb Z}$ is a
two-sided $\xi$-pseudo-orbit of $T$ and $U_i\in\mathcal P_r$ contains
$b_i$, then $T(b_i)\in U_{i+1}$. Hence $(U_i)_{i\in\mathbb Z}$ is an
admissible path in $G_r$, and there is $y\in X_r$ whose
$\sigma_r$-orbit has symbol $U_i$ at every time $i\in\mathbb Z$.

By \eqref{eq:layer-approximation}, choose $N\ge r$ so large that
\[
 \sup_{m\ge N}\sup_{x\in X_m}
 d\bigl(T(x),\sigma_m(x)\bigr)<\frac{\xi}{3},
 \qquad
 \frac{3}{2N}<\frac{\eps}{2}.
\]
The finite union $\bigcup_{m<N}Z_m$ and its complement are invariant and
positively separated. Consequently, for sufficiently small
$\delta>0$, every two-sided $\delta$-pseudo-orbit
$(a_i)_{i\in\mathbb Z}$ with $a_0\in Z$ avoids these finitely many
layers. Taking also $\delta<\frac{\xi}{3}$, the projected sequence
$(p(a_i))_{i\in\mathbb Z}$ is a two-sided $\xi$-pseudo-orbit of $T$.
Choose $y\in X_r$ as in the preceding paragraph and any $q\in N_r$. 
For every $i\in\mathbb Z$, the first coordinates of
$f^i(y,q,\frac{1}{r})$ and $a_i$ belong to the same atom of $\mathcal P_r$. 
The definition of the sum metric and the choices of $r$ and $N$ therefore
give
\[
 d\bigl(f^i(y,q,\frac{1}{r}),a_i\bigr)
 \leq \operatorname{mesh}(\mathcal P_r)
      +\frac{3}{2r}+\frac{3}{2N}<\eps.
\]
Thus $(y,q,\frac{1}{r})$ traces $(a_i)_{i\in\mathbb Z}$, proving
$Z\subset Sh(f)$.
It remains to exclude the isolated layers even from $\Sh^+(f)$.  Fix
$z=(x,q,\frac{1}{n})\in Z_n$.   Because $Z_n$ is isolated, a sufficiently
accurate pseudo-orbit starting at $z$, as well as every sufficiently
close tracing orbit, remains in $Z_n$. Every positive pseudo-orbit of
$\tau_n$ through $q$ lifts to one of $f|_{Z_n}$ by using the exact
first-coordinate orbit $(\sigma_n^i(x))_{i\ge0}$. Thus
$z\in Sh^+(f)$ would imply $q\in Sh^+(\tau_n)$.  Minimality gives
$H(q)=N_n$, so Theorem~\ref{shadonH} would imply that $\tau_n$ has
shadowing.  This is impossible for an infinite minimal subshift,
since a subshift with the shadowing property is of finite type \cite{Wa78}
and hence contains a periodic point \cite{LM}.  Therefore $\Sh^+(f)\subset Z$.  Together with
$Z\subset Sh(f)\subset Sh^+(f)$, this gives
\[
             Sh(f)=Sh^+(f)=Z\cong X.
\]

Suppose now that $Y\subset\mathcal M$ is compact and invariant and that
$f|_Y$ has shadowing. If $Y_n=Y\cap Z_n$ were nonempty, then $Y_n$
would be a clopen invariant subset of $Y$, so $f|_{Y_n}$ would also
have shadowing.  In the symbolic coordinates, $Y_n$ is a subshift of
$X_n\times N_n$. A subshift has shadowing exactly when it is of
finite type \cite{Wa78}. Hence $Y_n$ would be a shift of finite type.   Its
projection onto $N_n$ is a nonempty compact invariant set and therefore
equals $N_n$ by minimality. This would make $N_n$ a sofic shift.
An infinite minimal subshift is not sofic \cite{LM}, a contradiction.
Hence $Y$ meets no $Z_n$ and therefore $Y\subset Z$.

Finally, let $A\subset\mathcal M$ be a Toeplitz subsystem.  Since $A$
is minimal and the $Z_n$ are clopen and invariant, either $A\subset Z$
or $A\subset Z_n$ for some $n$.   In the latter case the second
coordinate projection maps $A$ onto $N_n$. Factors of Toeplitz systems
are Toeplitz \cite{Toeplitz}, contradicting the choice of $N_n$. Thus
every Toeplitz subsystem is contained in $Z\cong X$.
\end{proof}

Theorem~\ref{extthm} complements the manifold realization in two ways.
	 It prescribes arbitrary dynamics on the entire set of classical
shadowable points, and it does so without deriving those points from a
 shadowing property or intrinsically shadowing subsystem.

\section{Part II: Dynamical consequences of local shadowing}\label{sec:part-consequences}

Part~I established that classical pointwise shadowability can occur
independently of the global shadowing property or its intrinsic presence in subsystems.
That was a separation: local tracing need not originate from any surrounding shadowing. This part establishes the complementary consequence. Although local shadowing is weaker than global shadowing, it is far from dynamically inert; combined with mild recurrence it already forces symbolic dynamics and positive entropy.
We now study dynamical properties that can be derived from the local shadowing property.
Theorems~\ref{mainB} and~\ref{mainC} show that local tracing
produces entropy-bearing subsystems and gives qualitative and quantitative
approximation of invariant measures by ergodic measures. At the end of
the part, Corollary~\ref{cor:dense-consequences} applies this mechanism
back to the positive-entropy manifold components, including those constructed in Part~I.
The argument proceeds from measure supports to a semi-horseshoe coding
mechanism, then to entropy-controlled measure approximation, and finally to
expansivity applications. 

\subsection{Shadowable measures}\label{SectioninvS}

Shadowable measures provide the bridge from local pointwise tracing to the
measure-approximation results below. For that bridge to be effective, a
measure-level hypothesis must give pointwise shadowability on the whole
support with uniform constants. Theorem~\ref{unifsupp} supplies precisely
this uniformization, and the subsequent criterion identifies invariant
positively shadowable measures with the presence of a chain-recurrent point
in $\Sh^+(f)$.  The measure concept was introduced in \cite{LMo} as a tool
for studying measure-theoretic stability.  We begin by recalling it.

\begin{definition}\label{SM}
    A probability measure $\mu$ is shadowable if for every $\eps>0$ there are $B\subset M$ and $\delta>0$ such that $\mu(B)=1$ and every two-sided $\delta$-pseudo-orbit through $B$ (equivalently, with $x_0\in B$) is $\eps$-shadowed.
\end{definition}

Similarly to the case of shadowable points, some arguments below require only forward pseudo-orbits. We therefore record the corresponding weaker notion for measures:

\begin{definition}\label{PSM}
     A probability measure $\mu$ is positively shadowable if for every $\eps>0$ there are $B\subset M$ and $\delta>0$ such that $\mu(B)=1$ and every positive $\delta$-pseudo-orbit through $B$ (that is, with $x_0\in B$) is $\eps$-shadowed.
\end{definition}

Results below are stated at this weaker positive level because their proofs use only forward pseudo-orbits. They therefore apply automatically to classical shadowable measures, because every shadowable measure is positively shadowable. Finally, let us highlight the fact that similarly to Section \ref{SectionSP}, the main issue is to uniformize the pointwise tracing constants.

\begin{theorem}\label{unifsupp}
    If $\mu$ is a positively shadowable measure, then $\su(\mu)\subset Sh^+(f)$ 
\end{theorem}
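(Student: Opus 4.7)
The strategy is to fix $x \in \su(\mu)$ and an $\eps > 0$, then produce, via the positive shadowability of $\mu$, a set $B$ with $\mu(B) = 1$ and a constant $\delta' > 0$ at scale $\eps/2$. Since $\mu(B) = 1$ and $x \in \su(\mu)$, every open neighborhood of $x$ has positive $\mu$-measure and hence meets $B$; consequently $x \in \overline{B}$, and we can pick points of $B$ arbitrarily close to $x$. This is the key measure-theoretic observation that links positively shadowable measures to positively shadowable points. The rest of the argument will consist of replacing the initial point $x$ of an arbitrary positive $\delta$-pseudo-orbit through $x$ by a nearby $y \in B$, verifying that the resulting sequence is a pseudo-orbit through $B$, and converting the shadow provided by the measure into a shadow of the original pseudo-orbit.

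To make this precise, by uniform continuity of $f$ choose $\delta'' > 0$ with $d(p,q) < \delta''$ implying $d(f(p),f(q)) < \delta'/2$, and set $\delta = \min\{\delta''/2,\, \delta'/2,\, \eps/2\}$. Given a positive $\delta$-pseudo-orbit $(x_i)_{i \geq 0}$ with $x_0 = x$, pick $y \in B$ with $d(y,x) < \min\{\delta'',\, \eps/2\}$. Define $y_0 = y$ and $y_i = x_i$ for $i \geq 1$. For $i \geq 1$ we have $d(f(y_i), y_{i+1}) = d(f(x_i), x_{i+1}) < \delta \leq \delta'$, while
\[
 d(f(y_0), y_1) \leq d(f(y), f(x)) + d(f(x), x_1) < \tfrac{\delta'}{2} + \delta \leq \delta',
\]
so $(y_i)$ is a positive $\delta'$-pseudo-orbit starting in $B$. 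By positive shadowability of $\mu$ there exists $z \in M$ with $d(f^i(z), y_i) < \eps/2$ for every $i \geq 0$. Then $d(z, x) \leq d(z, y) + d(y, x) < \eps$ and $d(f^i(z), x_i) < \eps/2 < \eps$ for $i \geq 1$, so $z$ is an $\eps$-shadow of $(x_i)$. This shows $x \in Sh^+(f)$.

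\textbf{Main obstacle.} The only nontrivial point is handling the first pseudo-orbit condition after the substitution $x_0 \mapsto y$: a priori the jump $d(f(y), x_1)$ could be much larger than $\delta$. Absorbing this through uniform continuity of $f$, together with a careful choice of $\delta$ as a fraction of $\delta''$ and $\delta'$, is what makes the replacement work. Everything else is bookkeeping around the standard fact that a full-measure set is dense in the support of the measure.
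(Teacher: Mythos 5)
Your proof is correct, and it follows essentially the same strategy as the paper: replace the initial point of the pseudo-orbit by a nearby point of a full-measure set $B$, use uniform continuity to absorb the single extra jump in the pseudo-orbit condition, and then transfer the shadow. The main difference is organizational. The paper first intersects the sets $B_n$ obtained from the definition at scales $1/n$ to produce a single full-measure set with shadowing constants uniform in $\eps$, then runs the continuity argument once on that fixed $B$. You instead work at a single scale $\eps/2$, take the set $B$ provided by the definition for that scale, and use the observation that full-measure sets meet every open set of positive $\mu$-measure to find $y\in B$ near $x\in\su(\mu)$. This bypasses the diagonalization/intersection step entirely and is slightly leaner; it buys nothing extra in generality, but it is a cleaner path to the same conclusion. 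Your identification of the genuine obstacle (the jump $d(f(y),x_1)$ after the substitution) is exactly the crux, and your choice $\delta=\min\{\delta''/2,\delta'/2,\eps/2\}$ closes it correctly.
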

    \begin{proof}
The proof is divided into two parts. First, we find a full-measure set of positively shadowable points with uniform positive tracing constants.  
        We then extend those constants to $\su(\mu)$.

    Since $\mu$ is positively shadowable, for every $n>0$, there are $\delta_n>0$ and $B_n\subset M$ such that $\mu(B_n)=1$ and every positive $\delta_n$-pseudo-orbit through $B_n$ is $\frac{1}{n}$-shadowed. Let us consider
    $$B=\bigcap_{n=1}^{\infty} B_n.$$
    Therefore $\mu(B)=1$ and, by the construction, for every $\eps>0$, there is $\eta>0$ such that every positive $\eta$-pseudo-orbit through $B$ is shadowed with the prescribed accuracy.

    Notice that we cannot apply Theorem \ref{localshad} here, because $B$ is not necessarily compact. On the other hand, since the shadowing constants are now uniform, we can proceed to extend the shadowing constants to $\overline{B}$ as follows.

     Fix $\eps>0$, $x\in B$, and let $\eta>0$ be given by the uniform positive shadowability on $B$ with accuracy $\frac{\eps}{2}$. Choose $0<\delta\leq\min\{\tfrac{\eta}{2},\tfrac{\eps}{2}\}$ such that if $y,z\in M$ and $d(y,z)\leq \delta$, then $$d(f(y),f(z))\leq\frac{\eta}{2}.$$ Fix any $y\in B_{\delta}(x)$, and let $(x_i)$ be a positive $\delta$-pseudo-orbit through $y$. Therefore, $d(f(y),x_1)\leq \delta$. 
     Consider the sequence $$x'_i=\begin{cases} x_i, \textrm{ if } i\neq 0\\
     x, \textrm{ if } i=0.
         
     \end{cases}  $$
    Exactly as in the proof of Theorem \ref{localshad},  $(x'_i)$ is $\frac{\eps}{2}$-shadowed by some point $z$. It turns out that $z$ is an $\eps$-shadow for $(x_i)$. So, for every $\eps>0$, there is $\delta>0$ such that every positive $\delta$-pseudo-orbit through $B_{\delta}(B)$ is $\eps$-shadowed.
Finally, notice that  $$\su(\mu)=\bigcap_{\delta>0}\overline{B_{\delta}(B)}.$$
    Since $\su(\mu)\subset\overline{B}$, the proof is complete.
    
    \end{proof}

Putting the preceding results together gives a criterion for invariant positively shadowable measures.

\begin{theorem}
    The map $f$ admits an invariant positively shadowable measure if and only if $CR(f)\cap Sh^+(f)\neq\emptyset$.
\end{theorem}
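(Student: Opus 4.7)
The plan is to exploit the groundwork already laid: Theorem~\ref{unifsupp} pins the support of a positively shadowable measure inside $Sh^+(f)$, Theorem~\ref{ShadClass} spreads positive shadowability from $x\in CR(f)\cap Sh^+(f)$ over the whole chain-recurrence class $H(x)$, and Theorem~\ref{localshad} uniformizes the shadowing constants on any compact subset of $Sh^+(f)$. With these three tools, the equivalence reduces to a standard Krylov--Bogolyubov argument.

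For the forward implication, I would take an invariant positively shadowable measure $\mu$ and invoke Theorem~\ref{unifsupp} to conclude $\su(\mu)\subset Sh^+(f)$. Since $\mu$ is $f$-invariant, $\su(\mu)$ is a nonempty compact $f$-invariant set, so by Zorn's lemma it contains a minimal set $N$. Every point of $N$ is recurrent and hence lies in $\Omega(f)\subset CR(f)$. Picking any $x\in N$ yields $x\in \su(\mu)\cap CR(f)\subset Sh^+(f)\cap CR(f)$, so this intersection is nonempty.

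For the reverse implication, I would start from a point $x\in CR(f)\cap Sh^+(f)$ and apply Theorem~\ref{ShadClass} to obtain $H(x)\subset Sh^+(f)$. Since $H(x)$ is compact and $f$-invariant, Krylov--Bogolyubov applied to the subsystem $f|_{H(x)}$ produces an invariant probability measure $\mu$ with $\su(\mu)\subset H(x)$. To check that $\mu$ is positively shadowable, fix $\eps>0$ and apply Theorem~\ref{localshad} to the compact set $K=H(x)\subset Sh^+(f)$: this gives $\delta>0$ such that every positive $\delta$-pseudo-orbit starting in $B_\delta(H(x))$ is $\eps$-shadowed. Setting $B=H(x)$, we have $\mu(B)=1$ and $B\subset B_\delta(H(x))$, so every positive $\delta$-pseudo-orbit through $B$ is $\eps$-shadowed. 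Thus $\mu$ satisfies Definition~\ref{PSM}.

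There is no real obstacle here; the content lies in the prior results. The only point that deserves a line of care is verifying that "through $B$" (meaning $x_0\in B$, as in the pointwise definition) is correctly lined up with the hypothesis of Theorem~\ref{localshad} (which shadows $\delta$-pseudo-orbits whose initial point lies in $B_\delta(K)$); since $B=H(x)\subset B_\delta(H(x))$, this matching is automatic, and the proof assembles in a few lines.
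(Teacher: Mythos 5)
Your proof is correct and follows essentially the same route as the paper: Theorem~\ref{unifsupp} for the forward direction, and Theorem~\ref{ShadClass} plus Krylov--Bogolyubov plus Theorem~\ref{localshad} for the converse. The only cosmetic difference is that you extract a recurrent point from a minimal subset of $\su(\mu)$ where the paper invokes Poincar\'e recurrence; both yield a point of $\su(\mu)\cap CR(f)$ and your version is, if anything, slightly more careful in spelling out why Theorem~\ref{localshad} applies.
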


\begin{proof}
First suppose that $\mu$ is an invariant positively shadowable measure
for $f$.  Theorem~\ref{unifsupp} gives
$\su(\mu)\subset Sh^+(f)$.  By the Poincar\'e recurrence theorem,
$\su(\mu)$ contains a recurrent point, and hence
$\Sh^+(f)\cap CR(f)\ne\emptyset$.

Conversely, suppose there is $x\in Sh^{+}(f)\cap CR(f)$. By Theorem \ref{ShadClass} $H(x)\subset Sh^+(f)$. Since $H(x)$ is compact and invariant, it supports an invariant measure $\mu$ satisfying $\su(\mu)\subset H(x)$. Since $\su(\mu)$ is compact, Theorem \ref{localshad} implies $\mu$ is positively shadowable.
\end{proof}

Thus invariant positively shadowable measures do not introduce an
independent source of tracing. Namely, their supports consist of positively
shadowable points, and recurrence places those supports in the chain
classes on which the propagation results of Section~\ref{SectionSP}
operate. We next exploit this classwise tracing to build symbolic
dynamics.

\subsection{Local shadowing and semi-horseshoes}\label{SectionLocEntr}

This subsection supplies the coding lemma used in Part~II.  Lemma~\ref{lemma}
turns finitely many separated return pseudo-orbits through one positively
shadowable point into a semi-horseshoe. Theorems~\ref{nonminimal}
and~\ref{senshors} then produce those pseudo-orbits in the two relevant
cases: a nonminimal chain class, or a minimal non-equicontinuous class.

\begin{lemma}\label{lemma}
Let $f\colon M\to M$ be a homeomorphism and $x\in Sh^+(f)$. Fix
$\eps>0$, and let $\delta>0$ be a corresponding positive tracing
constant at $x$. Suppose there are $n>0$ and $k$ distinct
$\delta$-pseudo-orbits
$X_j=(x_i^j)_{0\le i<n}$, $j=0,\ldots,k-1$, from $x$ to $x$, such that
for every $j\ne j'$ there is $0\le i<n$ with
\[
d(x_i^j,x_i^{j'})>4\eps.
\]
Then there are a compact invariant set $K\subset M$ and an
$f^n$-invariant compact set $\widehat K\subset K$ such that
$f^n|_{\widehat K}$ factors onto the full shift
$\sigma\colon\Sigma_k\to\Sigma_k$. In particular, $f$ has a
semi-horseshoe. Moreover,
\[
K=\bigcup_{j=0}^{n-1}f^j(\widehat K),
\]
and for every $z\in\widehat K$ and $s\in\mathbb Z$ there is
$j\in\{0,\ldots,k-1\}$ such that
\[
d(f^{sn+i}(z),x_i^j)<\eps
\quad\text{for }i=0,\ldots,n-1.
\]
\end{lemma}

\begin{proof}
Let $x\in Sh^+(f)$, fix $\eps>0$, and let $\delta>0$ be provided by the positive shadowability of $x$. For this $\delta$, select $k$ finite $\delta$-pseudo-orbits $X_j=(x^j_i)$ satisfying the assumptions of the lemma and let $n$ be the length of $X_j$. Let $\sigma:\Sigma_k\to \Sigma_k$ be the two-sided full shift of $k$ symbols. We shall use positive pointwise shadowability to construct a subsystem $f^n|_{\hat K}$ that factors over $\sigma$. The proof differs from \cite[Lemma 2.5]{Ka1}, where the map need not be invertible and the resulting subsystem factors over a one-sided shift. Here $f$ is a homeomorphism and the desired factor is the two-sided shift. Since only forward tracing is assumed, an additional compactness argument is needed to recover the negative coordinates. 

The construction is as follows. First, for each $s=(s_l)_{l\in \mathbb{Z}}\in\Sigma_k$ construct a two-sided $\delta$-pseudo-orbit $(x_m)_{m\in\mathbb{Z}}$  through $x$, by replacing $s_l$ by
$X_{s_l}$.
Next, for every $l>0$, construct the positive $\delta$-pseudo-orbits $$(x^l_m)_{m\geq0}=(x_m)_{m\geq -nl}.$$ In this way, every $(x^l_m)$ is a positive $\delta$-pseudo-orbit through $x$. Therefore, the positive shadowability of $x$ implies the existence of $\eps$-shadows of $(x^l_m)$. Denote by $Z^l$ the set of all such $\eps$-shadows. Next, define $$K_s=\{z\in B_{\eps}(x_0); z \textrm{ is a limit point of a sequence } (z_l)_{l\geq 0} \textit{ so that } f^{-nl}(z_l)\in Z^l)\}.$$
Observe that $K_s\neq\emptyset$, because for any choice of $w_l\in Z^l$ we have $f^{nl}(w_l)\in \overline{B}_{\eps}(x_0)$, so the sequence $(f^{nl}(w_l))_{l\geq 0}$ has limit points by compactness.

\vspace{0.1in}
\textit{\textbf{Claim:} Any $z\in K_s$ is an $\eps$-shadow for the two-sided $\delta$-pseudo-orbit $(x_m)_{m\in\mathbb{Z}}$ }
\vspace{0.1in}

To see why the claim holds, fix $z\in K_s$. Therefore, there is a sequence $z_l\in f^{nl}(Z^l)$ such that $z_l\to z$. So, the continuity of $f$ implies $$d(f^m(z),x_m)=\lim\limits_{l\to\infty}d(f^{m}(z_l),x_m).$$
Since by the construction, $$d(f^{m}(z_l),x_m)\leq \eps,$$
for $l$ big enough, we conclude $d(f^m(z),x_m)\leq \eps $, and the claim is proved.

Next, define $$\hat K=\bigcup_{s\in \Sigma}K_s.$$
It is straightforward to verify that $\hat K$ is closed and $f^n$-invariant. 
Define $$\pi:\hat K\to \Sigma_k,$$ by setting $\pi(z)=s$, whenever $z\in K_s$.
 It is immediate to verify that $\pi$ is surjective. To see that $\pi$ is well defined, observe that if $s,s'$ are distinct sequences on $\Sigma$, then any pair of points $z,z'\in K$ satisfying $\pi(z)=s$ and $\pi(z')=s'$ must be different. Indeed, this is due to the assumption  $d(x_i,y_i)>4\eps$. Finally, it is also immediate to verify  the conjugacy equation:
$$\pi\circ f^n=\sigma \circ\pi.$$
To conclude the proof, we write $K=\bigcup_{i=0}^{n-1}f^i(\hat K)$. Thus, the $f^n$-invariance of $\hat K$ implies the $f$-invariance of $K$ and the proof is done.

\end{proof}

\begin{lemma}\label{denseminimals}
    Let $x\in Sh^+(f)\cap CR(f)$. If $z\in H(x)$, then every neighborhood of $z$ contains a minimal point. 
\end{lemma}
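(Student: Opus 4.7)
The plan is to combine positive shadowableness at $z$ with the chain recurrence of $z$ in order to produce an orbit trapped near $z$ at regularly spaced times, and then to extract a minimal point from this orbit.

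First I would invoke Theorem~\ref{ShadClass} to get $H(x)\subset Sh^+(f)$, so that $z\in Sh^+(f)$; note also that $z\in H(x)\subset CR(f)$, so $z$ is chain-recurrent. Fix $\eps>0$ and let $\delta>0$ be provided by positive shadowableness of $z$ corresponding to $\eps$. Chain recurrence of $z$ yields a finite $\delta$-pseudo-orbit loop $(z_0,z_1,\ldots,z_n)$ with $z_0=z_n=z$; concatenating this loop with itself infinitely often produces a positive $\delta$-pseudo-orbit $(w_i)_{i\geq 0}$ through $z$ satisfying $w_{kn}=z$ for every $k\geq 0$. Positive shadowableness at $z$ then supplies a point $y\in M$ with $d(f^i(y),w_i)\leq\eps$ for all $i\geq 0$; in particular, $d(f^{kn}(y),z)\leq\eps$ for every $k\geq 0$.

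Next I would set $g=f^n$ and observe that the forward $g$-orbit of $y$ lies in the closed set $\overline{B_\eps(z)}$, so the $\omega$-limit set $\omega_g(y)$ is a nonempty compact $g$-invariant subset of $\overline{B_\eps(z)}$. Zorn's lemma produces a minimal set for $g$ inside $\omega_g(y)$; picking any $p$ in this minimal set gives a point that is uniformly recurrent under $f^n$ and satisfies $d(p,z)\leq\eps$.

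The final step is to upgrade $p$ from an $f^n$-minimal point to an $f$-minimal point. For any open neighborhood $U$ of $p$, uniform recurrence of $p$ under $f^n$ yields an integer $N$ such that every block of $N$ consecutive nonnegative integers $k$ contains one with $f^{nk}(p)\in U$; hence every block of $nN$ consecutive integers $m$ contains one with $f^m(p)\in U$, so $p$ is uniformly recurrent under $f$, i.e.\ minimal. Since $\eps>0$ was arbitrary, this places an $f$-minimal point inside every neighborhood of $z$. I do not expect a serious obstacle here: the crucial ingredients are the concatenation of pseudo-orbits (which forces the shadow to visit $\overline{B_\eps(z)}$ along the subsequence of multiples of $n$) and the short syndeticity argument that transfers minimality from $f^n$ to $f$ at the level of the individual point $p$.
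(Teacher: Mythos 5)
Your proposal is correct and follows essentially the same approach as the paper: build a periodic $\delta$-pseudo-orbit through $z$ (using chain-recurrence), shadow it to obtain a point $p$ whose $f^n$-orbit stays within $\eps$ of $z$, extract an $f^n$-minimal point from $\overline{O^+_{f^n}(p)}$, and transfer $f^n$-minimality to $f$-minimality. Your version is in fact slightly cleaner — you build the return loop directly at $z$ rather than detouring through a minimal subset $K$ as the paper does, and you spell out the syndeticity argument upgrading $f^n$-minimality to $f$-minimality, which the paper asserts without detail.
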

\begin{proof}
    Fix $z\in H(x)$.  By Theorem \ref{ShadClass}, $z$ is also positively shadowable. If $z$ is minimal, the result is trivial. So, suppose $z$ is not a minimal point. Therefore, $H(x)$ is not minimal. This implies the existence of a minimal proper subset $K\subset H(x)$. Since $z$ is not minimal, $z\notin K$.
     
Fix $y\in K$, $\eps>0$, and let $\delta>0$ be provided by the positive
shadowability of $z$. Since $z\sim y$, there are finite
$\delta$-pseudo-orbits
\[
X=(z,x_1,\ldots,x_{k_1-2},y)
\quad\text{and}\quad
Y=(y,y_1,\ldots,y_{k_2-2},z).
\]
So, concatenating these two $\delta$-pseudo-orbits we obtain a non-trivial $\delta$-pseudo-orbit $XY$ connecting $z$ to $z$.

    Next, we construct a periodic $\delta$-pseudo-orbit $(p_i)_{i\geq 0}$ with $p_0=z$ by infinite repetition of the cyclic concatenation of $X$ and $Y$. Let $N$ denote its period, so that $p_{kN}=z$ for every $k\geq 0$.
        By the positive shadowability of $z$ there is a point $p$ $\eps$-shadowing $(p_i)$. Moreover, $f^{kN}(p)\in \overline{B}_{\eps}(z)$ for every $k\geq0$, since $p_{kN}=z$. Then, let $q$ be a minimal point of $f^N$ in the compact, $f^N$-invariant set $\omega_{f^N}(p)\subset \overline{B}_{\eps}(z)$. Since a minimal point of $f^N$ is also a minimal point of $f$, $q$ is a minimal point for $f$ contained in $\overline{B_{\eps}(z)}$. Since $\eps$ was chosen arbitrarily, the proof is complete.

\end{proof}

We are now ready to prove the first semi-horseshoe criterion.

\begin{theorem}\label{nonminimal}
    Suppose there is $x\in Sh^+(f)\cap CR(f)$. If $H(x)$ is not a minimal set, then $f$ contains a semi-horseshoe.
\end{theorem}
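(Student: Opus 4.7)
The plan is to construct two $\delta$-pseudo-orbit loops at $x$ of the same length that are more than $4\eps$-apart at some coordinate, and then apply Lemma~\ref{lemma} with $k=2$ to extract a semi-horseshoe.

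\textbf{Step 1: a second minimal set.} Since $H(x)$ is not minimal, it properly contains a minimal set $M_1\subsetneq H(x)$. Pick any $z\in H(x)\setminus M_1$. By Lemma~\ref{denseminimals} applied inside a neighborhood $V$ of $z$ disjoint from $M_1$, we obtain a minimal point $w\in V$; its orbit closure $M_2:=\overline{O(w)}$ is a minimal set, necessarily disjoint from $M_1$ (distinct minimal sets are disjoint), so $D:=d(M_1,M_2)>0$. Fix $\eps>0$ with $5\eps<D$ and let $\delta>0$ be the positive-shadowing constant at $x$ corresponding to $\eps$. Shrinking $V$ so that $\diam(V)$ is much smaller than $\delta$, the $\delta/2$-pseudo-orbits from $x$ to $z$ and from $z$ to $x$ provided by $z\in H(x)$ can be endpoint-adjusted (replacing the terminal, respectively initial, occurrence of $z$ by $w$) to yield $\delta$-pseudo-orbits $P_2\colon x\to w$ and $Q_2\colon w\to x$. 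Since $y_1\in M_1\subset H(x)$, we also have $\delta$-pseudo-orbits $P_1\colon x\to y_1$ and $Q_1\colon y_1\to x$ for any fixed $y_1\in M_1$. Put $y_2:=w$.

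\textbf{Step 2: long loops inside each minimal set.} Uniform recurrence at the minimal points $y_j\in M_j$ makes the near-return set $N_j:=\{L\geq 1:d(f^L(y_j),y_j)<\delta\}$ syndetic, so we may pick $L_j\in N_j$ arbitrarily large. The concatenation
\[
\Gamma_j=P_j\,\cdot\,\bigl(y_j,f(y_j),\ldots,f^{L_j-1}(y_j)\bigr)\,\cdot\,Q_j
\]
is a $\delta$-pseudo-orbit loop at $x$ of length $n_j$ (the transition at the end of the orbit segment is $\delta$-close by the near-return property), and the orbit-segment coordinates, which lie entirely in $M_j$, account for more than $7/8$ of $n_j$ provided $L_j>7(|P_j|+|Q_j|)$.

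\textbf{Step 3: equalizing lengths and concluding.} To bypass any delicate length-matching between the two minimal systems, form the iterated loops
\[
X^0:=(\Gamma_1)^{n_2},\qquad X^1:=(\Gamma_2)^{n_1},
\]
which are $\delta$-pseudo-orbit loops at $x$ of common length $n:=n_1n_2$. The sets $A_1:=\{i:X^0_i\in M_1\}$ and $A_2:=\{i:X^1_i\in M_2\}$ each have cardinality $>(7/8)n$, so $|A_1\cap A_2|\geq|A_1|+|A_2|-n>\tfrac{3n}{4}>0$; at any $i_0\in A_1\cap A_2$,
\[
d(X^0_{i_0},X^1_{i_0})\geq d(M_1,M_2)=D>4\eps.
\]
Lemma~\ref{lemma} applied to the pair $X^0,X^1$ (i.e., $k=2$) then produces a compact $f$-invariant set $K\subset M$ and an $f^n$-invariant $\hat K\subset K$ with $f^n|_{\hat K}$ factoring onto the full shift $\sigma\colon\Sigma_2\to\Sigma_2$: this is a semi-horseshoe for $f$.

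\textbf{Main obstacle.} The two principal technical points are (i) producing pseudo-orbits reaching the second minimal set $M_2$ even though Lemma~\ref{denseminimals} supplies a minimal point $w$ whose orbit closure need not lie in $H(x)$, handled by an endpoint modification of pseudo-orbits through the nearby point $z\in H(x)$; and (ii) matching exactly the lengths of the two loops despite the possible incompatibility of return-time sets of $M_1$ and $M_2$, bypassed painlessly by the common-multiple trick $X^0=(\Gamma_1)^{n_2}$, $X^1=(\Gamma_2)^{n_1}$, at the cost that each iterated loop visits its minimal set many times — which is precisely what makes the density/pigeonhole argument succeed.
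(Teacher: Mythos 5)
Your overall architecture --- two $\delta$-pseudo-orbit loops at a positively shadowable base point, equalized in length by the common-multiple trick $(\Gamma_1)^{n_2}$, $(\Gamma_2)^{n_1}$, separated by more than $4\eps$ at some common coordinate, then Lemma~\ref{lemma} with $k=2$ --- is exactly the paper's. The gap is in Step~1, in the order in which the constants are chosen. You first pick the neighborhood $V$ of $z$, obtain the minimal point $w\in V$ and the set $M_2=\overline{O(w)}$, define $D=d(M_1,M_2)$, choose $\eps$ with $5\eps<D$, and only then obtain the shadowing constant $\delta=\delta(\eps)$; but the endpoint adjustment that produces the $\delta$-pseudo-orbits $P_2\colon x\to w$ and $Q_2\colon w\to x$ needs $d(w,z)\ll\delta$, i.e.\ it needs $V$ to be chosen \emph{after} $\delta$. ``Shrinking $V$'' at that stage does not help: Lemma~\ref{denseminimals} then yields a possibly different minimal point $w'$ in the smaller neighborhood, hence a different set $M_2'$ whose distance to $M_1$ may well be smaller than $5\eps$ --- there is no a priori lower bound on $d(M_1,\overline{O(w')})$ as the neighborhood shrinks, since $\overline{O(w')}$ only has to contain a point near $z$ and may simultaneously pass arbitrarily close to $M_1$. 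Keeping the original $w$ instead leaves $P_2,Q_2$ as $(\delta/2+\diam(V))$-pseudo-orbits only, and $w$ need not belong to $H(x)$, so chain-relatedness cannot be invoked to reconnect $x$ to $w$ at the new scale. As written, the final bound $d(X^0_{i_0},X^1_{i_0})\geq D>4\eps$ is therefore not justified.

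The gap is repairable, and the repair is essentially what the paper does: anchor $\eps$ to a quantity fixed \emph{before} $\delta$, namely $d(z,M_1)$ (the paper takes $\eps=\tfrac15 d(z,K)$), and arrange the separating index so that one loop is close to $z$ there while the other lies in $M_1$ there. In your density framework this means replacing $A_2$ by the set of times at which the second loop is $\eps$-close to $w$; by uniform recurrence this set has density at least $1/(2S)$ for some syndeticity bound $S$, and $S$ may legitimately depend on $w$ because it is only used afterwards, to choose $L_1$ so large that $|A_1|>n\bigl(1-1/(4S)\bigr)$. The pigeonhole then gives an index $i_0$ with $d(X^0_{i_0},X^1_{i_0})\geq d(z,M_1)-\eps-\diam(V)>4\eps$. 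The paper avoids the density count altogether by forcing the $j\#C_1$-th coordinate of one loop to equal $z$ exactly while the corresponding coordinate of the other lies in $K$.
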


\begin{proof}
In view of Lemma \ref{lemma}, we only need to find $\eps>0$ and a positive pointwise shadowing constant $\delta$ for which two suitably separated pseudo-orbits can be generated.
By assumptions $H(x)$ is not a minimal set.  Hence, there exists a proper minimal subset $K\subset H(x)$. Fix $z\in H(x)\setminus K$. By Theorem \ref{ShadClass}, $z\in Sh^+(f)$.  We then set  $$\eps=\frac{1}{5}d(z,K)>0.$$
Fix  $0<4\delta<\eps$. First, by Lemma~\ref{denseminimals} we can find  minimal sets arbitrarily close to $z$. In particular, we can construct a non-trivial $\delta$-pseudo-orbit $C_1$ connecting $z$ to $z$. On the other hand, if we fix $y\in K$, since $y\sim z$ there are two finite $\delta$-pseudo-orbits $A_1$ and $A_2$ connecting $y$ to $z$ and $z$ to $y$, respectively. Let $A_3$ be a $\delta$-pseudo-orbit from $y$ to $y$ contained completely in $K$. If we define 
  $$C_2=A_1A_3\cdots A_3A_2,$$ where  $A_3$ is beign repeated sufficiently many times so that   $C_2$ is a $\delta$-pseudo-orbit from $z$ to $z$ and there is $j$ such that $j\#C_1$-th element of $C_2$ belongs to $K$. Let $n_1$ and $n_2$ be the lengths of $C_1$ and $C_2$, respectively. Write $n=n_1n_2$. Therefore, by defining 
$$X_1=C_1C_1\cdots C_1 \textrm{ and } X_2=C_2C_2\cdots C_2, $$
so that both $X_1$ and $X_2$ have $n$ elements, obtain the all the assumptions of Lemma \ref{lemma}. Namely, $j\#C_1$-th element of $X_1$ is $z$, while corresponding element of $X_2$ belongs to $K$, so they are at distance larger than $4\eps$. Therefore, $f$ admits a semi-horseshoe. 

\end{proof}

We also can obtain the semi-horseshoe for $f$ when $H(x)$ is minimal, under extra assumptions.

\begin{theorem}\label{senshors}
   Suppose there is $x\in Sh^+(f)\cap CR(f)$. If $x$ is non-periodic, $H(x)$ is minimal and  not equicontinuous then 
   $f$ contains a semi-horseshoe.
\end{theorem}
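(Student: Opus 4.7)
The plan is to verify the hypotheses of Lemma \ref{lemma} with $k=2$, producing two distinct length-$n$ $\delta$-pseudo-orbits from $x$ to $x$ that separate by more than $4\varepsilon$ at some coordinate.

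Since $H(x)$ is minimal and not equicontinuous, the Auslander--Yorke dichotomy yields that $f|_{H(x)}$ is sensitive; let $c>0$ be a sensitivity constant. Fix $\varepsilon>0$ with $4\varepsilon<c/2$, let $\delta\in(0,\varepsilon)$ be given by positive shadowability of $x$ at scale $\varepsilon$, and, by uniform continuity of $f$, pick $\delta_1\in(0,\delta)$ such that $d(p,q)<\delta_1$ implies $d(f(p),f(q))<\delta$. Applying sensitivity of $f|_{H(x)}$ to the nonempty open subset $B_{\delta_1}(x)\cap H(x)$, I obtain $y_1,y_2\in B_{\delta_1}(x)\cap H(x)$ and $m\ge 1$ with $d(f^m(y_1),f^m(y_2))>c$; here $m\ge 1$ because $d(y_1,y_2)<2\delta_1<c$. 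Since $H(x)$ is $f$-invariant, $f^m(y_j)\in H(x)$, so by chain transitivity of the class $H(x)$ there exist $\delta$-pseudo-orbits $R_j=(f^m(y_j),r^j_1,\dots,r^j_{l_j-1},x)$ from $f^m(y_j)$ to $x$.

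For $j=1,2$, set
\[
X_j:=\bigl(x,\,f(y_j),\,f^2(y_j),\,\dots,\,f^m(y_j),\,r^j_1,\,\dots,\,r^j_{l_j-1}\bigr),
\]
a $\delta$-pseudo-orbit of length $n_j:=m+l_j$ connecting $x$ to $x$: the opening jump $d(f(x),f(y_j))<\delta$ uses the choice of $\delta_1$; the intermediate jumps along the honest orbit of $y_j$ are exact; and the remaining jumps (including the closing jump $d(f(r^j_{l_j-1}),x)<\delta$) are built into $R_j$. To equalize lengths, replace $X_1$ by its $n_2$-fold self-concatenation $X_1^{n_2}$ and $X_2$ by $X_2^{n_1}$; these are $\delta$-pseudo-orbits of common length $n:=n_1n_2$ connecting $x$ to $x$, since each $X_j$ closes onto $x$. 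At coordinate $m$ (which lies in the first copy of each concatenation, as $m<n_1$ and $m<n_2$) the two sequences equal $f^m(y_1)$ and $f^m(y_2)$, hence separate by more than $c>4\varepsilon$. Lemma \ref{lemma} now produces a compact invariant set on which a power of $f$ factors over the full $2$-shift, i.e., a semi-horseshoe.

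The two technical ingredients are (i) the Auslander--Yorke dichotomy, which is classical, and (ii) the equalization of lengths, handled transparently via concatenation. The hypothesis that $x$ is non-periodic, included in the statement, is in fact automatic: a finite minimal set is equicontinuous, so ``$H(x)$ not equicontinuous'' already forces $H(x)$ to be infinite and $x$ to be non-periodic.
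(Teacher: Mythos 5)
Your proof is correct and follows essentially the same route as the paper's: apply the Auslander--Yorke dichotomy to get sensitivity of $f|_{H(x)}$, use it near $x$ to produce two orbit segments that diverge by more than the sensitivity constant, close each into a $\delta$-pseudo-orbit from $x$ to $x$, equalize lengths by self-concatenation, and invoke Lemma~\ref{lemma} with $k=2$. The only difference is how you close the loop: the paper appeals to minimality again, choosing return times $n_y,n_z>n$ with $f^{n_y}(y),f^{n_z}(z)$ back in the small neighbourhood $U\ni x$, whereas you instead run $\delta$-pseudo-orbits from $f^m(y_j)$ back to $x$ using chain transitivity of the class $H(x)$. Both are valid; the chain-transitivity closing is arguably more in the spirit of the rest of the paper (it is exactly what Theorem~\ref{nonminimal} does) and avoids a second invocation of minimality. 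Your closing remark that non-periodicity of $x$ is already forced by non-equicontinuity of the minimal set $H(x)$ is also correct.
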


    \begin{proof}

        To begin with,  since $H(x)$ is minimal and not equicontinuous. Recall that for minimal systems equicontinuity in the two-sided sense of Section~\ref{basics} is equivalent to forward equicontinuity, because a forward equicontinuous minimal homeomorphism is conjugate to an isometry, see \cite{Kurka}. It follows from \cite{AY} that the map $f|_{H(x)}$ is sensitive to initial conditions.     So, there is  a sensitivity constant $C>0$ such that for any non-empty $U\subset H(x)$ open in $H(x)$, there is $n\in \mathbb{N}$ such that $\diam(f^n(U))>C$. Take any $0<\eps<\frac{C}{4}$.       Since $x$ is not periodic and $H(x)$ is minimal, $H(x)$ has no isolated points. 
         Let $0<\delta<\eps$ be provided by the positive shadowability of $x$ with accuracy $\eps$, and let $0<\eta<\delta$ be such that $d(y,z)\leq \eta$ implies $d(f(y),f(z))\leq \delta$.
       
        Let $x\in U\subset H(x)$ be any open set with diameter smaller than $\eta$.        
         Then, there are $y,z\in U$ and $n\in \mathbb{N}$ such that $d(f^n(y),f^n(z))>C$. 
Since $H(x)$ is minimal, there are $n_y,n_z>n$ such that
$f^{n_z}(z),f^{n_y}(y)\in U$.  We can therefore form the finite
$\delta$-pseudo-orbits
\[
C_1=(x,f(y),\ldots,f^{n_y-1}(y),x)
\quad\text{and}\quad
C_2=(x,f(z),\ldots,f^{n_z-1}(z),x).
\]
    Observe that $C_1$ is a $\delta$-pseudo-orbit connecting $x$ to $x$, because by definition of $U$ we have $d(f(x),f(y))\leq\delta$ and $d(f^{n_y}(y),x)<\delta$. The same argument applies to $C_2$.
By denoting $m=n_yn_z$ and by defining 
$$X_1=C_1C_1\cdots C_1 \textrm{ and } X_2=C_2C_2\cdots C_2, $$

we obtain $\delta$-pseudo-orbits $X_1$ and $X_2$ with $m$ elements that satisfy the assumptions of Lemma \ref{lemma}. Therefore, $f$ admits a semi-horseshoe. 
\end{proof}

\subsection{Measure approximation and entropy flexibility}\label{sec:mainthms}

This subsection proves the measure-approximation results.
Theorem~\ref{mainB} gives qualitative approximation by ergodic measures
supported on semi-horseshoes, while Theorem~\ref{mainC} gives
quantitative entropy targeting and, under entropy expansivity, exact
entropy realization.  Together they form a local analogue of the
measure-approximation and entropy-flexibility phenomena usually obtained
from global shadowing. Since the arguments are long, we
split them into several steps, beginning with a common empirical-measure
estimate.

\begin{lemma}\label{lem:empirical}
Let $\mu_1,\ldots,\mu_k$ be ergodic measures. For every $\eps,R>0$
there is $N>0$ with the following property.  Suppose $n>N$ and
$X=(x_j)_{j\ge0}$ and $Y=(y_j)_{j\ge0}$ are positive
$\eps$-pseudo-orbits satisfying:
\begin{enumerate}
\item\label{lem:empirical:1}  $d(x_j,y_j)\leq\eps$ for every $j\ge0$;

\item\label{lem:empirical:2} for every $i=1,\ldots,k$, there is a
sequence $(p_i^m)_{m\ge0}$ such that, for every $n\ge N$,
\[
d^*(\SE_n(p_i^m),\mu_i)\le\eps;
\]
\item\label{lem:empirical:3}  $Y$ is the concatenation
\[
Y=X_1^1P_1^1\cdots X_k^1P_k^1
  X_1^2P_1^2\cdots X_k^2P_k^2\cdots,
\]
where $P_i^m=(f^j(p_i^m))_{0\le j\le n-1}$ and
$\#X_i^m\le R$ for every $m\ge0$ and $i=1,\ldots,k$;
\item\label{lem:empirical:4} $s_0=0$ and
$s_m=s_{m-1}+kn+\sum_{l=1}^k\#X_l^m.$
\end{enumerate}
Then, for every $m>0$,
\[
d^*\left(\frac1k\sum_{i=1}^k\mu_i,
         \frac1{s_m}\sum_{j=0}^{s_m-1}\delta_{x_j}\right)\le3\eps.
\]
\end{lemma}

\begin{proof}
Fix $\eps,R>0$ and choose $N>0$ such that $\frac{3R}{n}\le\eps$ whenever
$n\ge N$.

For the $m$-th block
\[
Y_m=X_1^mP_1^m\cdots X_k^mP_k^m,
\]
condition~\ref{lem:empirical:3} gives
$kn\le\#Y_m\le k(n+R)$.

By assumption \ref{enum:measure-approx-2} and the item \ref{enum:measure-approx-3} of  Lemma \ref{lem:measure-approx}, we obtain:

$$d^*\left(\frac{1}{k} \sum_{i=1}^k \mu_i, \frac{1}{mk}\sum_{l=1}^m\sum_{i=1}^k\SE_{n}(p^l_i)\right)\leq\eps$$
On the other hand,  for every $m>0$  item \ref{enum:measure-approx-1} of Lemma  \ref{lem:measure-approx} implies:

\begin{eqnarray*}d^*\left(\frac{1}{mk}\sum_{l=1}^m\sum_{i=1}^k\SE_{n}(p^l_i),\frac{1}{s_m}
\sum_{j=0}^{s_m-1}\delta_{y_j}\right)\leq
 \frac{s_m+kmn}{s_mmnk}mkR + \frac{mkR}{s_mmnk}kmn
 \leq \\ \leq \frac{R}{n}+\frac{kmR}{s_m} +\frac{kmR}{s_m}\leq \frac{3R}{n}\leq  \eps
\end{eqnarray*}
Finally, by item \ref{enum:measure-approx-2} of Lemma \ref{lem:measure-approx} and assumption \ref{lem:empirical:1}, it holds
  \begin{eqnarray*}
     d^*\left(\frac{1}{s_m}
\sum_{j=0}^{s_m-1}\delta_{y_j},\frac{1}{s_m}
\sum_{j=0}^{s_m-1}\delta_{x_j}\right)\leq \eps.
 \end{eqnarray*}

So, by combining the two previous inequalities we obtain:

$$d^*\left(\frac{1}{k}\sum_{i=1}^k\mu_i,\frac{1}{s_m}
\sum_{j=0}^{s_m-1}\delta_{x_j}\right)\leq3\eps,$$
and  the proof is complete.
    
\end{proof}

\begin{proof}[Proof of Theorem~\ref{mainB}]
Let $x$ be as in the statement and let $\mu$ be an invariant probability
measure with $K=\Supp(\mu)\subset H(x)$.
    Since $H(x)$ is not equicontinuous, by 
    Theorems \ref{nonminimal} and \ref{senshors} we can obtain $C>0$ such that for any $\delta>0$ there are $m>0$, and a pair of finite $\delta$-pseudo-orbits $X=(x_i)_{0\leq i \leq m-1}$ and $Y=(y_i)_{0\leq i\leq m-1}$ connecting $x$ to $x$ so that $d(y_j,x_j)>C$, for some $0<j<m$. Note that in the non-minimal case the loops constructed in the proof of Theorem~\ref{nonminimal} are based at a point $z\in H(x)$ rather than at $x$. However, since $x\sim z$, we attach $\delta$-pseudo-orbits from $x$ to $z$ and from $z$ to $x$ to both loops, and repeat the loops so that the resulting pseudo-orbits have equal length and remain separated at a common index.

By Theorems~\ref{ShadClass} and~\ref{localshad}, after choosing
$0<4\eps<C$ we can take $0<2\delta<\eps$ so that every positive
$\delta$-pseudo-orbit through $H(x)$ is $\eps$-shadowed. Apply
Lemma~\ref{lemma1} to $f|_K$ to obtain ergodic measures
$\mu_1,\ldots,\mu_k$ supported on $K$ such that
\[
d^*\left(\frac{1}{k}\sum_{i=1}^k\mu_i,\mu\right)\leq\eps.
\]
Let $\SU=\{U_1,\ldots,U_r\}$ be an open cover of $K$ by sets of
diameter less than $\delta$.

Since $H(x)$ is chain transitive, for every pair $i\ne j$ there is a
$\delta$-pseudo-orbit $Z_{i,j}$ connecting $U_i$ to $U_j$.  Let $Z_i$
be a $\delta$-pseudo-orbit connecting $U_i$ to $x$,
$i=1,\ldots,r$. Select a generic point $p_i$ of $\mu_i$ for every
$i=1,\ldots,k$, and put
\[
P^n_i=(f^j(p_i))_{0\le j\le n-1}.
\]
 Let $Z_0$ be a $\delta$-pseudo-orbit connecting $x$ and $p_1$. 

 Next, fix $n>0$ and construct the following pair of $\delta$-pseudo-orbits:

$$P_1=XZ_0P^n_1Z_{i_1,j_1}P^n_2Z_{i_2,j_2}\cdots P^n_kZ_{i_k} \textrm{ and } P_2=YZ_0P^n_1Z_{i_1,j_1}P^n_2Z_{i_2,j_2}\cdots P^n_kZ_{i_k},$$
so that:
\begin{itemize}
    \item $f^n(p_l)\in U_{i_l}$ and $p_{l+1}\in U_{j_l}$ for
    $l=1,\ldots,k-1$.
    \item $f^n(p_k)\in U_{i_k}$.
    
\end{itemize}

Note that by finiteness, there is $R>0$ bounding the length of $Z_{i,j}$, $Z_i$, $Z_{i_k}XZ_0$ and $Z_{i_k}YZ_0$, for every $i=1,\ldots,r$. 
If we denote  the lengths of $Z_0$,  $Z_{i_l,j_l}$ and $Z_{i_k}$ by $s^n_0$, $s^n_l$ and $s^n_k$, respectively, we obtain that $P_1$ and $P_2$ have length
$$s_n=m+kn+\sum_{l=0}^ks^n_l\leq m+kn+(k+1)R.$$
Since $p_i$ is a generic point of $\mu_i$, there is $N>0$ such that $d^*(\SE_n(p_i),\mu_i)\leq\eps$, for $n>N$ and $i=1,\ldots,k$. 

Moreover, by construction there are  $x_j\in P_1$ and $y_j\in P_2$ such that  $d(x_j,y_j)>C$.
Hence, by Lemma~\ref{lemma}, there is a compact subset $\hat K\subset M$ such that $f^{s_n}|_{\hat K}$ factors over a full shift of $2$ symbols. Let $$K_\eps=\bigcup_{i=0}^{s_{n}-1}f^i(\hat K).$$
So, $K_{\eps}$ has positive topological entropy and therefore it supports an ergodic measure $\nu_{\eps}$ with positive entropy. Let $p_{\eps}$ be a generic point of $\nu_{\eps}$. Since the orbit of a generic point consists of generic points, we can assume $p_{\eps}\in \hat K$. Since $p_{\eps}$ is generic for $\nu_{\eps}$, for all sufficiently large $r$ $$d^*(\SE_{rs_n}(p_\eps), \nu_{\eps})\leq \eps.$$

We shall show that $d^*(\nu_{\eps},\mu)\leq 6\eps$ and since $\eps$ was chosen arbitrarily it will finish the proof. For this purpose, it is necessary to recall from the proof of Lemma \ref{lemma}, that every sequence $(r_j)\in \Sigma_2$ codes a $\delta$-pseudo-orbit  $ C^\infty(r)=(C(r_j))_{j\in\Z}$. Moreover, for every point $x\in \hat K$, there is $r\in \Sigma_2$ such that $x$ 
   $\eps$-traces the pseudo orbit $\xi$ with $\sigma^t(\xi)
  \in C^\infty(r)$ for some $0\leq t<m$. In particular, this holds for $p_{\eps}$, which $\eps$ traces a $\delta$-pseudo-orbit $\xi=(\xi_t)_{t\in \Z}$, which is a concatenation of $P_1$ and $P_2$. Consequently, $(\xi_j)_{j\geq 0}$ and $(f^j(p_{\eps}))$ are under the hypothesis of Lemma \ref{lem:empirical}. 
  With this in mind, we have to deal with the following inequality. 
       
   \begin{eqnarray*} d^*(\nu_{\eps},\mu)\leq d^*(\nu_{\eps},\SE_{rs_n}(p_{\eps}))+ d^*(\SE_{rs_n}(p_{\eps}),\frac{1}{rs_n}\sum_{t=0}^{rs_n-1}\delta_{\xi_t})+\\
   +d^*(\frac{1}{rs_n}\sum_{t=0}^{rs_n-1}\delta_{\xi_t},\frac{1}{k}\sum_{i=1}^k\mu_i)+ d^*(\frac{1}{k}\sum_{i=1}^k\mu_i,\mu) 
\end{eqnarray*}
Observe that the first and fourth terms on the right-hand side of the preceding inequality are trivially less than or equal to $\eps$. The second term is less than or equal to $\eps$ by (2) of Lemma \ref{lem:measure-approx}. The third term is less than or equal to $3\eps$ as a consequence of Lemma \ref{lem:empirical}. This completes the proof.

\end{proof}

We now prove Theorem~\ref{mainC}. The proof follows the
orbit-concatenation scheme used for Theorem~\ref{mainB}, with the
additional requirement of controlling the lower entropy bound. The
resulting semi-horseshoes need not be contained in $\su(\mu)$, but their
invariant measures remain close to $\mu$ in the weak${}^*$ topology.  We
organize the construction into the following steps:

\begin{itemize}
    \item \textbf{Step 1:} Fix the constants supplied by positive pointwise shadowability and entropy expansiveness, and find ergodic measures approximating $\mu$ and its entropy.
    \item \textbf{Step 2:} For the measures obtained in Step 1, choose
    separated sets whose cardinalities give an entropy lower bound
    greater than $c$.
    \item \textbf{Step 3:} We then use positive pointwise tracing together with the points obtained in Step 2 to construct a closed invariant subset $K$ whose dynamics can be coded by a semi-horseshoe. 

    \item \textbf{Step 4:} Construct an ergodic measure on the semi-horseshoe with entropy greater than $c$, and show that every measure on $K$ having a generic point belongs to a fixed neighborhood of $\mu$.
    
    \item  \textbf{Step 5:} Identify the fibres of the factor map with dynamical balls for $f|_K$ and use entropy expansiveness to obtain the required entropy estimates.
    \item \textbf{Step 6:} Use properties of full shifts to obtain entropy flexibility.
\end{itemize}

We now give the details following the above general steps.

\begin{proof}[Proof of Theorem~\ref{mainC}]

We first write the proof when $h_\mu(f)<\infty$.  If
$h_\mu(f)=\infty$, fix an auxiliary finite number $H>c$.  Parts~(2) of
Lemma~\ref{lemma1} and~(3) of Lemma~\ref{lemma2}, applied with
sufficiently small parameters, replace \eqref{eq:muAn} and
\eqref{entrcompAB} below by the corresponding lower bounds with $H$ in
place of $h_\mu(f)$. All subsequent entropy estimates use only these
lower bounds. Choosing $\kappa<\frac{H-c}{8}$ therefore gives the same
construction and conclusions.  We henceforth assume
$h_\mu(f)<\infty$.

{\bf Step 1:}  Fix $x\in Sh^+(f)\cap CR(f)$ and an invariant measure
$\mu$ supported on $H(x)$. Fix $c\in[0,h_\mu(f))$, and let
$\eps_0>0$ be the approximation accuracy required in the statement.
If $f$ is entropy expansive, let $e>0$ be an entropy-expansiveness
constant; otherwise set $e=1$.
Choose
\[
0<\kappa\leq
\min\left\{\eps_0,\frac{h_\mu(f)-c}{8}\right\}.
\]
The bound $\kappa\leq\eps_0$ ensures that the final estimate
$d^*(\mu_c,\mu)<\kappa$ gives the required accuracy.

Apply Lemma~\ref{lemma1} to the restricted system
$f|_{\su(\mu)}$. There are ergodic measures
$\mu_1,\ldots,\mu_k$ supported on $\su(\mu)\subset H(x)$ such that
$$\left| \frac{1}{k}\sum_{i=1}^kh_{\mu_i}(f)
-h_{\mu}(f)\right|\leq\frac{\kappa}{16} \quad\textrm{ and }\quad d^*(\nu,\mu)\leq\frac{\kappa}{8},$$
where $\nu=\frac{1}{k}\sum_{i=1}^k\mu_i$.  The entropy function is
affine on $\SM_f(M)$ (see, for example,
\cite[Theorem~8.1]{Walters}), so
$h_{\nu}(f)=\frac{1}{k}\sum_{i=1}^kh_{\mu_i}(f)$.

For each $i$, apply Lemma~\ref{lemma2} with entropy tolerance
$\frac{\kappa}{16}$ and with the weak${}^*$ topology neighborhood
\[
\mathcal V_i=\left\{\zeta\in\SM_f(M):
       d^*(\zeta,\mu_i)<\frac{\kappa}{8}\right\}.
\]
Let $\rho_i>0$ be the resulting separation scale, and put
$\rho=\min_i\rho_i$. Choose
\[
0<\beta<\frac18\min\{\rho,e,\kappa\},
\]
and then choose $0<\delta<\frac{\beta}{2}$ using
Theorem~\ref{localshad} for the compact set $H(x)$, small enough that
every positive $2\delta$-pseudo-orbit through
$B_{2\delta}(H(x))$ is $\beta$-shadowed.
 
\vspace{0.1in}

{\bf Step 2:}
Lemma~\ref{lemma2} gives $n_0>0$ such that, for every $n\geq n_0$ and
every $i$, there is an $n$-$\rho$-separated set
$A^n_i\subset\su(\mu_i)\subset H(x)$ satisfying
\[
\left|\frac{\log(\#A^n_i)}{n}-h_{\mu_i}(f)\right|
  <\frac{\kappa}{16},
\qquad
d^*(\SE_n(y),\mu_i)<\frac{\kappa}{8}
   \quad(y\in A_i^n).
\]
In what follows, we will enlarge $n_0$ without further mention.
Denote $A^n= \prod_{i=1}^{k}\#A_i^n$. Consequently, we obtain 
\begin{equation}\label{eq:muAn}\left|\frac{\log(A^n)}{kn}-h_{\mu}(f)\right|\leq \frac{1}{k}\sum_{i=1}^k\left|\frac{\log(\#A^n_i)}{n}-h_{\mu_i}(f)\right|+|h_{\nu}(f)
-h_{\mu}(f)|<\frac{\kappa}{8}.
\end{equation}

\begin{equation}\label{empirapprox}
y\in A^n_i
\quad\Longrightarrow\quad
d^*(\SE_n(y),\mu_i)<\frac{\kappa}{8}.
\end{equation}

Recall that, by Lemma~\ref{lem:measure-approx}, if $z$ $\beta$-shadows
a pseudo-orbit $z_0,\ldots,z_{m-1}$, then
\begin{equation}\label{approx-shadow}
d^*\left(\SE_m(z),\frac{1}{m}\sum_{i=0}^{m-1}\delta_{z_i}\right)
 <\beta<\frac{\kappa}{8}.
\end{equation}

\vspace{0.1in}

{\bf Step 3:}
Fix an open cover
$\SU=\{U_1,\ldots,U_p\}$ of $\su(\mu)$ by sets of diameter less than
$\delta$.  Since the measures $\mu_i$ were obtained from the restricted
system on $\su(\mu)$, the same cover contains every $\su(\mu_i)$.
Chain transitivity of $H(x)$ gives, for every pair
$U_l,U_{l'}\in\SU$, a $\delta$-pseudo-orbit of uniformly bounded length
connecting $U_l$ to $U_{l'}$; let $R$ be a common upper bound for these
lengths.

For $1\le i\le k$, $n\ge n_0$, and $U_l,U_{l'}\in\SU$, put
\[
 A^n_{i,l,l'}=A^n_i\cap U_l\cap f^{-n}(U_{l'}).
\]
The pigeonhole principle gives a pair $l,l'$ such that
$$
 \#A^n_i\geq \#(A^n_{i,l,l'})\geq \frac{\#A^n_i}{p^2}. 
$$ 
Choose one such pair and denote
$B^n_i=A^n_{i,l,l'}$, $U_l=V_i$, and $U_{l'}=W_i$.
After enlarging $n_0$ if necessary, for every $i$ we have

$$
0\leq \frac{\log(\#A^n_i)}{n}-\frac{\log(\#B^n_i)}{n}\leq 
\frac{2\log(p)}{n}
\leq \frac{\kappa}{8},
$$

and as a consequence of \eqref{eq:muAn} we get:
\begin{equation}\label{entrcompAB}
\left|\frac{1}{kn}\sum_{i=1}^k\log(\# B^n_i)-h_{\mu}(f)\right|<\frac{\kappa}{4}.
\end{equation}

Fix $n\geq n_0$.  For $i=1,\ldots,k$, put $s_i=\#B^n_i$, enumerate
$B_i^n=\{y_0^i,\ldots,y_{s_i-1}^i\}$, and define the following
$\delta$-pseudo-orbits:
\begin{itemize}
    \item For each $i=1,\ldots,k$ and 
     $j=0,\ldots,s_i-1$, let
     $C^{j}_i=(y_j^i,\ldots,f^{n-1}(y_j^i))$.
    \item For $i=1,\ldots,k-1$, let $C_i$ be a
    $\delta$-pseudo-orbit of length $m_i\leq R$ connecting $W_i$ to
    $V_{i+1}$.
    \item Let $C_k$ be a $\delta$-pseudo-orbit of length $m_k\leq R$
    connecting $W_k$ to $V_1$.
\end{itemize}
By the  construction, for any choices of $0\leq l_i\leq s_i-1$ the concatenation
\[
C(l_1,\ldots,l_k)
   =C^{l_1}_1C_1C^{l_2}_2C_2\cdots C^{l_k}_kC_k
\]
is a $\delta$-pseudo-orbit connecting $y_{l_1}$ to $y_{l_1}$. 

Furthermore, every $C(l_1,\ldots,l_k)$ has the following properties:
\begin{itemize}
\item For any choice of $l_1,\ldots,l_k$,
$C(l_1,\ldots,l_k)$ is a $\delta$-pseudo-orbit
 completely contained in $H(x)\subset Sh^+(f)$.
\item For any choice of $l_1,\ldots,l_k$,
$C(l_1,\ldots,l_k)$ has length
$m=nk+\sum_{i=1}^k m_i$.

\item If $l_i\ne l_i'$ for some $i$, the $n$-$\rho$-separation of
$B_i^n$ gives an index $0\le r<m$ such that
\[
d\bigl(C(l_1,\ldots,l_k)_r,
       C(l'_1,\ldots,l'_k)_r\bigr)>\rho.
\]
Consequently there are $s=s_1\cdots s_k$ distinguishable
concatenations, and points $\beta$-shadowing distinct concatenations
are different.
\end{itemize}

For simplicity, enumerate the preceding pseudo-orbits
$C(l_1,\ldots,l_k)$ as $C(t)$, where $0\le t<s$.
Strictly speaking, the pseudo-orbits $C(t)$ do not share a common base
point, as required in Lemma~\ref{lemma}, because their initial points
vary in $V_1$.  Since $\diam V_1<\delta$, however, every bi-infinite
concatenation of the blocks $C(t)$ is a $2\delta$-pseudo-orbit
contained in $H(x)$.  The choice of $\delta$ in Step~1 therefore allows
the proof of Lemma~\ref{lemma} to be applied without change. Hence
there is a compact and invariant subset  $ K\subset M$ and $f^m$ invariant set $\hat K\subset K$ such that $f^m|_{\hat K}$ factors over the full shift of $s$ symbols.
  
  Recall from Lemma \ref{lemma}, that every sequence $(r_j)\in \Sigma_s$ codes a $2\delta$-pseudo-orbit  $ C^\infty(r)=(C(r_j))_{j\in\Z}$. Moreover, for every point $x\in K$, there is $r\in \Sigma_s$ such that $x$
   $\beta$-traces the pseudo orbit $\xi$ with $\sigma^t(\xi)
  \in C^\infty(r)$ for some $0\leq t<m$. Observe that we cannot require uniqueness of shadowing points, since we are not assuming the expansiveness of $f$.  On the other hand, by the choice of $\beta$, if $x$ and $y$ shadow distinct pseudo-orbits $C(r)$, then $x$ and $y$ are $m$-$\frac{\rho}{2}$-separated. 
  Thus, we have  $$\log(S(m,\frac{\rho}{2},K))\geq \log(s)=\log(s_1\cdots s_k)=\sum_{i=1}^k\log(\#B^n_i).$$
    Consequently, for every $j>0$, it holds $$\frac{1}{jm}\log(S( jm,\frac{\rho}{2},K))\geq \frac{j}{j(n+R)
     k}\sum_{i=1}^k\log(\#B^n_i).$$
Therefore $$h(f|_K)\geq \frac{1}{(n+R)k}\sum_{i=1}^k\log(\#B^n_i).$$
We may assume that $n_0$ is large enough that
$$ \left|\frac{1}{(n+R)k}\sum_{i=1}^k\log(\#B^n_i)- \frac{1}{nk}\sum_{i=1}^k\log(\#B^n_i)    \right|\leq\frac{\kappa}{8}.$$
Combining the preceding lower bound with \eqref{entrcompAB}, we obtain
\[
 h(f|_K)>h_\mu(f)-\frac{3\kappa}{8}>c.
\]
Only this lower estimate is needed here; no upper estimate for
$h(f|_K)$ is asserted.

\vspace{0.1in}
{\bf Step 4:}

By the variational principle, there is an ergodic measure $\mu_c$ supported on $K$ and satisfying $h_{\mu_c}(f)>c$. We claim that for any ergodic measure $\lambda\in \SM_f(K)$, we have $d^*(\lambda,\mu)\leq\kappa$.  Fix any generic point $x\in K$ of $\lambda$. By definition,  \begin{equation}\label{genericapprox}d^*(\SE_a(x),\lambda)\to 0 ,\end{equation}
when $a\to \infty$. On the other hand $f^t(x)\in \hat{K}$ for some $0\leq t<m$, hence there is a sequence $r\in\Sigma_s$ such that $\pi(f^t(x))=r$. In particular, $f^t(x)$ traces the $\delta$-pseudo-orbit $C^\infty(r)$. 
We then obtain by \eqref{approx-shadow} and Lemma~\ref{lem:measure-approx} that for any $a>t+m$, if $b$ is the largest integer such that $bm\leq a$, then for empirical measure $$
\lambda_{C^\infty(r)}^b=\frac{1}{bm}\sum_{p=0}^{b-1}\sum_{j=0}^{m-1}\delta_{C(r_p)_j}$$
we have:
\begin{equation}\label{oapprox}d^*\left(\SE_a(x),\lambda_{C^\infty(r)}^b\right)\leq \frac{3\kappa}{8}.\end{equation}
This implies, by \eqref{empirapprox}, Lemma~\ref{lem:empirical}, and the
choice of $\nu=\frac1k\sum_{i=1}^k\mu_i$, that for sufficiently large
$b$ (equivalently, sufficiently large $a$)
$$
d^*(\lambda,\mu)\leq d^*(\lambda,\SE_a(x))+d^*\left(\SE_a(x),\lambda_{C^\infty(r)}^b\right)+d^*\left(\lambda_{C^\infty(r)}^b,\frac{1}{b}\sum_{p=0}^{b-1}\nu\right)+d^*(\nu,\mu)<\kappa.
$$

Summing up, $d^*(\mu_c,\mu)<\kappa$ and $h_{\mu_c}(f)>c$ as claimed.
\vspace{0.1in}

\vspace{0.1in}
{\bf Step 5:} Suppose now that $f$ is entropy expansive with constant
$e$. Let $\pi\colon\hat K\to\Sigma_s$ be the factor map from
$f^m|_{\hat K}$ onto $\sigma$.  We have
$h(f^m|_{\hat K})\geq h(\sigma)$, while
\cite[Theorem~17]{Bo} gives
$$h(f^m|_{\hat K})\leq h(\sigma) + \sup_{r\in \Sigma_s}\{h(f^m, \pi^{-1}(r))\}.$$

Every point of $\pi^{-1}(r)$ $\beta$-shadows the pseudo-orbit coded by
$r$. Hence, for $x,y\in\pi^{-1}(r)$,
\[
d(f^j(x),f^j(y))\leq2\beta<e
\quad\text{for every }j\in\Z.
\]
Thus $\pi^{-1}(r)\subset B^\infty_{2\beta}(x)$ for every
$x\in\pi^{-1}(r)$, and entropy expansiveness gives
$h(f^m,\pi^{-1}(r))=0$. Consequently,
$h(f^m|_{\hat K})=h(\sigma)$.
 \vspace{0.1in}   

{\bf Step 6:}
Since $K=\bigcup_{j=0}^{m-1}f^j(\hat K)$, Step~5 and the entropy
estimate in Step~3 give
\[
h(\sigma)=h(f^m|_{\hat K})=m h(f|_K)>mc.
\]

By \cite[Theorem~2.5]{Gr}, there is a compact invariant set
$\Lambda\subset\Sigma_s$ such that $\sigma|_\Lambda$ is minimal and
uniquely ergodic and
$h(\sigma|_\Lambda)=mc$. Put
$\hat K_c=\pi^{-1}(\Lambda)$, and let $\nu_c$ be the unique invariant
measure supported on $\Lambda$.
Observe that $\hat{K}_c$ is compact and $f^m$-invariant and factors over $\sigma|_{\Lambda}$. Moreover, if $\eta$ is an ergodic measure for $f^m|_{\hat{K}_c}$, we have that $\eta$ factors over the unique ergodic measure of $\Lambda$, that is $\pi_*\eta=\nu_c$. In particular, $h_{\eta}(f^m)\geq h_{\nu_c}(\sigma|_{\Lambda})=mc$. Since $f$ is entropy-expansive, arguing with Bowen's inequality exactly as in Step 5 we get $h(f^m|_{\hat{K}_c})\leq h(\sigma|_{\Lambda})=mc$, and therefore $$h(f^m|_{\hat{K}_c})=h_{\eta}(f^m)=mc.$$
Denote $K_c=
\bigcup_{j=0}^{m-1}f^j(\hat{K}_c)$ and let $\mu_c$ be any ergodic measure supported on $K_c$. By Jacobs ergodic decomposition theorem for entropy (e.g. see \cite[Theorem~8.4]{Walters}) we have that
$$
h_{\mu_c}(f^m)=\int_{\SM_{f^m}^e(K_c)} h_\eta(f^m)\;\text{d}\tau(\eta).
$$
But every $\eta\in \SM_{f^m}^e(K_c)$ satisfies $\eta(f^i(\hat{K}_c))=1$ for some $i$, and hence it is isomorphic (by $f^i$) to some $\eta\in \SM_{f^m}^e(\hat{K}_c)$. But we already know that in such a case $h_\eta(f^m)=mc$, so
$$
h_{\mu_c}(f^m)=\int_{\SM_{f^m}^e(K_c)} mc\; \text{d}\tau(\eta)=mc
$$
and therefore $h_{\mu_c}(f)=c$.  Since $\su(\mu_c)\subset K$, the
estimate proved above gives $d^*(\mu_c,\mu)<\kappa$, completing the proof.

\end{proof}

Theorem~\ref{mainC} also shows that the local theory contains the
classical global setting as a special case.  Its first immediate
consequence is entropy flexibility for entropy-expansive homeomorphisms
with the shadowing property.

\begin{corollary}
An entropy-expansive homeomorphism with shadowing has entropy flexibility:
for every $c\in[0,h(f))$, there are an ergodic
measure $\mu_c$ and a compact invariant set $K_c\subset M$ such that
\[
        h(f|_{K_c})=h_{\mu_c}(f)=c.
\]
In particular, every countably expansive homeomorphism with the shadowing
property has entropy flexibility.
\end{corollary}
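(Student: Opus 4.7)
The plan is to reduce directly to Theorem~\ref{measureaprox}. Since $f$ has the shadowing property, every point of $M$ is positively shadowable, so $Sh^+(f) = M$; the positive-shadowability hypothesis of that theorem is automatic everywhere. Fix $c \in [0, h(f))$ and, by the variational principle, pick an ergodic measure $\mu \in \SM_f^e(M)$ with $h_\mu(f) > c$.

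The only preparatory step is to exhibit an $x \in Sh^+(f)$ with $\su(\mu) \subset H(x)$. I would proceed as follows. By the Birkhoff ergodic theorem, $\mu$-a.e. point of $\su(\mu)$ is generic for $\mu$ and hence has forward orbit dense in $\su(\mu)$; pick one such $x$, which is in particular recurrent and so lies in $CR(f)$. For arbitrary $y, z \in \su(\mu)$ and $\delta > 0$, the density of $O^+(x)$ in $\su(\mu)$ together with uniform continuity of $f$ produces a finite $\delta$-pseudo-orbit of the form $y, f^{n+1}(x), f^{n+2}(x), \ldots, f^{k}(x), z$: choose $n$ so small that $d(f(y), f^{n+1}(x)) < \delta$, and then $k > n$ with $d(f^{k+1}(x), z) < \delta$. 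Exchanging the roles of $y$ and $z$ gives the reverse chain, so $y \sim z$ and therefore $\su(\mu) \subset H(x)$.

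All the hypotheses of Theorem~\ref{measureaprox} now hold — $x \in Sh^+(f)$, $\su(\mu) \subset H(x)$, $c \in [0, h_\mu(f))$, and $f$ is entropy-expansive — so the theorem supplies an ergodic measure $\mu_c$ with
\[
h_{\mu_c}(f) = h(f|_{\su(\mu_c)}) = c,
\]
and setting $K_c := \su(\mu_c)$ proves the main assertion. For the \emph{in particular} clause I would invoke the standard fact that countable-expansiveness implies entropy-expansiveness: if some uniform $e > 0$ makes every $B^\infty_e(x)$ at most countable, then $h(f, B^\infty_e(x)) = 0$, since a continuous self-map on a countable compact metric space is scattered and has zero topological entropy. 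The first part of the corollary then applies. The only mildly delicate point in the whole argument is the verification that $\su(\mu) \subset H(x)$; after that, the result is a one-line appeal to Theorem~\ref{measureaprox}.
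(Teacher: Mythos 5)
Your proof is correct and establishes the corollary, but it takes a genuinely different (and arguably cleaner) route than the paper. The paper first reduces $h(f)$ to the chain-recurrent set via $h(f)=h(f|_{\Omega(f)})\le h(f|_{CR(f)})\le h(f)$, then writes $h(f)=\sup_{x\in CR(f)}h(f|_{H(x)})$, selects a chain class $H(x_n)$ with $h(f|_{H(x_n)})>c$, and finally picks an ergodic measure supported in that class. Your proof reverses the order: pick an ergodic $\mu$ with $h_\mu(f)>c$ directly by the variational principle, then show that for a $\mu$-generic point $x$ one has $\su(\mu)\subset H(x)$ by constructing explicit $\delta$-pseudo-orbits that ride along the dense forward orbit of $x$. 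This avoids the somewhat implicit identity $h(f)=\sup_{x}h(f|_{H(x)})$, which in fact requires exactly the observation you make explicit (that an ergodic measure on $CR(f)$ is supported on a single chain class). Your route also has the merit of actually addressing the ``in particular'' clause — the paper's proof silently skips the fact that countable-expansiveness implies entropy-expansiveness — though your one-line justification (``countable compact sets have zero entropy'') deserves a citation or a slightly more careful argument, since $B^{\infty}_e(x)$ need not be $f$-invariant, so this is Bowen entropy of a noninvariant countable compact set rather than topological entropy of a subsystem. Finally, note that the hypothesis of Theorem~\ref{measureaprox} as used here tacitly needs $x\in CR(f)$ for $H(x)$ to make sense; your choice of a $\mu$-generic $x$ handles this automatically (generic $\Rightarrow$ recurrent $\Rightarrow$ chain-recurrent), which is one more small advantage of your approach.
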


\begin{proof}
Let $f$ be an entropy-expansive homeomorphism with the shadowing property,
and fix $c\in[0,h(f))$.  By the variational principle and the ergodic
decomposition theorem, there is an ergodic invariant measure $\mu$ such
that $h_\mu(f)>c$.  The support of an ergodic measure is chain transitive,
so, for every $x\in\operatorname{supp}(\mu)$,
\[
        \operatorname{supp}(\mu)\subset H(x).
\]
Since $f$ has the shadowing property, every point belongs to $\Sh^+(f)$.
Theorem~\ref{mainC}, applied to $x$ and $\mu$, therefore gives an
ergodic measure $\mu_c$ such that
\[
        h_{\mu_c}(f)
        =h\bigl(f|_{\operatorname{supp}(\mu_c)}\bigr)=c.
\]
Taking $K_c=\operatorname{supp}(\mu_c)$ proves the first assertion.

For the final assertion, suppose that \(f\) is countably expansive and has the
shadowing property. Set
$Y=CR(f)$ and
$g=f|_Y$.
Since \(f\) has the shadowing property, \(CR(f)=\Omega(f)\), and hence
$
h(g)=h(f|_{\Omega(f)})=h(f).
$
Moreover, \(g\) is countably expansive and has the shadowing property. Since the
chain recurrent set has the restriction property,
$\Omega(g)=CR(g)=CR(f|_{CR(f)})=CR(f)=Y.$
It follows from \cite[Theorem~2.5]{ACCV} that \(g\) is entropy expansive.
Therefore, the first part of the corollary, applied to \(g\), gives, for every
\(c\in[0,h(g))=[0,h(f))\), an ergodic \(g\)-invariant measure \(\mu_c\) and a
compact \(g\)-invariant set \(K_c\subset Y\) such that
$
h(g|_{K_c})=h_{\mu_c}(g)=c.
$
Since \(g=f|_Y\), the measure \(\mu_c\) and the set \(K_c\) are also
\(f\)-invariant and
$
h(f|_{K_c})=h_{\mu_c}(f)=c.
$
Thus \(f\) has entropy flexibility.
\end{proof}

\subsection{Expansivity and further entropy consequences}\label{pointsentr}

This subsection applies the coding mechanism rather than introducing a new
source of complexity. Pointwise or measurable expansivity supplies the
separation and non-equicontinuity; local tracing then converts it into a
semi-horseshoe. Theorems~\ref{entrcexp} and~\ref{entrSM} are the resulting
pointwise and measure-theoretic statements.
They also answer the question proposed in \cite{AR} while strengthening
its hypotheses.  In \cite{AR}, positive entropy was obtained by combining
classical shadowable points with uniformly expansive or uniformly
$N$-expansive points.  In our approach we will use weaker version of pointwise countable
expansivity. Let us first recall this concept.

\begin{definition}
Let $f\colon M\to M$ be a homeomorphism. We say that $x\in M$ is
\emph{countably expansive} if there is $e>0$ such that
$B^{\infty}_e(x)$ is countable.
\end{definition}
%

\begin{theorem}\label{expsens}
Let $f\colon M\to M$ be a homeomorphism and let $x\in M$ be
non-periodic, recurrent, and countably expansive. Then there is $C>0$
such that every nonempty relatively open set
$U\subset\overline{O(x)}$ satisfies
\[
\diam(f^n(U))>C
\]
for some $n\in\mathbb Z$.
\end{theorem}
\begin{proof}
Assume $x$ satisfies the hypotheses. Consequently,
every nonempty open subset of $\overline{O(x)}$ is uncountable. Fix
$z\in\overline{O(x)}$ and let $U$ be a neighborhood of $z$ in
$\overline{O(x)}$. Let $e>0$ be such that $B^{\infty}_e(x)$ is at most
countable. Since $f$ is a homeomorphism,
\[
f^n(B^{\infty}_e(x))=B^{\infty}_e(f^n(x))
\quad\text{for every }n\in\mathbb Z,
\]
so $B^{\infty}_e(f^n(x))$ is at most countable. Choose
$n\in\mathbb Z$ such that $f^n(x)\in U$ and put
$U'=U\cap B_e(f^n(x))$. The set $U'$ is a nonempty relatively open
subset of $\overline{O(x)}$, hence is uncountable, whereas
$B^{\infty}_e(f^n(x))$ is countable. Thus there are $y\in U'$ and
$m\in\mathbb Z$ with
$d(f^m(y),f^{m+n}(x))>e$. Since both $y$ and $f^n(x)$ belong to
$U'\subset U$, we obtain $\diam(f^m(U))>e$. The assertion follows with
$C=e$.
\end{proof}

The preceding results give the following pointwise criterion, related to
the question raised in \cite{AR}.

\begin{theorem}\label{entrcexp}
Let $f\colon M\to M$ be a homeomorphism. If $f$ has a non-periodic,
positively shadowable, chain-recurrent, and countably expansive point,
then $f$ has a semi-horseshoe. In particular, $f$ has positive
topological entropy.
 \end{theorem}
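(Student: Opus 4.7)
The plan is to dispatch the statement by a dichotomy on whether the chain-recurrence class $H(x)$ is minimal, since the two cases are already handled by Theorems \ref{nonminimal} and \ref{senshors}. By hypothesis $x \in Sh^+(f) \cap CR(f)$, so if $H(x)$ is not a minimal set, Theorem \ref{nonminimal} applies verbatim and yields a semi-horseshoe for $f$.

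The substantive case is when $H(x)$ is minimal. In that event $x$ lies in its own minimal set, hence $x$ is recurrent and $\overline{O(x)} = H(x)$. The idea is to combine the countable-expansiveness of $x$ with Theorem \ref{expsens} to obtain a sensitivity-type condition on $H(x)$: there exists $C>0$ such that for every nonempty relatively open $U \subset H(x) = \overline{O(x)}$ some iterate satisfies $\diam(f^n(U)) > C$. A standard uniform-continuity argument on the compact set $H(x)$ then shows this precludes equicontinuity of $f|_{H(x)}$: if it were equicontinuous, then for $\epsilon < C/2$ one could find $\delta > 0$ such that every relatively open ball of radius $\delta$ in $H(x)$ has all iterates of diameter at most $2\epsilon < C$, contradicting Theorem \ref{expsens}. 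Thus $x$ is non-periodic, positively shadowable, chain-recurrent, and $H(x)$ is minimal but not equicontinuous, so Theorem \ref{senshors} provides the desired semi-horseshoe. Positive topological entropy is then immediate, since any semi-horseshoe carries positive entropy (see Section \ref{sec:prel}).

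The delicate step, and the reason Theorem \ref{expsens} was proved earlier, is precisely the passage from the pointwise hypothesis that $B^{\infty}_e(x)$ is countable to a uniform sensitivity statement on the whole orbit closure $\overline{O(x)}$: pointwise countable-expansiveness by itself would not directly contradict equicontinuity of $f|_{H(x)}$. Once that passage is in hand, the rest is a clean case split invoking the two horseshoe-production theorems of Section \ref{SectionLocEntr}.
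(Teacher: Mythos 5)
Your proposal follows the same two-case dichotomy as the paper's proof: in the non-minimal case invoke Theorem~\ref{nonminimal}, and in the minimal case reduce to showing $H(x)$ is not equicontinuous so that Theorem~\ref{senshors} applies, using Theorem~\ref{expsens} to produce the needed contradiction. There is, however, a small gap in the step you label a ``standard uniform-continuity argument.'' The sensitivity conclusion of Theorem~\ref{expsens} produces some $n\in\Z$, possibly negative, while the paper's definition of equicontinuity only controls iterates $n\geq 0$; your argument tacitly assumes two-sided equicontinuity when it asserts that every small ball has all iterates of small diameter. This is bridgeable: for a homeomorphism of a compact metric space, forward equicontinuity does imply two-sided equicontinuity. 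The paper sidesteps the issue by citing \cite[Theorem 2.42]{Kurka}, which upgrades equicontinuity on a compact minimal system to conjugacy with an isometry, after which the contradiction with Theorem~\ref{expsens} is immediate for all $n\in\Z$. If you make that passage explicit (either by citing Kurka as the paper does, or by arguing via the compact group closure of $\{f^n\}$), your proof is complete and coincides with the paper's.
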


\begin{proof}
    
    Suppose $x$ satisfies all the required assumptions. Since $x$ is non-periodic, $H(x)$ is not a periodic orbit. If $H(x)$ is not minimal, the existence of a semi-horseshoe is granted by 
    Theorem \ref{nonminimal}.

    If $H(x)$ is minimal, to apply Theorem~\ref{senshors} we need only show
that $H(x)$ is not equicontinuous. If it were equicontinuous, some
nonempty relatively open set $U\subset H(x)$ would satisfy
$\diam(f^n(U))<C$ for all $n\in\mathbb Z$, where $C$ is the constant from
Theorem~\ref{expsens}. This contradicts that theorem.
\end{proof}

Within the setting of homeomorphisms, this criterion uses the following
weaker pointwise assumptions than the corresponding results of
\cite{AR}:
  \begin{itemize}
     \item a chain-recurrent point in place of a non-wandering point;
     \item positive shadowability in place of classical shadowability,
           while still covering every point of $\Sh(f)$;
     \item countable expansivity instead of either uniform expansivity
           or the uniformly \(N\)-expansive conditions.
  \end{itemize}
Theorem~\ref{entrcexp} is formulated for homeomorphisms, whereas
\cite{AR} also treats continuous maps. Corresponding one-sided versions
for noninvertible maps follow from the same positive-time tracing
argument, as discussed below.

Next, we study the consequences of combining measurable expansivity with
measure shadowing. This combination also yields entropy consequences.

\begin{definition}\label{def:expansive_meas}
    A Borel  probability measure $\mu$ is said to be expansive, if there is $e>0$ such that $\mu(B^{\infty}_e(x))=0$, for every $x\in M$.
\end{definition}

\begin{remark}
Every expansive measure is non-atomic, because
$\{x\}\subset B^\infty_e(x)$ for every $x\in M$. Let us stress the fact that the invariance of
measure is not assumed in Definition~\ref{def:expansive_meas}.
\end{remark}

\begin{theorem}\label{entrSM}
     Let $f:M\to M$ be a homeomorphism. If $f$ admits an expansive and positively shadowable measure $\mu$ giving positive measure to a chain-recurrent class, then $f$ has a semi-horseshoe. In particular, $f$ has positive topological entropy.
\end{theorem}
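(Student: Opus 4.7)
The plan is to reduce the hypotheses of Theorem \ref{entrSM} to those of one of the two semi-horseshoe theorems already established for positively shadowable chain-recurrent points, namely Theorem \ref{nonminimal} or Theorem \ref{senshors}. First I would use Theorem \ref{unifsupp} to conclude that $\su(\mu)\subset Sh^+(f)$. Consider the finite measure $\nu$ on $M$ defined by $\nu(A)=\mu(A\cap H(x_0))$ where $x_0\in CR(f)$ is any point with $\mu(H(x_0))>0$. Since $\nu$ is a nonzero Borel measure on the compact space $M$, its support is nonempty and contained in $H(x_0)$. Pick any $y\in \su(\nu)$; then $y\in H(x_0)\cap \su(\mu)\subset CR(f)\cap Sh^+(f)$, and by Theorem \ref{ShadClass}, the entire class $H(x_0)=H(y)$ is contained in $Sh^+(f)$. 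Moreover, by choice of $y$, for every $\delta>0$ one has $\mu(B_\delta(y)\cap H(x_0))>0$.

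Next, split into two cases. If $H(x_0)$ is not minimal, Theorem \ref{nonminimal} immediately delivers a semi-horseshoe. Suppose instead that $H(x_0)$ is minimal. Since $\mu$ is expansive it is non-atomic (as noted in the paper's remark following Definition of expansive measure), so the condition $\mu(H(x_0))>0$ forces $H(x_0)$ to be uncountable. In particular, $H(x_0)$ cannot be a finite (periodic) orbit, and since in a minimal system either every orbit is periodic of the same period or no orbit is periodic, $y$ is non-periodic. It then remains to verify that $f|_{H(x_0)}$ is not equicontinuous, after which Theorem \ref{senshors} applies.

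The main obstacle is precisely this last verification, which is where the expansiveness of $\mu$ enters in an essential way. I would argue by contradiction. Suppose $f|_{H(x_0)}$ is equicontinuous. As in the proof of Theorem \ref{entrcexp}, invoke \cite[Theorem~2.42]{Kurka} to obtain a metric $d'$ on $H(x_0)$, compatible with the topology, with respect to which $f|_{H(x_0)}$ is an isometry. By uniform equivalence of compatible metrics on a compact space, given the expansivity constant $e$ of $\mu$ one can find $\delta>0$ such that $d(x,z)<\delta$ and $x,z\in H(x_0)$ imply $d'(x,z)$ is small enough that $d(f^n(x),f^n(z))\leq e$ for every $n\in\Z$ (here the fact that an isometry is bi-equicontinuous is used). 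Consequently, $B_\delta(y)\cap H(x_0)\subset B^\infty_e(y)$. Combined with $\mu(B_\delta(y)\cap H(x_0))>0$ obtained in the first step, this gives $\mu(B^\infty_e(y))>0$, contradicting the expansiveness of $\mu$.

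With equicontinuity ruled out, Theorem \ref{senshors} applies to $y$ and produces a semi-horseshoe; the positivity of topological entropy follows, as remarked after the definition of semi-horseshoes. The only delicate points in this plan are (i) correctly passing from one-sided equicontinuity (as defined in the paper) to two-sided control via the Kurka-isometry reduction, and (ii) ensuring the point $y$ is chosen so that every $\delta$-neighborhood of it has positive $\mu$-mass inside $H(x_0)$, which is why I work with the support of the restricted measure $\nu$ rather than that of $\mu$ itself.
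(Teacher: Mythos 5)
Your proposal is correct and follows essentially the same route as the paper: restrict $\mu$ to the chain class to get a positively shadowable measure $\nu$ supported in $H(x_0)$, pick $y\in\su(\nu)$, split on minimality of $H(x_0)$, and in the minimal case use the expansiveness of $\mu$ together with $\nu(B_\delta(y))>0$ to rule out equicontinuity (via the Kurka isometry reduction) so that Theorem~\ref{senshors} applies. The only cosmetic difference is that the paper derives a sensitivity-type statement directly from $\nu(B_\delta(y)\cap H(x))>0$ and $\nu(B^\infty_e(y))=0$ and then points to the proof of Theorem~\ref{entrcexp}, whereas you run the equivalent contrapositive argument explicitly.
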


\begin{proof}
Let $\mu$ be an expansive positively shadowable measure; it is
non-atomic by the preceding remark. Suppose $\mu(H(x))>0$ for a
chain-recurrent class and define
\[
\nu(A)=\frac{\mu(A\cap H(x))}{\mu(H(x))}
\]
for measurable $A$. Then $\nu$ remains expansive, positively shadowable,
and non-atomic, while $\su(\nu)\subset H(x)$. In particular, $H(x)$ is
not a periodic orbit.

Next, take $y\in \su(\nu)\cap H(x)$ and let $e>0$ be the expansiveness constant of $\nu$. Since $y\in \su(\nu)$, we have $\nu(B_{\delta}(y)\cap H(x))>0$, for any $0<\delta\leq e$, in particular $B_{\delta}(y)\cap H(x)\not \subset B^{\infty}_e(y)\cap H(x)$. This implies the existence of $z,w\in B_{\delta}(y)\cap H(x)$ and $n\in \mathbb{Z}$ such that $$d(f^n(z),f^n(w))>e.$$
 Finally, Theorem \ref{unifsupp} gives $\su(\nu)\subset Sh^+(f)$. Choose $y\in\su(\nu)$. Then $y\in Sh^+(f)\cap CR(f)$ and $H(y)=H(x)$, so Theorem \ref{ShadClass} yields $H(x)\subset Sh^+(f)$. The conclusion now follows exactly as in Theorem \ref{entrcexp}, according to whether $H(x)$ is minimal or nonminimal.

\end{proof}

Combining the preceding results gives sufficient conditions for a
countably expansive homeomorphism to have a semi-horseshoe.

\begin{corollary}

Let $f$ be a countably expansive homeomorphism. Then $f$ has a semi-horseshoe if one of the following conditions is satisfied:
\begin{enumerate}
    \item There is $x\in Sh^+(f)\cap CR(f)$ such that  the chain-recurrent class of $x$ is non-periodic.
    \item There is a non-atomic positively shadowable measure giving positive measure to a chain-recurrent  class $H(x)$.
    \item $f$ has the shadowing property and possesses a non-periodic chain-recurrent point. 
\end{enumerate}
\end{corollary}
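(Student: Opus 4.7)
The approach is to reduce each of the three conditions to one of the entropy theorems already proved in Section~\ref{pointsentr}, using the countable-expansiveness of $f$ to supply the missing hypothesis in each case.

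For condition (1), I would fix $x\in Sh^+(f)\cap CR(f)$ with $H(x)$ not a periodic orbit and split into two cases. If $H(x)$ is not minimal, Theorem~\ref{nonminimal} yields a semi-horseshoe directly. If $H(x)$ is minimal, then non-periodicity of the class forces $H(x)$ to be infinite, so \emph{every} $y\in H(x)$ is non-periodic; by Theorem~\ref{ShadClass} such $y$ lies in $Sh^+(f)$, it lies in $CR(f)$ by definition of the class, and it is countable-expansive by hypothesis on $f$. Theorem~\ref{entrcexp} then produces a semi-horseshoe.

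For condition (2), the key observation is that the non-atomic positively shadowable measure $\mu$ is automatically expansive in the sense of the paper. Indeed, countable-expansiveness of $f$ gives that $B^{\infty}_e(y)$ is countable for every $y\in M$, and a non-atomic Borel probability vanishes on every countable set, so $\mu(B^{\infty}_e(y))=0$ for all $y$. Since $\mu$ is then expansive, positively shadowable, and gives positive measure to a chain-recurrent class, Theorem~\ref{entrSM} applies. For condition (3), the shadowing property implies $Sh^+(f)=M$, so every chain-recurrent point is positively shadowable; provided there exists a non-periodic chain-recurrent class (which is the only case in which the conclusion is not vacuous), we are back in condition (1).

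The most delicate point is the minimal case of (1): one must rule out equicontinuity of $f|_{H(x)}$ in order to invoke Theorem~\ref{senshors}. This, however, is exactly what Theorem~\ref{expsens} supplies from countable-expansiveness together with non-periodic recurrence, and the whole argument is already packaged inside Theorem~\ref{entrcexp}, so no new dynamical input is required. Thus the only genuinely new ingredient in the proof is the elementary measure-theoretic remark in case (2) that a non-atomic measure on a space with countable dynamical balls is expansive; the rest is bookkeeping that routes each of (1), (2), (3) to the appropriate theorem.
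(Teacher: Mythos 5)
Your proposal is correct and follows essentially the same route as the paper. The paper's proof only writes out item~(2), citing \cite[Theorem~2.1]{AC} for the fact that countable-expansiveness forces every non-atomic measure to be expansive, then invoking Theorem~\ref{entrSM}; you derive the same fact directly from countable additivity, and you spell out the routing of items~(1) and~(3) through Theorems~\ref{nonminimal}, \ref{ShadClass} and~\ref{entrcexp}, which the paper dismisses with ``clearly we just need to prove item~(2).'' One remark of substance in your write-up: the caveat you attach to item~(3) --- that one needs a non-periodic chain-recurrent class for the conclusion to hold --- is a real observation, since a single periodic orbit is countable-expansive, has shadowing, and carries no semi-horseshoe; the paper's statement and proof do not address this.
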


\begin{proof}
Item~(1) follows from Theorems~\ref{nonminimal} and~\ref{entrcexp}.
Item~(3) reduces to item~(1), because global shadowing gives
$\Sh(f)=M\subset Sh^+(f)$ and a non-periodic chain-recurrent point belongs
to a non-periodic chain class. For item~(2), \cite[Theorem~2.1]{AC}
implies that every non-atomic probability measure of a countably
expansive homeomorphism is expansive. The conclusion then follows from
Theorem~\ref{entrSM}.
\end{proof}

\subsection{Final remarks}

The preceding theory now gives intrinsic dynamical consequences on the
components constructed in Part~I, thereby connecting the realization and
consequence parts of the paper.

\begin{corollary}\label{cor:dense-consequences}
Let $M$ be a compact manifold of dimension at least three.  There is a
$C^0$-dense family of homeomorphisms $T\in\Homeo(M)$ satisfying
Theorem~\ref{thm:shadowable-dense} for which the distinguished component
$D$ has the following additional properties:
\begin{enumerate}
\item $T$ has a semi-horseshoe associated with $D$;
\item for every invariant probability measure $\mu$ supported on $D$ and
      every $\eps>0$, there is an ergodic measure $\nu$ with
      $h_\nu(T)>0$ and $d^*(\nu,\mu)<\eps$.
\end{enumerate}
\end{corollary}

\begin{proof}
In dimensions at least three, the construction in
Theorem~\ref{thm:shadowable-dense} gives
$0<h_{\rm top}(T|_D)<\infty$.  Since $T|_D$ is transitive and has positive
entropy, $D$ is not a finite periodic orbit.  Choose a transitive point
$x\in D$.  It is recurrent, $H(x)=D$, and
$x\in Sh(T)\subset Sh^+(T)$.

The restriction $T|_D$ is not equicontinuous, because an equicontinuous
homeomorphism has zero topological entropy.  If $D$ is not minimal,
Theorem~\ref{nonminimal} gives a semi-horseshoe; if $D$ is minimal,
Theorem~\ref{senshors} does so.  This proves item~(1).
Theorem~\ref{mainB}, applied to $x$, proves item~(2).
\end{proof}

\begin{remark}
In the two-dimensional model, the distinguished component is conjugate
to an irrational rotation and is therefore minimal, equicontinuous, and
of zero entropy. Together with Corollary~\ref{cor:dense-consequences},
this shows within the dense manifold realization itself that local
shadowing does not force complexity: the additional hypotheses in
Part~II distinguish precisely when symbolic and entropy phenomena follow.
\end{remark}

We conclude by indicating the corresponding forward-time formulation for
a continuous, not necessarily invertible map $f\colon M\to M$. Since a
point then has no distinguished full past, positive pointwise
shadowability is the natural hypothesis. The propagation arguments and
the consequences in Part~II that use only positive pseudo-orbits remain
valid with this formulation.  In the semi-horseshoe lemmas of
Section~\ref{SectionLocEntr}, the resulting subsystem factors onto a
one-sided full shift instead of a two-sided full shift.Thus the
forward-time results of Part~II admit continuous-map analogues; we do not
formulate them separately here.

\section{Open problems}\label{sec:open}

We close the paper with several questions which, in our opinion, delimit
the natural next steps of the theory developed here.

\begin{problem}[Two-sided propagation]\label{q:propagation}
Theorem~\ref{ShadClass} shows that $x\in Sh^+(f)\cap CR(f)$ implies
$H(x)\subset Sh^+(f)$, and the proof visibly breaks down for the
two-sided set: a two-sided pseudo-orbit through $y\in H(x)$ cannot simply
be prefixed by a chain from $x$. Suppose $x\in\Sh(f)\cap CR(f)$. Is
$H(x)\subset\Sh(f)$? A counterexample would further separate the
two pointwise notions on the level of chain classes.
\end{problem}

\begin{problem}[Positive entropy in dimension two]\label{q:dim2}
In our two-dimensional model the distinguished component is conjugate to
an irrational rotation, hence has zero entropy. Can the component $D$ in
Theorem~\ref{mainA} be chosen with positive topological entropy when
$\dim M=2$? Equivalently, is there a homeomorphism of the disc with a
transitive, positive-entropy chain component $D\subseteq\Sh(T)$ such that
no chain class near $D$ (including $D$ itself) has the shadowing property?
\end{problem}

\begin{problem}[Smooth realization]\label{q:smooth}
The constructions of Part~I are purely topological, as is the underlying
Denjoy--Rees technique. Do there exist $C^r$ diffeomorphisms, $r\ge1$,
with the properties listed in Theorem~\ref{mainA}? Already the smooth
realization of a single chain component $D\subseteq\Sh(T)$ without
intrinsic shadowing appears to require new tools.
\end{problem}

\begin{problem}[Entropy flexibility beyond entropy expansivity]\label{q:flex}
Does the exact-entropy conclusion of Theorem~\ref{mainC} hold for
asymptotically entropy-expansive maps, or, more generally, whenever
$(M,f)$ admits a principal symbolic extension in the sense of
\cite{BD}? The proof of Step~5 uses entropy expansivity only through the
vanishing of the tail entropy along fibres of the coding map, which
suggests that symbolic-extension techniques could replace it.
\end{problem}

\begin{problem}[Prescribing $\Sh(f)$ on manifolds]\label{q:realization}
Theorem~\ref{extthm} prescribes the full set of shadowable points on a
suitable compact metric space. Which pairs (a compactum $K\subset M$
together with dynamics on $K$) arise as $\Sh(f)$ for a homeomorphism $f$
of a compact manifold $M$? In particular, can $\Sh(f)$ be a prescribed
Cantor system when $f\in\Homeo(M)$ and $\dim M\ge2$?
\end{problem}

\section*{Acknowledgements}

This research was partly supported by EU funds through the Operational
Programme Johannes Amos Comenius, under project
CZ.02.01.01\slash 00\slash 23\_021\slash 0008759.

\begin{table}[h]
\begin{tabularx}{\linewidth}{p{1.5cm}  X}
\includegraphics [width=1.4cm]{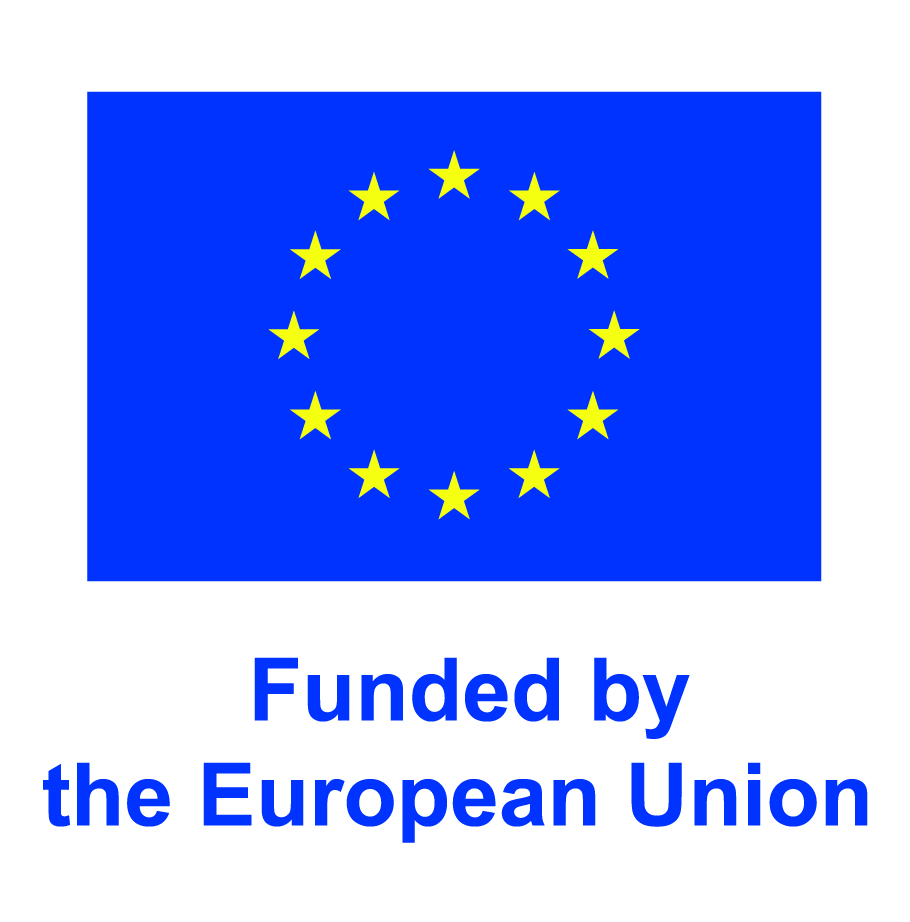} &
\vspace{-1.3cm}
This research is part of a project that has received funding from
the European Union's European Research Council Marie Sklodowska-Curie Project No. 101151716 -- TMSHADS -- HORIZON--MSCA--2023--PF--01.For the purpose of open access, and in fulfilment of the obligations arising from the grant agreement, the author has applied a Creative Commons Attribution 4.0 International (CC BY 4.0) license to any Author Accepted Manuscript version arising from
this submission.\\
\end{tabularx}
\end{table}




\end{document}